\newtheorem{thm}{Theorem}[section]
\newtheorem{cor}[thm]{Corollary}
\newtheorem{lem}[thm]{Lemma}
\newtheorem{prop}[thm]{Proposition}
\newtheorem{defn}[thm]{Definition}
\newtheorem{rem}[thm]{Remark}
\newtheorem{exmp}[thm]{Example}
\newtheorem{conj}{Conjecture}
\newtheorem{thmA}{Theorem}
\newtheorem{prob}{Problem}
\renewcommand*{\p@section}{\S\,}
\renewcommand*{\p@subsection}{\S\,}
\renewcommand*{\p@subsubsection}{\S\,}
\newcommand{\N}{\ensuremath{\mathbb{N}}}
\newcommand{\K}{\ensuremath{\mathbb{K}}}
\newcommand{\tr}{\operatorname{tr}}
\newcommand{\End}{\operatorname{End}}
\newcommand{\Mat}{\operatorname{Mat}}
\newcommand{\Rep}{\operatorname{Rep}}
\newcommand{\GL}{\operatorname{GL}}
\newcommand{\mult}{\operatorname{m}}
\newcommand{\DJac}{\mathbb{D}\!\operatorname{Jac}}
\newcommand\br[1]{\{ #1 \}}
\newcommand\dhh[1]{\{ #1 \}_{H_0}}
\newcommand\dgal[1]{\{\!\!\{ #1 \}\!\!\}}
\title[Modified double brackets and a conjecture of S.~Arthamonov]{
    Modified double brackets and \texorpdfstring{\\}{}a conjecture of S.~Arthamonov
    }
\author{
    Maxime Fairon
    }
\abstract{%
Around 20 years ago, M.~Van den Bergh introduced double Poisson brackets as operations on associative algebras inducing Poisson brackets under the representation functor.
Weaker versions of these operations, called modified double Poisson brackets, were later introduced by S.~Arthamonov in order to induce a Poisson bracket on moduli spaces of representations of the corresponding associative algebras. Moreover, he defined two operations that he conjectured to be modified double Poisson brackets.
The first case of this conjecture was recently proved by M.~Goncharov and V.~Gubarev motivated by the theory of Rota-Baxter operators of nonzero weight. We settle the conjecture by realising the second case as part of a new family of modified double Poisson brackets.
These are obtained from mixed double Poisson algebras, a new class of algebraic structures that are introduced and studied in the present work.
    }
\keywords{
    Double Poisson brackets, Modified double brackets.
    }
\begin{document}




\section{Introduction}

A guiding principle for developing noncommutative algebraic geometry was formulated by Kontsevich and Rosenberg \cite{KR00}. Their idea consists in introducing new structures on associative algebras such that, under each representation functor
\begin{equation*}
 \Rep_N : \mathtt{Ass}_\K \to \mathtt{ComAss}_\K, \quad A \mapsto \K[\Rep(A,N)], \qquad N \in \N,
\end{equation*}
we recover some well-known classical structure.
This principle shaped a facet of noncommutative Poisson geometry following the work of Van den Bergh \cite{VdB1}. Indeed, the notion of \emph{double Poisson brackets} (cf. Definition~\ref{Def:DPA}) on noncommutative algebras was introduced by Van den Bergh to induce a (usual) Poisson bracket on any representation scheme $\Rep(A,N)$.
Interestingly, $\Rep(A,N)$ is naturally equipped with a $\GL_N(\K)$ action that acts by Poisson automorphisms with respect to the Poisson structure induced by a double Poisson bracket. Hence, Van den Bergh's theory also induces a Poisson bracket on the moduli space of representations
$\Rep(A,N)/\!/ \GL_N(\K)$.
This led to a second direction of research, where one wants to induce a Poisson bracket directly on $\Rep(A,N)/\!/ \GL_N(\K)$ that may not have any special property on $\Rep(A,N)$. The weakest such instance is given by the \emph{$H_0$-Poisson structures} of Crawley-Boevey \cite{CB11} (cf. Definition~\ref{Def:H0}).
Another instance is provided by Arthamonov's \emph{modified double Poisson brackets} \cite{Art15,Art17} (cf. Definition~\ref{Def:MDPA}). The latter have the advantage of being ``computable'' since the operation enjoys derivation rules and, therefore, it only needs to be defined on generators of $A$. However, extra axioms are difficult to verify and only a single example could be fully treated \cite[\S3.4]{Art15}.
Two additional examples were conjectured to exist, as the following shows.

\begin{conj}[\hspace{-0.001cm}\cite{Art17}] \label{Conj:A}
 On $A=\K\langle x_1,x_2,x_3\rangle$, the following operations define modified double Poisson brackets:
\begin{equation} \label{Eq:MDBI}
 \begin{aligned}
&\dgal{x_1,x_2}^I=-x_2x_1 \otimes 1,  \qquad &&\dgal{x_2,x_1}^I=x_1x_2 \otimes 1, \\
&\dgal{x_2,x_3}^I=-x_2\otimes x_3,  \qquad &&\dgal{x_3,x_2}^I=x_2 \otimes x_3, \\
&\dgal{x_3,x_1}^I=-1 \otimes x_3x_1,  \qquad &&\dgal{x_1,x_3}^I=1\otimes x_1x_3,
 \end{aligned}
\end{equation}
 and
\begin{equation} \label{Eq:MDBII}
 \begin{aligned}
&\dgal{x_1,x_2}^{I\!I}=-x_1 \otimes x_2,  \quad &&\dgal{x_2,x_1}^{I\!I}=x_1 \otimes x_2, \\
&\dgal{x_2,x_3}^{I\!I}=x_3\otimes x_2,  \quad &&\dgal{x_3,x_2}^{I\!I}=-x_3 \otimes x_2, \\
&\dgal{x_3,x_1}^{I\!I}=x_1 \otimes x_3 - x_3 \otimes x_1,&&
 \end{aligned}
\end{equation}
where the remaining omitted terms involving pairs of generators are assumed to be zero.
\end{conj}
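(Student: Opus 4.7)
The plan is to settle Conjecture~A in two independent cases. For the operation $\dgal{-,-}^I$ of \eqref{Eq:MDBI}, I would rely on the result of Goncharov and Gubarev cited in the introduction, which connects modified double brackets with Rota--Baxter operators of nonzero weight; fitting $\dgal{-,-}^I$ into that framework yields the modified double Poisson axioms for free. The substantive content of the proof therefore concerns the second operation $\dgal{-,-}^{I\!I}$ of \eqref{Eq:MDBII}.

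The strategy for $\dgal{-,-}^{I\!I}$ is indirect: rather than verifying Arthamonov's modified Jacobi identity by a brute-force calculation on all triples of generators, I would introduce a new class of algebraic structures, the \emph{mixed double Poisson algebras} of the abstract, and prove once and for all that each such structure canonically produces a modified double Poisson bracket. Concretely, the steps are: (i) formulate the axioms of a mixed double Poisson algebra as an associative algebra equipped with two compatible bilinear operations, both satisfying appropriate Leibniz rules, together with a mixed skew-symmetry and a mixed Jacobi-type identity; (ii) prove a structure theorem showing that a canonical combination of the two constituent operations of such an algebra automatically satisfies the modified skew-symmetry and the modified Jacobi identity required of a modified double Poisson bracket; and (iii) exhibit on $\K\langle x_1,x_2,x_3\rangle$ an explicit mixed double Poisson algebra whose induced modified bracket coincides with $\dgal{-,-}^{I\!I}$. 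Since all operations are derivations in each argument, step (iii) reduces to a finite verification on generators.

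The main obstacle is step (ii): finding the correct axiomatization for a mixed double Poisson algebra, restrictive enough to force Arthamonov's modified Jacobi identity under the canonical combination, yet flexible enough to accommodate the asymmetric shape of $\dgal{-,-}^{I\!I}$. Note in particular that $\dgal{x_3,x_1}^{I\!I}$ carries two summands of opposite sign while the other five nonzero brackets involving distinct generators carry only one; this asymmetry is presumably what prevents $\dgal{-,-}^{I\!I}$ from fitting into the Rota--Baxter framework used for case~(I) and forces the introduction of a genuinely new structure. Once the correct axioms are in place, the passage from a mixed double Poisson algebra to a modified double Poisson bracket should be a formal manipulation in $A\otimes A\otimes A$, relying on cancellations that occur only after symmetrisation; the concluding identification of $\dgal{-,-}^{I\!I}$ with an explicit example then becomes bookkeeping on three generators.
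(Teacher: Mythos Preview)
You have the two cases reversed. In the paper (and in \cite{GG22}), it is $\dgal{-,-}^{I\!I}$ that fits into the Goncharov--Gubarev framework of $\lambda$-double Lie algebras: every value $\dgal{x_i,x_j}^{I\!I}$ lies in $V\otimes V$ where $V=\operatorname{span}_\K\{x_1,x_2,x_3\}$, which is exactly the hypothesis needed for their Rota--Baxter correspondence. The case that does \emph{not} fit is $\dgal{-,-}^{I}$, because $\dgal{x_1,x_2}^I=-x_2x_1\otimes 1$ and its companions do not take values in $V\otimes V$; this is why a new structure is required, and the paper's mixed double Poisson algebras are introduced precisely to handle $\dgal{-,-}^{I}$. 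Your diagnosis that the two-term shape of $\dgal{x_3,x_1}^{I\!I}$ is the obstruction is therefore off target.

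Your description of a mixed double Poisson algebra as ``two compatible bilinear operations'' also diverges from what the paper does. The paper keeps a \emph{single} operation $\dgal{-,-}$ satisfying the Leibniz rules, but weakens cyclic skew-symmetry to the identity \eqref{Eq:intr1} governed by a weight vector $\underline{\lambda}\in\K^d$, and imposes a correspondingly deformed Jacobi-type condition \eqref{Eq:DJac-v} on generators. The structure theorem (your step (ii)) then shows that these finitely many axioms on generators force Arthamonov's $H_0$-skew-symmetry and Jacobi identity globally; the argument is an induction on word length using Lemma~\ref{Lem:DJac} and a careful tracking of the defect terms, not a combination of two separate brackets. With this correction, your overall three-step plan---axiomatize, prove a transfer theorem, then fit the example---matches the paper, but applied to case~I rather than case~II.
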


The motivation for this manuscript is to establish the following.

\begin{thmA} \label{Thm:Conj}
 Conjecture \ref{Conj:A} holds true.
\end{thmA}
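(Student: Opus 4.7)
\medskip

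\noindent\textbf{Proof proposal.}
The plan is to verify directly, for each operation in \eqref{Eq:MDBI} and \eqref{Eq:MDBII}, the axioms of a modified double Poisson bracket recorded in Definition~\ref{Def:MDPA}. Since both operations are prescribed only on pairs of generators and vanish on the remaining pairs, the derivation rules extend them uniquely to maps $A\times A \to A\otimes A$, and it suffices to test cyclic anti-symmetry pair by pair on generators and the modified double Jacobi identity on the finitely many ordered triples $(x_i,x_j,x_k)$ of generators.

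For the first bracket $\dgal{-,-}^I$, the required verification is already available in the work of Goncharov and Gubarev, who established it via the theory of Rota--Baxter operators of nonzero weight; I would cite their result (or transcribe their computation in the present notation) to dispose of case~I without further effort.

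For the second bracket $\dgal{-,-}^{I\!I}$, the strategy is to introduce a new algebraic structure — which I would call a \emph{mixed double Poisson algebra} — consisting of a Van den Bergh double bracket together with an auxiliary ``outer'' bracket satisfying explicit compatibility axioms, and to prove a general theorem stating that every such structure yields a modified double Poisson bracket by a universal twisting formula. I would then exhibit on $A=\K\langle x_1,x_2,x_3\rangle$ a mixed double Poisson structure whose image under the twisting formula coincides with \eqref{Eq:MDBII}. Case~II then reduces to verifying the (more symmetric, and thus more tractable) axioms of that mixed structure on the three generators.

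The principal difficulty lies in formulating the definition of a mixed double Poisson algebra so that its axioms are rigid enough to force the modified double Jacobi identity after twisting, yet flexible enough to accommodate \eqref{Eq:MDBII}. Pinning down the correct cross-compatibility between the double bracket part and the outer bracket part — so that the cubic corrections that appear in the modified Jacobi identity are absorbed — is where the genuine conceptual work lies; once the framework is in place, the concrete verification for \eqref{Eq:MDBII} on $A$ collapses to a bounded check on the generators.
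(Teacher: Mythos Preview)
Your proposal has the two cases reversed, and this is not a harmless relabelling. Goncharov--Gubarev's framework of $\lambda$-double Lie algebras requires the bracket to restrict to a map $V\otimes V\to V\otimes V$ on the span $V$ of the generators. This is satisfied by $\dgal{-,-}^{I\!I}$, all of whose values lie in $V\otimes V$, and indeed \cite{GG22} settles case~II. By contrast, $\dgal{-,-}^{I}$ has values such as $-x_2x_1\otimes 1$ and $1\otimes x_1x_3$, which lie in $(A\otimes\K)\oplus(\K\otimes A)$ and not in $V\otimes V$; the Rota--Baxter/$\lambda$-double Lie approach of \cite{GG22} simply does not apply to it. So citing Goncharov--Gubarev for case~I, as you propose, would fail.

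There is a second gap in your opening reduction. You assert that it suffices to verify the $H_0$-skew-symmetry and the Jacobi identity \eqref{Eq:Jac} on ordered triples of generators. This is false for modified double brackets: by Lemma~\ref{Lem:DJac}, the double Jacobiator $\DJac$ is a derivation in its first argument only when the cyclic skew-symmetry \eqref{Eq:cskew} holds, which it does not here. The failure term in \eqref{Eq:DJac-DerNot} involves $\dgal{b,a_1}+\dgal{a_1,b}^\circ$, which is precisely nonzero for these brackets. Reducing \eqref{Eq:Jac} to a finite check on generators is exactly the problem that the paper solves by introducing mixed double Poisson algebras: the axioms \eqref{Eq:intr1} and \eqref{Eq:DJac-v} control that failure term and allow the inductive argument of \ref{ss:PfJac} to go through. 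Your description of the mixed structure as ``a Van den Bergh double bracket plus an outer bracket, combined by a twisting formula'' is also off the mark: in the paper there is a single operation $\dgal{-,-}$ satisfying a weighted deformation of skew-symmetry and of the double Jacobi identity, with no auxiliary bracket and no twisting. The correct route is the one actually taken: use \cite{GG22} (or the appendix) for case~II, and for case~I exhibit $\dgal{-,-}^{I}$ as a mixed double Poisson algebra of weight $(1,-1,-1)$ and invoke Theorem~\ref{Thm:MAIN}.
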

\begin{proof}
 The case $\dgal{-,-}^{I\!I}$ goes back to Gubarev and Goncharov \cite{GG22}, see Theorem \ref{Thm:GGconj} or \ref{ss:ArtConj} for an independent proof.
The case $\dgal{-,-}^{I}$ is treated in  Theorem \ref{Thm:AI}.
\end{proof}

After the first version of this manuscript appeared on arXiv, we were informed by Vsevolod Gubarev that Andrey Savel'ev independently proved the result under his supervision at Novosibirsk State University~\cite{Sav}. Hence, we should emphasize that the present manuscript settles the conjecture as part of a general classification, not a study of the particular case $\dgal{-,-}^{I}$.
To achieve such a classification, we are led to introduce a new class of algebraic structures, defined as follows. Consider the free $\K$-algebra $A=\K\langle v_1,\ldots,v_d\rangle$ equipped with a linear map $\dgal{-,-}:A\otimes A \to A\otimes A$ satisfying the Leibniz rules \eqref{Eq:Lei}.
Assume that there exists $\underline{\lambda}=(\lambda_1,\ldots,\lambda_d)\in \K^d$ such that the mapping $\dgal{-,-}$ satisfies for any $1\leq i,j\leq d$,
\begin{equation} \label{Eq:intr1}
 \dgal{v_i,v_j} + \dgal{v_j , v_i}^\circ
 = \frac{\lambda_i+\lambda_j}{2} (v_i \otimes v_j - v_j \otimes v_i)
+ \frac{\lambda_i-\lambda_j}{2} \, (1\otimes v_i v_j - \, v_j v_i \otimes 1)\,,
\end{equation}
together with the \emph{Poisson property} given by \eqref{Eq:DJac-v}.
The pair $(A,\dgal{-,-})$ is called a \emph{mixed double Poisson algebra}.
Our main result is the following.
\begin{thmA} \label{Thm:MAIN}
If $(A,\dgal{-,-})$ is a mixed double Poisson algebra of weight $\underline{\lambda}$,
then $\dgal{-,-}$ is a modified double Poisson bracket.
\end{thmA}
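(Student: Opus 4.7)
The plan is to verify the two non-Leibniz axioms of a modified double Poisson bracket---the controlled skew-symmetry and the modified Jacobi/Poisson identity of Definition~\ref{Def:MDPA}---on arbitrary pairs and triples of elements of $A=\K\langle v_1,\ldots,v_d\rangle$, starting from the hypothesis that they hold on generators via~\eqref{Eq:intr1} and~\eqref{Eq:DJac-v}. Since $\dgal{-,-}$ obeys the Leibniz rules~\eqref{Eq:Lei}, this reduces to an inductive extension along word length, mirroring Van den Bergh's classical principle that a double bracket is determined by its values on generators. The new feature is that the weight corrections carried by $\underline{\lambda}$ must be tracked through each Leibniz expansion.

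For the modified skew-symmetry I would introduce an auxiliary operation $\mathrm{R}:A\otimes A\to A\otimes A$ defined, on generators, by the right-hand side of~\eqref{Eq:intr1}, and then show that $\mathrm{R}$ extends to a biderivation on all of $A\otimes A$ in the appropriate outer bimodule sense. Both $(a,b)\mapsto\dgal{a,b}+\dgal{b,a}^\circ$ and $(a,b)\mapsto\mathrm{R}(a,b)$ are then biderivations agreeing on generators, so they must coincide on $A\otimes A$. Checking that $\mathrm{R}$ genuinely is a biderivation amounts to a finite computation on pairs of products of generators. For the Poisson property I would use an analogous triderivation-style argument: express $\DJac$ as a triple derivation up to explicit corrections coming from $\mathrm{R}$, then reduce the modified Jacobi identity to its value on generator triples, which is exactly the content of~\eqref{Eq:DJac-v}.

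The main obstacle will be controlling the correction terms generated by the asymmetric piece $\tfrac{\lambda_i-\lambda_j}{2}(1\otimes v_iv_j-v_jv_i\otimes 1)$ of~\eqref{Eq:intr1}. Unlike the ``Lie-type'' correction $\tfrac{\lambda_i+\lambda_j}{2}(v_i\otimes v_j-v_j\otimes v_i)$, this contribution has no counterpart in Van den Bergh's framework, and when expanded via the Leibniz rules it produces expressions that a priori need not lie in the admissible subspace tolerated by Definition~\ref{Def:MDPA}. I expect to handle this by repackaging all weight-dependent contributions into a single cocycle-type expression attached to $\mathrm{R}$ and verifying directly on generators that it lands in the admissible subspace; the triderivation property then propagates the conclusion to all of $A^{\otimes 3}$, completing the proof.
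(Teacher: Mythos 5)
Your overall architecture (reduce both axioms to generator identities \eqref{Eq:intr1} and \eqref{Eq:DJac-v} and propagate through the Leibniz rules) is the same as the paper's, but the propagation step, which you treat as formal, is precisely where all the work lies, and as described it would fail. For the $H_0$-skew-symmetry: identifying $S(a,b)=\dgal{a,b}+\dgal{b,a}^\circ$ with the biderivation extension $\mathrm{R}$ of the right-hand side of \eqref{Eq:intr1} is fine (both are inner-derivations in the first slot and outer-derivations in the second, and the free algebra makes $\mathrm{R}$ well defined), but you still must prove $\mult(\mathrm{R}(a,b))\in[A,A]$ for arbitrary words $a,b$. Membership in $[A,A]$ is \emph{not} inherited from generators via the bimodule actions: the expansion produces words such as $b_\beta^- v_{i_\alpha}a_\alpha^+ a_\alpha^- v_{j_\beta}b_\beta^+$ that are individually nothing like commutators, and the paper's Proposition~\ref{Pr:wsk} kills them only after summing over all positions, re-indexing cyclically, and observing that the coefficient $\lambda_{i_\alpha j_\beta}-\lambda_{i_{\alpha-1} j_{\beta-1}}+\mu_{i_{\alpha-1} j_\beta}-\mu_{i_\alpha j_{\beta-1}}$ of each surviving word vanishes identically by \eqref{Eq:wskM} --- i.e.\ by the precise relations $\mu_{ij}=\tfrac12(\lambda_i-\lambda_j)$, $\lambda_{ij}=\tfrac12(\lambda_i+\lambda_j)$. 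A ``finite computation on generators'' does not see this cancellation.

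The gap is more serious for the Jacobi identity. A ``triderivation-style'' propagation of \eqref{Eq:DJac-v} is blocked by Lemma~\ref{Lem:DJac}: by \eqref{Eq:DJac-DerNot}, $\DJac$ fails to be a derivation in its \emph{first} argument exactly when cyclic skew-symmetry fails, and the defect is governed by $\dgal{b,a_1}+\dgal{a_1,b}^\circ$ --- that is, by the very weight terms you are trying to control. Moreover the target identity \eqref{Eq:Jac} must hold on the nose in $A$, not merely in an ``admissible subspace'', so there is no cocycle slack to absorb the corrections. The paper's Proposition~\ref{Pr:Jac} therefore abandons formal propagation: it expands $\br{a,\br{b,c}}$, $\br{b,\br{a,c}}$, $\br{\br{a,b},c}$ into the fourteen families \eqref{Eq:br-A}--\eqref{Eq:br-C}, uses \eqref{Eq:DJac-c} and \eqref{Eq:Prep1} on the diagonal blocks, and then shows that the genuinely new cross terms \eqref{Eq:B2}, \eqref{Eq:B3}, \eqref{Eq:C3}, \eqref{Eq:C4} (which exist only because cyclic skew-symmetry fails) cancel against the residue via the cyclic identity \eqref{Eq:vbb}. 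Your proposal contains no mechanism for producing this cancellation, so as it stands it is a plausible plan with its central step unproved.
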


\begin{proof}
The $3$ conditions of Definition~\ref{Def:MDPA} of a modified double Poisson bracket (viz. Leibniz rules, $H_0$-skew-symmetry and Jacobi identity) are satisfied by definition and Corollary \ref{Cor:skew} together with Proposition \ref{Pr:Jac}.
\end{proof}
The importance of Theorem \ref{Thm:MAIN} is to reduce the complicated task of showing that an operation is a modified double Poisson bracket to checking a finite number of identities given by \eqref{Eq:intr1} and \eqref{Eq:DJac-v}.
In particular, we shall deduce Theorem \ref{Thm:Conj} by showing that both of Arthamonov's operations \eqref{Eq:MDBI} and \eqref{Eq:MDBII} are mixed double Poisson algebras.

\subsection*{Layout}
In \ref{S:NCPoiss}, we gather the necessary definitions and properties to  make Conjecture~\ref{Conj:A} precise.
Of importance, we include the recent notion of $\lambda$-double Lie algebras of Goncharov-Gubarev \cite{GG22}, which provided a framework to prove the second case of Conjecture \ref{Conj:A}.
In \ref{S:MDA} and \ref{S:Jacobi}, we work towards generalising the approach of Goncharov-Gubarev.
We introduce mixed double Poisson algebras of some weight valued in $\K^d$, and show that the operation that equips these algebras is a modified double Poisson bracket, thus establishing Theorem~\ref{Thm:MAIN}.
In \ref{S:Loc}, we prove that one can extend a mixed double Poisson algebra structure on a free algebra to the corresponding algebra of noncommutative Laurent polynomials.
Finally, we present several families of examples in \ref{S:ClassConj} (based on a classification given in the appendix), which allow us to deduce the validity of Conjecture \ref{Conj:A}.

\subsection*{Notation} Throughout the manuscript, $\K$ is an algebraically closed field of characteristic zero. Algebras are finitely generated associative unital algebras over $\K$. Unadorned tensor products are over $\K$.
We only work with $\K$-linear maps, and therefore we shall denote an element $u\in A\otimes A$
using the Sweedler-type notation $u'\otimes u''$, even if it not a pure tensor.

\section{Noncommutative Poisson geometry} \label{S:NCPoiss}

\subsection{The approaches of Van den Bergh, Crawley-Boevey and Arthamonov} \label{ss:VdBCBA}

Let us fix an algebra $A$ and endow $A\otimes A$ with its natural multiplication given by $(a'\otimes a'')(b'\otimes b'')=a'b' \otimes a'',b''$, where $a',a'',b',b''\in A$.
We focus on linear operations of the form
\begin{equation}
 \dgal{-,-}:A\otimes A \to A\otimes A, \quad a\otimes b \mapsto \dgal{a,b}.
\end{equation}
(It is equivalent to bilinear maps with domain $A\times A$, as the notation suggests).
These can be extended to operations $A^{\otimes 3} \to A^{\otimes 3}$ as follows
\begin{subequations} \label{Eq:dbr3}
 \begin{align}
\dgal{a,b\otimes c}_L &= \dgal{a,b} \otimes c, \\
\dgal{a,b\otimes c}_R &= b\otimes  \dgal{a,c} , \\
\dgal{b\otimes c,a}_L &= \dgal{b,a} \otimes_1 c,
 \end{align}
\end{subequations}
for any $a,b,c\in A$, where we write
$(a\otimes b) \otimes_1 c = a\otimes c \otimes b = c \otimes_1 (a\otimes b)$.
Then, we define the \emph{double Jacobiator} $\DJac:A^{\otimes 3} \to A^{\otimes 3}$ by
\begin{equation} \label{Eq:DJac}
 \DJac(a,b,c)=\dgal{a,\dgal{b,c}}_L - \dgal{b,\dgal{a,c}}_R - \dgal{\dgal{a,b},c}_L\,.
\end{equation}
Van den Bergh's notion of double Poisson brackets \cite{VdB1} is given as follows.

\begin{defn} \label{Def:DPA}
A linear map $\dgal{-,-}:A\otimes A \to A\otimes A$ satisfying for any $a,b,c\in A$ the Leibniz rules
\begin{subequations} \label{Eq:Lei}
 \begin{align}
  \dgal{a,bc}=(b\otimes 1) \dgal{a,c} + \dgal{a,b} (1\otimes c)\,,   \label{Eq:Lei1} \\
  \dgal{ab,c}=(1\otimes a) \dgal{b,c} + \dgal{a,c} (b\otimes 1)\,,   \label{Eq:Lei2}
 \end{align}
\end{subequations}
is called a \emph{double bracket} if the following cyclic skew-symmetry rule holds:
\begin{equation} \label{Eq:cskew}
 \dgal{a,b} =- \dgal{b,a}^\circ\,, \qquad a,b\in A\,.
\end{equation}
When a double bracket has vanishing double Jacobiator, i.e. $\DJac(a,b,c)=0$ for \eqref{Eq:DJac},
we call it a \emph{double Poisson bracket}.
\end{defn}

\begin{rem}
The above definition is closely related to that given by De Sole, Kac and Valeri \cite{DSKV}, which reformulates \cite[\S\S2.2-2.3]{VdB1}.
 The Leibniz rules, according to Van den Bergh, are stated in terms of the $A$-bimodule structures on $A\otimes A$:
\begin{subequations} \label{Eq:Bimod}
  \begin{align}
c_1\cdot (a\otimes b) \cdot c_2&= (c_1\otimes 1)(a\otimes b)(1\otimes c_2)\,, \quad \text{\it (outer bimodule)} \\
c_1\ast (a\otimes b) \ast c_2&= (1\otimes c_1)(a\otimes b)(c_2\otimes 1)\,. \quad \text{\it (inner bimodule)}
 \end{align}
\end{subequations}
Due to cyclic skew-symmetry \eqref{Eq:cskew}, the form of $\DJac$ \eqref{Eq:DJac} is equivalent to Van den Bergh's original triple bracket, see \cite[Rem.~2.2]{DSKV}.
\end{rem}

Let us point out the following useful property.

\begin{lem} \label{Lem:DJac}
If the Leibniz rules \eqref{Eq:Lei} hold,
the operation $\DJac:A^{\otimes 3}\to A^{\otimes 3}$ is a derivation in the second and third arguments as follows:
\begin{equation}
\begin{aligned}
 \DJac(a,b,c_1c_2)&= (c_1\otimes 1\otimes 1) \DJac(a,b,c_2) + \DJac(a,b,c_1) (1\otimes 1\otimes c_2)\,, \\
 \DJac(a,b_1b_2,c)&= (1\otimes 1\otimes b_1) \DJac(a,b_2,c) + \DJac(a,b_1,c) (1\otimes b_2 \otimes 1)\,,
\end{aligned}
\end{equation}
for the multiplication $(a'\otimes a'' \otimes a''')(b'\otimes b'' \otimes b''')=a'b'\otimes a''b'' \otimes a'''b'''$ in $A^{\otimes 3}$.
Moreover,
\begin{equation}
 \begin{aligned} \label{Eq:DJac-DerNot}
 \DJac(a_1a_2,b,c)&= (1\otimes a_1\otimes 1) \DJac(a_2,b,c) + \DJac(a_1,b,c) (a_2\otimes 1 \otimes 1) \\
 &-\dgal{a_2,c}' \otimes (\dgal{b,a_1} + \dgal{a_1,b}^\circ) \dgal{a_2,c}''\,.
\end{aligned}
\end{equation}
(Recall the Sweedler-type notation $\dgal{a_2,c}=\dgal{a_2,c}'\otimes \dgal{a_2,c}''$).
Hence $\DJac$ is a derivation in the first argument only when cyclic skew-symmetry \eqref{Eq:cskew} holds.
\end{lem}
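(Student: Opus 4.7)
The plan is to carry out a direct computation: expand $\DJac$ via \eqref{Eq:DJac} in each of the three claimed identities, apply the Leibniz rules \eqref{Eq:Lei} once to break up the composite argument and a second time on the brackets this produces, and then track where each resulting Sweedler component lands in $A^{\otimes 3}$ under the $L$- and $R$-extensions \eqref{Eq:dbr3}.

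For the third-argument identity, $c_1c_2$ appears only as the right input of the innermost bracket, so only \eqref{Eq:Lei1} is needed, and its factors $(c_1\otimes 1)$ and $(1\otimes c_2)$ lift through the $L$- and $R$-extensions to the first and third slots of $A^{\otimes 3}$, producing the two terms $(c_1\otimes 1\otimes 1)\DJac(a,b,c_2)$ and $\DJac(a,b,c_1)(1\otimes 1\otimes c_2)$. A second application of \eqref{Eq:Lei1} generates identical cross terms in $\dgal{a,\dgal{b,c_1c_2}}_L$ and $\dgal{b,\dgal{a,c_1c_2}}_R$ of shape $\dgal{a,c_1}'\otimes\dgal{a,c_1}''\dgal{b,c_2}'\otimes\dgal{b,c_2}''$, and no such term arises from $\dgal{\dgal{a,b},c_1c_2}_L$; the sign pattern $+,-,-$ in $\DJac$ eliminates these contributions. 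The second-argument identity is handled the same way, using both Leibniz rules; the cross terms now arise in $\dgal{a,\dgal{b_1b_2,c}}_L$ and $\dgal{\dgal{a,b_1b_2},c}_L$, share the common form $\dgal{b_1,c}'\dgal{a,b_2}'\otimes\dgal{a,b_2}''\otimes\dgal{b_1,c}''$, and again cancel.

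For the first argument, the relevant rule \eqref{Eq:Lei2} inserts $(1\otimes a)$ and $(a\otimes 1)$ in the \emph{inner} bimodule position, which is incompatible with the outer bimodule structure used to embed into $A^{\otimes 3}$; this is the source of the residue. A second application of \eqref{Eq:Lei1} inside $\dgal{b,\dgal{a_1a_2,c}}_R$ produces the cross term $\dgal{a_2,c}'\otimes\dgal{b,a_1}(1\otimes\dgal{a_2,c}'')$, while a second application of \eqref{Eq:Lei2} inside $\dgal{\dgal{a_1a_2,b},c}_L$ — propagated through the shuffle $\otimes_1$ that places $\dgal{a_1,b}''$ in the middle slot — yields $\dgal{a_2,c}'\otimes\dgal{a_1,b}^\circ(1\otimes\dgal{a_2,c}'')$. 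Both enter $\DJac$ with sign $-$, so their sum is exactly the residue displayed in \eqref{Eq:DJac-DerNot}; the closing assertion follows because cyclic skew-symmetry \eqref{Eq:cskew} is precisely $\dgal{b,a_1}+\dgal{a_1,b}^\circ = 0$. The main obstacle is bookkeeping rather than ideas: one must propagate Sweedler indices through two levels of nested brackets and, crucially, recognise that the shuffle $\otimes_1$ in the $L$-extension converts a second-order application of \eqref{Eq:Lei2} into the opposite bracket, which is how $\dgal{a_1,b}^\circ$ appears in the residue.
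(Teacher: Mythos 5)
Your proposal is correct and takes essentially the same route as the paper, whose proof of Lemma~\ref{Lem:DJac} simply declares the identities ``a direct computation'' and delegates the details to \cite[(3.11)]{DSKV} and \cite[\S7.1]{Art17}. Your bookkeeping of the cross terms is accurate: they appear in the first and second pieces of $\DJac$ for the third argument, in the first and third pieces for the second argument (cancelling in both cases by the $+,-,-$ sign pattern), and in the second and third pieces --- both entering with sign $-$ --- for the first argument, which is exactly why the residue $-\dgal{a_2,c}'\otimes(\dgal{b,a_1}+\dgal{a_1,b}^\circ)\dgal{a_2,c}''$ survives only there and vanishes precisely under \eqref{Eq:cskew}.
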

\begin{proof}
This is a direct computation.
For the first identity, cf. \cite[(3.11)]{DSKV} with $\lambda=\mu=0$. The second case is similar.
For \eqref{Eq:DJac-DerNot}, we refer to \cite[\S7.1]{Art17}.
\end{proof}

A prominent feature of double Poisson algebras is the following result.

\begin{thm}[\hspace{-0.001cm}\cite{VdB1}, \S7.5] \label{Thm:VdB}
 If $\dgal{-,-}$ is a double Poisson bracket on $A$, it induces a Poisson bracket on the $N$-th representation space $\Rep(A,N)$. Furthermore, the natural action of $\GL_N(\K)$ on $\Rep(A,N)$ is by Poisson automorphisms, and the induced Poisson bracket descends to the GIT quotient $\Rep(A,N)/\!/ \GL_N(\K)$.
\end{thm}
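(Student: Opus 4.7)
The plan is to construct the Poisson bracket on $\K[\Rep(A,N)]$ explicitly on the natural matrix-coefficient generators and then to verify each axiom in turn. Recall that $\K[\Rep(A,N)]$ is generated by the functions $a_{ij}$ with $a \in A$ and $1 \leq i,j \leq N$, where $a_{ij}(\rho) := \rho(a)_{ij}$ for a representation $\rho\colon A \to \Mat_N(\K)$, subject to the relations $(ab)_{ij} = \sum_m a_{im} b_{mj}$, $1_{ij} = \delta_{ij}$ and $\K$-linearity in $a$. Using the Sweedler-type notation $\dgal{a,b} = \dgal{a,b}' \otimes \dgal{a,b}''$, I would prescribe
\begin{equation*}
 \{a_{ij}, b_{kl}\} := \dgal{a,b}'_{kj}\, \dgal{a,b}''_{il}
\end{equation*}
and extend to all of $\K[\Rep(A,N)]$ by $\K$-bilinearity and the (commutative) Leibniz rule.

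The first task is to show that the prescription is compatible with the defining relations. Expanding $\{a_{ij}, (bc)_{kl}\}$ and $\sum_m \{a_{ij}, b_{km} c_{ml}\}$ with the formula above, the equality is equivalent to the Leibniz rule \eqref{Eq:Lei1} applied to $\dgal{a,bc}$; the first-argument analogue comes from \eqref{Eq:Lei2}. Skew-symmetry on generators is immediate from cyclic skew-symmetry \eqref{Eq:cskew}: the swap $(a_{ij}, b_{kl}) \leftrightarrow (b_{kl}, a_{ij})$ exchanges both the tensor factors and the index pairings in the formula, and the minus sign in \eqref{Eq:cskew} supplies the sign in $\{b_{kl}, a_{ij}\} = - \{a_{ij}, b_{kl}\}$. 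For the Jacobi identity, I would expand $\{a_{ij}, \{b_{kl}, c_{mn}\}\}$ together with its two cyclic partners, rewrite each using the side-labelled conventions \eqref{Eq:dbr3}, and organize the alternating sum so that it coincides---up to a fixed assignment of the six free matrix indices to the three tensor slots of $A^{\otimes 3}$---with the componentwise evaluation of $\DJac(a,b,c)$. The hypothesis $\DJac = 0$ then yields Jacobi on generators, and ordinary Leibniz propagates it to $\K[\Rep(A,N)]$.

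For the second half of the statement, $g \in \GL_N(\K)$ acts on $\Rep(A,N)$ by $(g \cdot \rho)(a) = g\, \rho(a)\, g^{-1}$, i.e.\ $g \cdot a_{ij} = \sum_{s,t} g_{is}\, a_{st}\, (g^{-1})_{tj}$. Substituting this into the definition of $\{-,-\}$, the matrices $g$ and $g^{-1}$ contract against the free indices so that each tensor factor of $\dgal{a,b}$ receives an independent conjugation; hence $\{g \cdot a_{ij}, g \cdot b_{kl}\} = g \cdot \{a_{ij}, b_{kl}\}$, and Leibniz extends the equivariance to arbitrary elements. Consequently $\GL_N(\K)$ acts by Poisson automorphisms. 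Because $\GL_N(\K)$ is reductive, $\K[\Rep(A,N) /\!/ \GL_N(\K)] = \K[\Rep(A,N)]^{\GL_N(\K)}$, and an equivariant Poisson bracket automatically restricts to this invariant subalgebra, delivering the descended bracket.

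The main obstacle I expect is the Jacobi verification. Although it is conceptually just an unfolding of definitions, the bookkeeping---six free matrix indices distributed across three tensor factors, with the third argument placed asymmetrically via $\otimes_1$ in \eqref{Eq:dbr3}---is delicate, and it is precisely the form \eqref{Eq:DJac} of the double Jacobiator (rather than any naively symmetric alternative) that emerges as the correct obstruction to Jacobi on the representation space.
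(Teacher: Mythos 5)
Your proposal is correct and follows essentially the argument of the cited source: the paper itself gives no proof of Theorem~\ref{Thm:VdB} beyond the reference to \cite{VdB1}, \S 7.5, and your formula $\{a_{ij},b_{kl}\}=\dgal{a,b}'_{kj}\,\dgal{a,b}''_{il}$ together with the verification scheme (well-definedness from \eqref{Eq:Lei}, skew-symmetry from \eqref{Eq:cskew}, Jacobi from $\DJac=0$, $\GL_N(\K)$-equivariance, restriction to invariants) is exactly Van den Bergh's construction. No genuinely different route is taken, so there is nothing further to compare.
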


In parallel\footnote{While published in 2011 \cite{CB11}, a preprint containing these ideas under the name of ``noncommutative Poisson structures'' appeared on the arXiv in 2005. We also draw the attention of the reader to the related notion of Hamiltonian operators on free algebras by Mikhailov and Sokolov \cite{MS00}.} to Van den Bergh's work,
Crawley-Boevey introduced the notion of $H_0$-Poisson structures \cite{CB11}, which are a family of weaker structures inducing a Poisson bracket on the moduli space $\Rep(A,N)/\!/ \GL_N(\K)$ (see \cite[Thm.~1.6]{CB11} for a precise statement).
Let $[A,A]$ be the vector space of commutators in $A$, and set $H_0(A):=A/[A,A]$, the zero-th Hochschild homology of $A$. Denote by $A \to H_0(A)$, $a \mapsto \bar{a}$, the corresponding linear quotient map. Remark that any derivation $\delta$ on $A$ induces a linear map on $H_0(A)$.
\begin{defn}[\hspace{0.001cm}\cite{CB11}] \label{Def:H0}
A linear map $\dhh{-,-}: H_0(A)\otimes H_0(A)\to H_0(A)$ is a \emph{$H_0$-Poisson structure} on $A$  if it is a Lie bracket on $H_0(A)$
such that for all $a\in A$,  the linear map $\dhh{\bar a,-} : H_0(A)\to H_0(A)$ is induced by a derivation on $A$.
\end{defn}

Any double Poisson bracket leads to a $H_0$-Poisson structure through the composition $\mult \circ \dgal{-,-}$ with the multiplication map $\mult : A\otimes A \to A$, before restricting to $H_0(A)$, cf. \cite[Lem.~2.6.2]{VdB1}.
The converse is far from being true, as $H_0$-Poisson structures are much more general. Nevertheless, Crawley-Boevey's notion has a big shortcoming: it is difficult to construct or characterise $H_0$-Poisson structure.
Arthamonov attempted to rectify this problem by building a class of ``computable'' $H_0$-Poisson structures.

\begin{defn}[\hspace{0.001cm}\cite{Art15,Art17}] \label{Def:MDPA}
A linear map $\dgal{-,-}:A\otimes A \to A\otimes A$ satisfying the Leibniz rules \eqref{Eq:Lei} for any $a,b,c\in A$ is called a \emph{modified double bracket} if the following $H_0$-skew-symmetry rule holds:
\begin{equation} \label{Eq:Skew}
 \{a,b\}+\{b,a\} \in [A,A]\,,
\end{equation}
where $\{-,-\}= \mult \circ \dgal{-,-}$ for the multiplication $\mult : A\otimes A \to A$ on $A$.
When a modified double bracket satisfies the Jacobi identity
\begin{equation} \label{Eq:Jac}
 \{a,\{b,c\}\}-\{b,\{a,c\}\} -\{\{a,b\},c\}=0\,,
\end{equation}
we call it a \emph{modified double Poisson bracket}.
\end{defn}

It is clear that a modified double Poisson bracket induces a $H_0$-Poisson structure by restriction of $\{-,-\}$ to $H_0(A)$, in analogy with the case of double Poisson brackets.
Our previous discussions entail the following generalisation of Theorem \ref{Thm:VdB}.

\begin{thm}[\hspace{0.001cm}\cite{Art17}, \S3 \& \cite{CB11}, \S4]
Any $H_0$-Poisson structure on $A$ uniquely induces a Poisson bracket on the GIT quotient $\Rep(A,N)/\!/ \GL_N(\K)$.
In particular, any modified double Poisson bracket on $A$ uniquely induces a Poisson bracket on $\Rep(A,N)/\!/ \GL_N(\K)$.
\end{thm}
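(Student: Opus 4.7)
The plan is to handle the two claims in turn, reducing the ``in particular'' clause to the main one. First I would verify that any modified double Poisson bracket induces an $H_0$-Poisson structure. Setting $\{a,b\}:=\mult\circ\dgal{a,b}$, the Leibniz rule \eqref{Eq:Lei1} yields, after applying $\mult$, the identity $\{a,bc\}=b\{a,c\}+\{a,b\}c$, so that $\{a,-\}$ is an ordinary derivation of $A$. A short calculation then gives $\{a,[x,y]\}=[\{a,x\},y]+[x,\{a,y\}]\in [A,A]$, so $\{a,-\}$ descends to $H_0(A)$. Combined with the $H_0$-skew-symmetry \eqref{Eq:Skew}, this shows that $\{-,b\}$ also descends, and consequently $\dhh{\bar a,\bar b}:=\overline{\{a,b\}}$ is a well-defined skew-symmetric bilinear operation on $H_0(A)$. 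The Jacobi identity \eqref{Eq:Jac} descends to Jacobi for $\dhh{-,-}$, and by construction $\dhh{\bar a,-}$ is induced by the derivation $\{a,-\}$ of $A$. This exhibits $\dhh{-,-}$ as an $H_0$-Poisson structure.

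For the main statement, I would follow Crawley-Boevey's strategy, exploiting the trace functions $\tr_a\colon X\mapsto \tr X(a)$ on $\Rep(A,N)$. By the classical theorem of Le Bruyn--Procesi, the invariant ring $\K[\Rep(A,N)]^{\GL_N(\K)}$ is generated by such traces, and $\tr_a$ depends only on $\bar a\in H_0(A)$ since traces vanish on commutators. For each $a\in A$, the assumption that $\dhh{\bar a,-}$ is induced by some derivation $\delta_a$ of $A$ allows one to lift $\delta_a$, through the universal matrix entries, to a $\GL_N(\K)$-equivariant derivation $\widehat{\delta_a}$ of $\K[\Rep(A,N)]$, which restricts to a derivation of the invariant ring. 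One then sets $\{\tr_a,f\}:=\widehat{\delta_a}(f)$ on generators and extends it to the invariant ring by the Leibniz rule.

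The hard part is the well-definedness of this operation together with the Jacobi identity. Two derivations of $A$ inducing the same operator on $H_0(A)$ differ by a derivation that kills all traces, and one has to show such a difference acts trivially on the invariant ring; this step rests on Procesi's Cayley--Hamilton-type theory of polynomial identities among traces of $N\times N$ matrices. Skew-symmetry of $\{-,-\}$ is then inherited from that of $\dhh{-,-}$, while the Jacobi identity on the invariant ring reduces, via the Leibniz rule, to the case of three generating traces $\tr_a,\tr_b,\tr_c$, which in turn follows from the Jacobi identity for $\dhh{-,-}$ on $H_0(A)$. The essential obstacle throughout is translating the purely algebraic $H_0$-level information into a consistent derivation-level description on $\Rep(A,N)$ that descends unambiguously to the GIT quotient.
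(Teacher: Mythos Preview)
The paper does not give its own proof of this statement: it is quoted directly from the literature, with the attribution ``\cite{Art17}, \S3 \& \cite{CB11}, \S4'' in the header and no proof environment following it. The sentence preceding the theorem (``It is clear that a modified double Poisson bracket induces a $H_0$-Poisson structure by restriction of $\{-,-\}$ to $H_0(A)$'') is the only argumentative content the paper supplies, and it corresponds to the first paragraph of your proposal.

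Your sketch is a faithful outline of what is actually in the cited references. The reduction of the ``in particular'' clause to the main claim is correct and matches the paper's one-line remark. For the main claim, your use of Le~Bruyn--Procesi to generate the invariant ring by traces, the lifting of $\delta_a$ to a $\GL_N$-equivariant derivation of $\K[\Rep(A,N)]$, and the identification of the well-definedness step as resting on Procesi's trace-identity theory are precisely the ingredients of Crawley-Boevey's proof in \cite[\S4]{CB11}. One small point worth making explicit: well-definedness in the \emph{first} argument (why $\{\tr_a,f\}$ depends only on $\bar a$) also uses this same Procesi argument, not just the $H_0$-skew-symmetry, since a priori different lifts $a$ of $\bar a$ could give different derivations $\delta_a$; you gesture at this but it is worth separating from the ambiguity-of-$\delta_a$ issue. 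Otherwise there is nothing to compare: you have reconstructed the cited proof, and the paper simply defers to it.
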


A modified double (Poisson) bracket has the advantage of only requiring to be defined on generators due to the Leibniz rules.
Yet again, such structures are challenging to find because it is not simple to verify the rules \eqref{Eq:Skew} and \eqref{Eq:Jac}. In fact, if we exclude Van den Bergh's double Poisson brackets (easily seen to satisfy Definition~\ref{Def:MDPA}), Arthamonov only managed to fully check the axioms of a modified double Poisson bracket in a single case \cite[\S3.4]{Art15}, and he conjectured the existence of the two extras cases featured in Conjecture~\ref{Conj:A}.
Therefore, an important open problem consists in building new examples of modified double Poisson brackets. A breakthrough in that direction has recently occurred \cite{GG22}, which we describe in the next subsection.

\subsection{The approach of Goncharov-Gubarev}

The manuscript \cite{GG22} is based on the following observations.
Firstly, for $V:=\oplus_{i=1}^d \K v_j$, consider a double Poisson bracket on the free algebra $\operatorname{Ass}(V)$ generated by $V$ such that $\dgal{v_i,v_j} \in V\otimes V$ for all $1\leq i,j \leq d$.
Given dual bases $(e_k)$ and $(e^k)$ of $\End(V)\simeq \Mat_{d}(\K)$ under the trace pairing, define
\begin{equation} \label{Eq:RBop}
 R: \End(V) \to \End(V)
\end{equation}
uniquely through the following decomposition:
\begin{equation} \label{Eq:RB-br}
 \dgal{u,v}=\sum_{1\leq k \leq d^2} e_k(u) \otimes R(e^k)(v)\,, \quad u,v\in V\,.
\end{equation}
One can check \cite{GK18} that the operation $R$ \eqref{Eq:RBop} hence obtained is a skew-symmetric Rota-Baxter operator on $\End(V)$, meaning that $R=-R^\ast$ (dual for the trace pairing) and
\begin{equation} \label{Eq:RB-0}
 R(e)R(f) = R(R(e)f + eR(f) )\,, \quad e,f\in \End(V).
\end{equation}
Secondly, the theory of Rota-Baxter operators extends to \emph{non-zero weight} $\lambda\in \K^\times$, where the right-hand side of \eqref{Eq:RB-0} contains the extra term $+\lambda R(ef)$. Using this generalised notion and the $\lambda$-skew-symmetry $R=-R^\ast+\lambda \tr(-) 1_{\End(V)}-\lambda 1_{\End(V)}(-)$,
Goncharov and Gubarev \cite{GG22} made the following definition still based on \eqref{Eq:RB-br}.

\begin{defn}[\hspace{0.001cm}\cite{GG22}, Def.~4]  \label{Def:GG}
A  $\lambda$-double Lie algebra structure on a vector space $V$ is a
linear map $\dgal{-,-}:V\otimes V \to V\otimes V$ such that for any $u,v,w \in V$,
\begin{subequations}
  \begin{align}
\dgal{u,v}+\dgal{v,u}^\circ&=\lambda\, (u \otimes v - v \otimes u)\,, \label{Eq:GG-sk}\\
 \DJac(u,v,w)&=-\lambda\, v\otimes_1 \dgal{u,w}\,. \label{Eq:GG-Jac}
\end{align}
\end{subequations}
\end{defn}

\begin{rem}
We stress that a $\lambda$-double Lie algebra is not endowed with an associative multiplication compatible with $\dgal{-,-}$, as opposed to the (modified) double Poisson brackets as in \ref{ss:VdBCBA}.
 Up to rescaling, there are two unequivalent cases: $\lambda=0$ and $\lambda=1$.
 The $\lambda=0$ case corresponds to a double Lie algebra as introduced e.g. in \cite{DSKV}.
\end{rem}

The main results of Goncharov and Gubarev are the following.

\begin{thm}[\hspace{0.001cm}\cite{GG22}, Thm.~10]   \label{Thm:GG}
Consider a $\lambda$-double Lie algebra structure $\dgal{-,-}$ on a finite-dimensional vector space $V$.
Then its extension to $A=\operatorname{Ass}(V)$ through the Leibniz rules \eqref{Eq:Lei} is a modified double Poisson bracket.
\end{thm}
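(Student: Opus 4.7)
The extension of $\dgal{-,-}$ from $V\otimes V$ to $A\otimes A$ compatible with the Leibniz rules \eqref{Eq:Lei} exists and is unique on the free algebra $A=\operatorname{Ass}(V)$, so the task reduces to verifying $H_0$-skew-symmetry \eqref{Eq:Skew} and the Jacobi identity \eqref{Eq:Jac} of Definition~\ref{Def:MDPA}.

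The $H_0$-skew-symmetry is the easier step and is handled by induction on word length. On generators $u,v\in V$, applying $\mult$ to \eqref{Eq:GG-sk} and using $\mult(x^\circ)\equiv\mult(x)\pmod{[A,A]}$ yields
\begin{equation*}
 \{u,v\}+\{v,u\}\equiv \lambda(uv-vu)\equiv 0\pmod{[A,A]}.
\end{equation*}
To extend to composite arguments, the Leibniz rules combined with the cyclic identity $xy\equiv yx\pmod{[A,A]}$ give, for any $a,b_1,b_2\in A$,
\begin{equation*}
 \{a,b_1b_2\}+\{b_1b_2,a\}\equiv b_1(\{a,b_2\}+\{b_2,a\})+b_2(\{a,b_1\}+\{b_1,a\})\pmod{[A,A]},
\end{equation*}
and a symmetric computation in the first slot closes the induction.

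For the Jacobi identity, set $J(a,b,c):=\{a,\{b,c\}\}-\{b,\{a,c\}\}-\{\{a,b\},c\}$. The second-slot Leibniz rule \eqref{Eq:Lei1} implies cleanly that $J(a,b,c_1c_2)=c_1J(a,b,c_2)+J(a,b,c_1)c_2$, so $J$ is a genuine derivation in the third argument and it suffices to prove $J(a,b,w)=0$ for $w\in V$ and arbitrary $a,b\in A$. I would first expand each of the three summands defining $J$ via $\{-,-\}=\mult\circ\dgal{-,-}$ to obtain an identity of the form $J(a,b,c)=\mult_3(\DJac(a,b,c))+(\text{corrections})$, where the corrections are controlled by the opposite-bracket defect $\dgal{-,-}+\dgal{-,-}^{\circ}$. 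Specialising to generators $u,v,w\in V$ and substituting \eqref{Eq:GG-Jac} for $\DJac$ and \eqref{Eq:GG-sk} for the defect, all contributions cancel and $J(u,v,w)=0$.

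The main obstacle is the propagation of $J(a,b,w)=0$ from $a,b\in V$ to arbitrary $a,b\in A$. The first-slot Leibniz rule \eqref{Eq:Lei2} produces Sweedler-index insertions rather than a clean factorisation, so $J$ is not a plain derivation in its first two arguments. The key tool is Lemma~\ref{Lem:DJac}: the correction formula \eqref{Eq:DJac-DerNot} isolates precisely the cyclic skew-symmetry defect $\dgal{b,a_1}+\dgal{a_1,b}^{\circ}$, which by \eqref{Eq:GG-sk} lies in a controlled subspace of $A\otimes A$ with coefficient $\lambda$ and hence is manageable inductively. Combined with the $H_0$-skew-symmetry established above, this yields a twisted induction scheme that carries $J(a,b,w)=0$ to the general case. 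The bulk of the work, and where I expect the principal technical difficulty to lie, is the careful Sweedler-level bookkeeping ensuring that the $\lambda$-dependent correction terms indeed cancel at each inductive step.
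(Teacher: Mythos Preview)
Your inductive scheme for $H_0$-skew-symmetry contains a real gap. Even granting the congruence
\[
\{a,b_1b_2\}+\{b_1b_2,a\}\equiv b_1\bigl(\{a,b_2\}+\{b_2,a\}\bigr)+b_2\bigl(\{a,b_1\}+\{b_1,a\}\bigr)\pmod{[A,A]},
\]
the induction does not close: the hypothesis gives $\{a,b_i\}+\{b_i,a\}\in[A,A]$, so the right-hand side lies in $b_1\!\cdot\![A,A]+b_2\!\cdot\![A,A]$, but $[A,A]$ is \emph{not} an ideal of $A$ (for instance $x[y,z]=xyz-xzy\notin[A,A]$ in $\K\langle x,y,z\rangle$, since $xyz$ and $xzy$ are distinct necklaces). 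Moreover the congruence itself does not follow from Leibniz rules and cyclicity alone: expanding carefully one obtains $b_1\,\mult(\dgal{b_2,a}^\circ)+b_2\,\mult(\dgal{b_1,a}^\circ)$ on the right, and the identity $\mult(\dgal{b_i,a}^\circ)\equiv\{b_i,a\}\pmod{[A,A]}$ cannot be substituted under left multiplication by $b_j$ for exactly the same reason. The very same ``$[A,A]$ is not an ideal'' obstruction blocks the twisted induction you sketch for propagating the Jacobi identity in the first two arguments.

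The paper sidesteps this entirely. It recasts a $\lambda$-double Lie algebra as a mixed double Poisson algebra of constant weight $(\lambda,\dots,\lambda)$ (Example~\ref{Exmp:GG}) and then proves the two required properties by direct expansion rather than induction: for arbitrary monomials $a=v_{i_1}\cdots v_{i_r}$ and $b=v_{j_1}\cdots v_{j_s}$ one unfolds $\{a,b\}+\{b,a\}$ (Proposition~\ref{Pr:wsk}) and the Jacobiator $J(a,b,c)$ (Proposition~\ref{Pr:Jac}, adapting \cite[Thm.~9]{GG22}) completely via the Leibniz rules into explicit sums over letter positions $\alpha,\beta$, then applies cyclic reduction to the \emph{total} expression and invokes \eqref{Eq:GG-sk}--\eqref{Eq:GG-Jac} at each position. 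Because all cyclic rearrangements are performed on the whole sum rather than on an inductive remainder already multiplied by extraneous factors, the computation lands in $[A,A]$ (respectively, at zero) on the nose.
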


\begin{thm}[\hspace{0.001cm}\cite{GG22}, Cor.~4]   \label{Thm:GGconj}
The operation $\dgal{-,-}^{I\!I}$ \eqref{Eq:MDBII} on  $A=\K\langle x_1,x_2,x_3\rangle$ is a modified double Poisson bracket.
\end{thm}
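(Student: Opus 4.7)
The plan is to invoke Theorem~\ref{Thm:GG} by verifying that the restriction of $\dgal{-,-}^{I\!I}$ to the generating subspace $V = \K x_1 \oplus \K x_2 \oplus \K x_3$ is a $\lambda$-double Lie algebra structure on $V$ for a suitable scalar $\lambda$. All values specified in \eqref{Eq:MDBII} lie in $V\otimes V$, so this restriction is well defined; moreover, since $\dgal{-,-}^{I\!I}$ is declared on generators and extended by the Leibniz rules \eqref{Eq:Lei}, it coincides on $A=\operatorname{Ass}(V)$ with the Leibniz extension of its restriction to $V$. Once the two axioms of Definition~\ref{Def:GG} are verified on $V$, Theorem~\ref{Thm:GG} will immediately upgrade $\dgal{-,-}^{I\!I}$ to a modified double Poisson bracket on all of $A$.

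First I would check the $\lambda$-skew-symmetry \eqref{Eq:GG-sk}. A direct computation on each ordered pair of generators gives
\[
\dgal{x_i,x_j}^{I\!I} + \bigl(\dgal{x_j,x_i}^{I\!I}\bigr)^\circ = -\bigl(x_i\otimes x_j - x_j\otimes x_i\bigr),
\]
so the relevant weight is $\lambda=-1$. The asymmetric-looking pair $(x_1,x_3)$ serves as a useful sanity check: although $\dgal{x_1,x_3}^{I\!I}=0$, the flip of $\dgal{x_3,x_1}^{I\!I}$ yields $x_3\otimes x_1 - x_1\otimes x_3$, producing exactly the required right-hand side.

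Second, and this is most of the work, I would check the $\lambda$-Jacobi identity \eqref{Eq:GG-Jac}, which with $\lambda=-1$ reads
\[
\DJac(x_i,x_j,x_k) = x_j\otimes_1\dgal{x_i,x_k}^{I\!I}, \qquad 1\leq i,j,k\leq 3.
\]
Since only six ordered pairs of generators produce nonzero brackets, most of the $27$ triples give an identically zero identity, and the remaining ones are handled by a direct expansion of the three terms in \eqref{Eq:DJac} from the table \eqref{Eq:MDBII}. I would organise the bookkeeping by fixing the third argument $k$ and tabulating the values $\dgal{x_i,x_k}^{I\!I}$ for each $i$, so that every Jacobiator reduces to a short sum of elementary tensors that can be compared to the prescribed right-hand side by inspection.

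The main obstacle is keeping the signs and two-term expressions straight for triples involving the bracket $\dgal{x_3,x_1}^{I\!I}= x_1\otimes x_3 - x_3\otimes x_1$: its two-term nature produces several summands that cancel only after careful combination, and it is also the only nonzero bracket in which the two tensor factors are drawn from the same subspace $\K x_1\oplus \K x_3$, so it is the only source of possible nontrivial mixing in $\DJac$. Apart from this, the verification is a finite, mechanical exercise, and once both \eqref{Eq:GG-sk} and \eqref{Eq:GG-Jac} have been confirmed on generators, Theorem~\ref{Thm:GG} closes the argument.
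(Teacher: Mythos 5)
Your proposal is correct: the restriction of $\dgal{-,-}^{I\!I}$ to $V=\K x_1\oplus\K x_2\oplus\K x_3$ is indeed a $(-1)$-double Lie algebra, and Theorem~\ref{Thm:GG} then yields the claim; the finite Jacobi verification you defer does go through (e.g. $\DJac(x_3,x_2,x_1)=x_1\otimes x_2\otimes x_3-x_3\otimes x_2\otimes x_1=x_2\otimes_1\dgal{x_3,x_1}^{I\!I}$). This is essentially the original route of \cite{GG22}, whose Cor.~4 is the statement being proved. The paper's own argument in \ref{ss:ArtConj} performs the same generator-level checks but packages them differently: it multiplies $\dgal{-,-}^{I\!I}$ by $-1$ to normalise the weight to $(1,1,1)$, identifies the resulting operation \eqref{Eq:MDBIIbis} as the member of the classified family \eqref{Eq:CL3a} with $(\tilde\alpha_1,\tilde\alpha_2,\tilde\alpha_3)=(0,0,1)$ and $(\tilde\beta_1,\tilde\beta_2,\tilde\beta_3)=(1,0,0)$, checks that these constants satisfy \eqref{Eq:talph}--\eqref{Eq:tbeta}, and concludes via Proposition~\ref{Pr:CL3a} and Theorem~\ref{Thm:MAIN} rather than Theorem~\ref{Thm:GG}. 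By Example~\ref{Exmp:GG} the two frameworks coincide for constant weight, so the mathematical content is the same; what the paper's route buys is a proof independent of \cite{GG22} (the stated purpose of \ref{ss:ArtConj}) and it spares the case-by-case Jacobi check, since the parametric Jacobiator is computed once in the proof of Proposition~\ref{Pr:CL3a} and reduces to the two scalar identities \eqref{Eq:talph}--\eqref{Eq:tbeta}. What your route buys is brevity, at the cost of relying on the external Theorem~\ref{Thm:GG}.
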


In particular, this settled the second case of Conjecture~\ref{Conj:A}.
Forgetting about the conjecture, Goncharov and Gubarev's work allowed to define a new class of structures that sits between double Poisson brackets and modified double Poisson brackets.
The present work aims at introducing and studying the new notion of \emph{mixed double Poisson algebras of weight}
$\underline{\lambda}=(\lambda_{1},\ldots,\lambda_{d}) \in \K^d$ (cf. Definition~\ref{Def:PwDA}), which is related to the previous structures as follows when
$A:=\operatorname{Ass}(V)$ for $V=\oplus_{i=1}^d \K v_j$:
\begin{center}
\begin{tikzpicture}
    \node (A) at (0,3) {$\{$extended double Lie algebras$\}$};
    \node (B1) at (-4,1.5) {$\{$double Poisson algebras$\}$};
    \node (B2) at (4,1.5) {$\{$extended $\lambda$-double Lie algebras$\}$};
    \node (C) at (0,0) {$\{$mixed double Poisson algebras of weight $\underline{\lambda} \}$};
    \node (D) at (0,-1.5) {$\{$modified double Poisson algebras$\}$};
 \draw[left hook->, gray, thick,>=angle 90] (A) -- (B1);
 \node[gray,font=\scriptsize] (AB1) at (-3.5,2.4) {restricts to $V^{\otimes 2} \!\to\! V^{\otimes 2}$};
 \draw[right hook->, gray, thick,>=angle 90] (A) -- (B2);
 \node[gray,font=\scriptsize] (AB2) at (3,2.4) {case $\lambda\!=\!0$};
 \draw[right hook->, gray, thick,>=angle 90] (B1) -- (C);
 \node[gray,font=\scriptsize] (B1C) at (-3.8,0.8) {case $\underline{\lambda}\!=\!(0,\ldots,0)$};
 \draw[left hook->, gray, thick,>=angle 90] (B2) -- (C);
 \node[gray,font=\scriptsize] (B2C) at (4.4,0.95) {restricts to $V^{\otimes 2} \!\to\! V^{\otimes 2}$};
 \node[gray,font=\scriptsize] (B2Cbis) at (4.1,0.6) {with $\underline{\lambda}\!=\!(\lambda,\ldots,\lambda)$};
 \draw[left hook->, gray, thick,>=angle 90] (C) -- (D);
 \node[gray,font=\scriptsize] (CD) at (-0.1,-0.8) {special \,\, cases};
\end{tikzpicture}
\end{center}

The inclusion on the right of the second line will be explained in Example \ref{Exmp:GG}.
The upshot is that all these families provide examples of the weakest structure : $H_0$-Poisson structures.
Indeed, this is a consequence of Theorem \ref{Thm:MAIN}, which can be seen as a generalisation of Theorem \ref{Thm:GG}.


\section{Mixed double algebras} \label{S:MDA}

For $d\geq 1$, we let $A=\K\langle v_1,\ldots,v_d\rangle$.
We aim at generalising the skew-symmetry rule \eqref{Eq:GG-sk} of Goncharov and Gubarev.

\subsection{First definition} \label{ss:MDA1}

We fix two matrices
$\Lambda,M \in \Mat_d(\K)$ where $\Lambda$ is symmetric, while $M$ is skew-symmetric.
In terms of the entries $(\lambda_{ij})_{i,j=1}^d$ and $(\mu_{ij})_{i,j=1}^d$ of $\Lambda$ and $M$, this means that
for $1\leq i,j\leq d$ with $i\neq j$:
\begin{equation} \label{Eq:CondLM}
 \lambda_{ij}=\lambda_{ji}; \quad \mu_{ij}=-\mu_{ji}, \ \ \mu_{ii}=0\,.
\end{equation}

\begin{defn}  \label{Def:MixDA}
 Given a linear map $\dgal{-,-}:A\otimes A \to A\otimes A$ satisfying the Leibniz rules \eqref{Eq:Lei}, we say that the pair $(A,\dgal{-,-})$ is a \emph{mixed double algebra of type} $(\Lambda,M)$ if, for any $1\leq i,j\leq d$,
\begin{equation} \label{Eq:mixDA}
 \dgal{v_i,v_j} + \dgal{v_j , v_i}^\circ
 = \lambda_{ij} (v_i \otimes v_j - v_j \otimes v_i)
+ \mu_{ij} \, 1\otimes v_i v_j + \mu_{ji} \, v_j v_i \otimes 1\,.
\end{equation}
\end{defn}

We make the following observations:
\begin{enumerate}
 \item The condition \eqref{Eq:mixDA} is well-defined since it is preserved by applying the permutation of tensor factors $(-)^\circ$, which amounts to swapping the indices $i,j$.
 \item The type $(\Lambda,M)$ of a mixed double algebra depends on the chosen presentation of the free algebra. E.g. permuting generators amounts to conjugating the pair  $(\Lambda,M)$ by the corresponding permutation matrix.
 Furthermore, we can always replace $\Lambda$ by $\Lambda+D$, for $D$ a diagonal matrix.
 \item Multiplying $\dgal{-,-}$ by a factor $\nu \in \K$ changes the type to $(\nu\Lambda,\nu M)$.
This is analogous to the fact that we can multiply (modified) double (Poisson) brackets, and that a $\lambda$-double Lie algebra becomes a $\nu \lambda$-double Lie algebra.
 \item Taking $i=j$ in \eqref{Eq:mixDA} yields the cyclic skew-symmetry $\dgal{v_i,v_i} =- \dgal{v_i , v_i}^\circ$. Thus the case $d=1$ just restricts to Van den Bergh's definition of a double bracket \cite{VdB1}, and we will assume that $d\geq 2$ hereafter.
 \item If $M=0_d$ and if all entries of $\Lambda$ are equal to a fixed $\lambda\in \K$, we recover the condition \eqref{Eq:GG-sk} of Goncharov-Gubarev~\cite{GG22}.
\end{enumerate}

Let us introduce some convenient notation.
Since $A=\K\langle v_1,\ldots,v_d\rangle$, any element can be written as a constant term $\nu \in \K$ added to a linear combination of terms of the form
\begin{equation*}
 a=v_{i_1} \cdots v_{i_r}, \quad \text{where }i_1,\ldots,i_r \in \{1,\ldots,d\}, \ r \geq 1\,.
\end{equation*}
For such a term $a\in A$, we set for any $1\leq \alpha,\gamma \leq r$,
\begin{equation} \label{Eq:Not}
 a_\alpha^-:=v_{i_1} \cdots v_{i_{\alpha-1}}, \quad
 a_\alpha^+:=v_{i_{\alpha+1}} \cdots v_{i_r}, \quad
 a_{\alpha,\gamma}^\sim :=\left\{
\begin{array}{ll}
v_{i_{\alpha}} \cdots v_{i_\gamma}, & \alpha \leq \gamma, \\
1,& \alpha>\gamma,
\end{array}
 \right.
\end{equation}
so that $a=a_\alpha^- v_{i_{\alpha}}  a_\alpha^+$ and $a=a_\alpha^- a_{\alpha,\gamma}^\sim  a_\gamma^+$ if $\alpha\leq \gamma$.

\begin{prop} \label{Pr:wsk}
 Let $(A,\dgal{-,-})$ be a mixed double algebra of type $(\Lambda,M)$.
If the following conditions are satisfied
\begin{equation}
\lambda_{ij}-\lambda_{kl}=\mu_{il}-\mu_{kj}\,, \quad  1\leq i,j,k,l \leq d,   \label{Eq:wskM}
\end{equation}
then the $H_0$-skew-symmetry rule \eqref{Eq:Skew} holds for any $a,b\in A$.
\end{prop}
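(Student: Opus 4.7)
The plan is to reduce the $H_0$-skew-symmetry statement to a cancellation in $H_0(A)$ obtained from a combinatorial identity on cyclic shifts of monomials. First note that, since $\mult(\dgal{b,a}^\circ)=\dgal{b,a}''\dgal{b,a}'$ and $\{b,a\}=\dgal{b,a}'\dgal{b,a}''$ differ by the commutator $[\dgal{b,a}'',\dgal{b,a}']\in[A,A]$, it suffices to show that $\mult(X_{a,b})\in[A,A]$ for every $a,b\in A$, where $X_{a,b}:=\dgal{a,b}+\dgal{b,a}^\circ$. Iterating \eqref{Eq:Lei} and using the fact that $(xy)^\circ=x^\circ y^\circ$ on $A\otimes A$, one obtains, for monomials $a=v_{i_1}\cdots v_{i_r}$ and $b=v_{j_1}\cdots v_{j_s}$, the expansion
\[X_{a,b}=\sum_{\alpha,\beta}(b_\beta^-\otimes a_\alpha^-)\,X_{v_{i_\alpha},v_{j_\beta}}\,(a_\alpha^+\otimes b_\beta^+).\]

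Assumption \eqref{Eq:wskM}, specialised by fixing $k=i$ (respectively $l=j$), forces $\lambda_{ij}=\tfrac{\lambda_{ii}+\lambda_{jj}}{2}$ and $\mu_{ij}=\tfrac{\lambda_{ii}-\lambda_{jj}}{2}$; writing $\lambda_i:=\lambda_{ii}$, the defining identity \eqref{Eq:mixDA} takes the symmetric form
\[X_{v_i,v_j}=\tfrac{\lambda_i+\lambda_j}{2}(v_i\otimes v_j-v_j\otimes v_i)+\tfrac{\lambda_i-\lambda_j}{2}(1\otimes v_iv_j-v_jv_i\otimes 1).\]
Applying $\mult$ and cycling the outer prefix $b_\beta^-$ to the tail modulo $[A,A]$ rewrites
\[\mult(X_{a,b})\equiv \sum_\beta Z_{j_\beta}\,b^{(\beta)}\pmod{[A,A]},\]
where $a^{(\alpha)}:=a_\alpha^+a_\alpha^-$, $b^{(\beta)}:=b_\beta^+b_\beta^-$, and $Z_j:=\sum_\alpha X_{v_{i_\alpha},v_j}'\,a^{(\alpha)}\,X_{v_{i_\alpha},v_j}''$. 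Substituting the explicit form of $X_{v_i,v_j}$ and grouping by the coefficients $\lambda_{i_\alpha}$ and $\lambda_j$, a direct computation yields
\[Z_j=[W,v_j]+\tfrac{\lambda_j}{2}(Uv_j+v_jU),\]
with $W:=\sum_\alpha\tfrac{\lambda_{i_\alpha}}{2}\bigl(v_{i_\alpha}a^{(\alpha)}+a^{(\alpha)}v_{i_\alpha}\bigr)$ and $U:=\sum_\alpha[v_{i_\alpha},a^{(\alpha)}]$.

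The crux is the combinatorial identity $\sum_\alpha v_{i_\alpha}a^{(\alpha)}=\sum_\alpha a^{(\alpha)}v_{i_\alpha}$, as both sides express the sum of all $r$ cyclic shifts of $a$; this forces $U=0$, so $Z_j=[W,v_j]$ with $W$ independent of $j$. The final manipulation rewrites $\sum_\beta[W,v_{j_\beta}]b^{(\beta)}=W\sum_\beta v_{j_\beta}b^{(\beta)}-\sum_\beta v_{j_\beta}Wb^{(\beta)}$; cycling $v_{j_\beta}$ past $Wb^{(\beta)}$ in $H_0(A)$ transforms the second sum into $W\sum_\beta b^{(\beta)}v_{j_\beta}$, and the $b$-analogue of the cyclic-shift identity identifies both $\beta$-sums, so their difference vanishes in $H_0(A)$. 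Hence $\mult(X_{a,b})\in[A,A]$, as required. The main obstacle is to set up the bookkeeping so that the combinatorial cyclic-shift identity can be applied at the two key places---first to eliminate $U$, then to collapse the final $\beta$-sum---since these are exactly the points where \eqref{Eq:wskM} enters essentially, through its consequence that $\lambda_{ij}$ and $\mu_{ij}$ are both determined by the single family $(\lambda_i)_{i=1}^d$.
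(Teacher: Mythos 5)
Your argument is correct: the expansion of $\dgal{a,b}+\dgal{b,a}^\circ$ via the Leibniz rules, the identity $Z_j=[W,v_j]+\tfrac{\lambda_j}{2}(Uv_j+v_jU)$, the cyclic-shift identity $\sum_\alpha v_{i_\alpha}a_\alpha^+a_\alpha^-=\sum_\alpha a_\alpha^+a_\alpha^-v_{i_\alpha}$ forcing $U=0$, and the final collapse of the $\beta$-sum all check out, so $\mult(\dgal{a,b}+\dgal{b,a}^\circ)\in[A,A]$ and \eqref{Eq:Skew} follows. The underlying mechanism is the same as in the paper (Leibniz expansion over pairs of letters, then cyclic re-indexing modulo $[A,A]$), but your bookkeeping is genuinely different. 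You first use \eqref{Eq:wskM} to normalise the type to the weight form --- your formulas $\lambda_{ij}=\tfrac{\lambda_{ii}+\lambda_{jj}}{2}$, $\mu_{ij}=\tfrac{\lambda_{ii}-\lambda_{jj}}{2}$ are exactly \eqref{Eq:MuLam}, which the paper only extracts afterwards in \ref{ss:MDA2}; note the specialisations you need are $k=l=i$ and $k=l=j$ rather than ``$k=i$, resp.\ $l=j$'', a harmless imprecision. The paper instead keeps a general pair $(\Lambda,M)$, multiplies out, shifts both indices cyclically, and matches the coefficient of each word $\mathrm{V}_{\alpha,\beta}$, which vanishes term by term directly from \eqref{Eq:wskM}. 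Your route buys a more structural cancellation (everything is packaged into $[W,v_j]$ and then killed in $H_0(A)$ by a single $b$-side cyclic identity), at the price of genuinely using the diagonal instances of \eqref{Eq:wskM}; the paper's coefficient-matching version is what allows the weakening of the hypothesis alluded to in the remark after the proposition, but under the hypothesis as stated both proofs are complete.
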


\begin{proof}
 By linearity, it suffices to verify \eqref{Eq:Skew} for $a,b\in A$ of the form
 $a=v_{i_1} \cdots v_{i_r}$ and $b=v_{j_1} \cdots v_{j_s}$ with indices in $\{1,\ldots,d\}$ and $r,s\geq 1$.
Using the Leibniz rules \eqref{Eq:Lei}, the mixed double algebra condition \eqref{Eq:mixDA} and the notation \eqref{Eq:Not}, we can write
\begin{align*}
 \dgal{a,b}+\dgal{b,a}^\circ
&=\sum_{\alpha=1}^r \sum_{\beta=1}^s \,
(b_\beta^- \otimes a_\alpha^-) (\dgal{v_{i_\alpha},v_{j_\beta}}+\dgal{v_{j_\beta},v_{i_\alpha}}^\circ ) (a_\alpha^+ \otimes b_\beta^+) \\
&= \sum_{\alpha=1}^r \sum_{\beta=1}^s \,\lambda_{i_\alpha j_\beta}(
b_\beta^- v_{i_\alpha} a_\alpha^+  \otimes a_\alpha^- v_{j_\beta} b_\beta^+
- b_\beta^- v_{j_\beta} a_\alpha^+  \otimes a_\alpha^- v_{i_\alpha} b_\beta^+
) \\
&\quad + \sum_{\alpha=1}^r \sum_{\beta=1}^s \,(
\mu_{i_\alpha j_\beta}\, b_\beta^- a_\alpha^+  \otimes a_\alpha^- v_{i_\alpha} v_{j_\beta} b_\beta^+
+\mu_{j_\beta i_\alpha}\, b_\beta^- v_{j_\beta}v_{i_\alpha} a_\alpha^+  \otimes a_\alpha^- b_\beta^+
)\,.
\end{align*}
Multiplying the tensor factors, we get modulo commutators
\begin{align*}
 \{a,b\}+&\{b,a\}^\circ
=\sum_{\alpha=1}^r \sum_{\beta=1}^s \,\lambda_{i_\alpha j_\beta}(
v_{i_\alpha} a_{\alpha}^+  a_\alpha^- v_{j_\beta} b_{\beta}^+ b_\beta^-
- a_\alpha^+ a_{\alpha}^-v_{i_\alpha} b_\beta^+ b_{\beta}^-v_{j_\beta}
) \\
+& \sum_{\alpha=1}^r \sum_{\beta=1}^s \,(
\mu_{i_\alpha j_\beta}\, a_\alpha^+  a_{\alpha}^-v_{i_\alpha} v_{j_\beta}b_{\beta}^+ b_\beta^-
+\mu_{j_\beta i_\alpha}\,  v_{i_\alpha} a_{\alpha}^+ a_\alpha^- b_\beta^+ b_{\beta}^-v_{j_\beta}
)\, \\
=& \sum_{\alpha=1}^r \sum_{\beta=1}^s \,(\lambda_{i_\alpha j_\beta}-\lambda_{i_{\alpha-1} j_{\beta-1}} + \mu_{i_{\alpha-1} j_{\beta}}+\mu_{j_{\beta-1}i_{\alpha}} )
v_{i_\alpha}a_{\alpha}^+  a_\alpha^- v_{j_\beta} b_{\beta}^+ b_\beta^- \quad \text{mod }[A,A].
\end{align*}
Note that we consider indices modulo $r$ or $s$; this is how we used $a_\alpha^+  a_{\alpha}^-v_{i_\alpha}=v_{i_{\alpha+1}}a_{\alpha+1}^+  a_{\alpha+1}^-$ for any $1\leq \alpha \leq r$ and did the same for $b$.
By skew-symmetry of $M$,  this reads
\begin{equation} \label{Eq:Lem1a}
 \sum_{\alpha=1}^r \sum_{\beta=1}^s \,(\lambda_{i_\alpha j_\beta}-\lambda_{i_{\alpha-1} j_{\beta-1}} + \mu_{i_{\alpha-1} j_{\beta}}-\mu_{i_{\alpha}j_{\beta-1}} ) \, \mathrm{V}_{\alpha,\beta}= 0  \quad \text{mod }[A,A],
\end{equation}
for $\mathrm{V}_{\alpha,\beta}:=v_{i_\alpha}a_{\alpha}^+  a_\alpha^- v_{j_\beta} b_{\beta}^+ b_\beta^-$.
Each summand in \eqref{Eq:Lem1a} is identically zero by \eqref{Eq:wskM}.
\end{proof}

\begin{rem}
 It is clear from \eqref{Eq:mixDA} that the choice of diagonal entries $\lambda_{ii}$ can be arbitrary as the first two terms cancel out for $i=j$.
Accordingly, one can prove Proposition \ref{Pr:wsk} under weaker assumptions than \eqref{Eq:wskM}, which omit the cases $i=j$ and $k=l$. We shall not use this more general formalism, and we skip the proof. (The interested reader may find that general case as Lemma 3.1 in v1 of the arXiv version of this manuscript).
\end{rem}

\subsection{Refining the definition} \label{ss:MDA2}

Assume that \eqref{Eq:wskM} is satisfied by $(\Lambda,M)$. We can deduce
\begin{equation} \label{Eq:MuLam}
 \mu_{il}=\frac12(\lambda_{ii}-\lambda_{ll}), \quad \lambda_{il}=\frac12(\lambda_{ii}+\lambda_{ll}), \quad 1\leq i,l \leq d\,.
\end{equation}
Furthermore, given $(\lambda_{11},\ldots,\lambda_{dd})\in \K^d$, we can define matrices $(\Lambda,M)$ through \eqref{Eq:MuLam} and the conditions \eqref{Eq:CondLM},\eqref{Eq:wskM} are automatically satisfied.
We get the next simpler definition.
\begin{defn} \label{Def:weiDA}
Let $\underline{\lambda}=(\lambda_1,\ldots,\lambda_d)\in \K^d$.
 Given a linear map $\dgal{-,-}:A\otimes A \to A\otimes A$ satisfying the Leibniz rules \eqref{Eq:Lei}, we say that the pair $(A,\dgal{-,-})$ is a \emph{mixed double algebra of weight} $\underline{\lambda}$ if \eqref{Eq:intr1} is satisfied for any $1\leq i,j\leq d$.
\end{defn}

We can interpret Proposition \ref{Pr:wsk} as follows.

\begin{cor} \label{Cor:skew}
A mixed double algebra of weight $\underline{\lambda}\in \K^d$ is equipped with a modified double bracket.
\end{cor}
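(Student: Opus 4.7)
The plan is to reduce Corollary~\ref{Cor:skew} directly to Proposition~\ref{Pr:wsk} via the parameter correspondence sketched in the paragraphs immediately preceding Definition~\ref{Def:weiDA}.

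First I would observe that \eqref{Eq:intr1} is a particular instance of \eqref{Eq:mixDA}. Namely, given $\underline{\lambda}=(\lambda_1,\ldots,\lambda_d)\in\K^d$, set
\[
 \lambda_{ij}=\tfrac12(\lambda_i+\lambda_j), \qquad \mu_{ij}=\tfrac12(\lambda_i-\lambda_j),
\]
for $1\leq i,j\leq d$, and form the matrices $\Lambda=(\lambda_{ij})$ and $M=(\mu_{ij})$. Then the right-hand side of \eqref{Eq:mixDA} with these entries is
\[
\tfrac{\lambda_i+\lambda_j}{2}(v_i\otimes v_j - v_j\otimes v_i) + \tfrac{\lambda_i-\lambda_j}{2}\,1\otimes v_iv_j + \tfrac{\lambda_j-\lambda_i}{2}\,v_jv_i\otimes 1,
\]
which is exactly the right-hand side of \eqref{Eq:intr1}. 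Hence a mixed double algebra of weight $\underline{\lambda}$ is the same datum as a mixed double algebra of type $(\Lambda,M)$ with $(\Lambda,M)$ given by the above formulas.

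Second, I would check that this $(\Lambda,M)$ satisfies the hypotheses of Proposition~\ref{Pr:wsk}. The symmetry $\lambda_{ij}=\lambda_{ji}$, the skew-symmetry $\mu_{ij}=-\mu_{ji}$, and $\mu_{ii}=0$ (conditions \eqref{Eq:CondLM}) are immediate from the defining formulas. For \eqref{Eq:wskM}, a one-line computation gives
\[
 \lambda_{ij}-\lambda_{kl}=\tfrac{(\lambda_i+\lambda_j)-(\lambda_k+\lambda_l)}{2}=\mu_{il}-\mu_{kj},
\]
for all $1\leq i,j,k,l\leq d$.

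Third, Proposition~\ref{Pr:wsk} then yields the $H_0$-skew-symmetry rule $\{a,b\}+\{b,a\}\in[A,A]$ for all $a,b\in A$. Combined with the Leibniz rules \eqref{Eq:Lei} which hold by Definition~\ref{Def:weiDA}, this is precisely the content of Definition~\ref{Def:MDPA} for a modified double bracket, so $(A,\dgal{-,-})$ is equipped with such a bracket. There is essentially no obstacle; the corollary is a clean reformulation of Proposition~\ref{Pr:wsk} once the $d$-parameter family indexed by $\underline{\lambda}$ has been identified as the solution set of \eqref{Eq:wskM}. The only subtlety worth noting is that the diagonal case $i=j$ of \eqref{Eq:intr1} reduces to the ordinary cyclic skew-symmetry $\dgal{v_i,v_i}=-\dgal{v_i,v_i}^\circ$, consistent with the corresponding case of \eqref{Eq:mixDA}.
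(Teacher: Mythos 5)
Your proposal is correct and follows the same route as the paper: Corollary~\ref{Cor:skew} is indeed just Proposition~\ref{Pr:wsk} applied to the type $(\Lambda,M)$ determined by $\lambda_{ij}=\tfrac12(\lambda_i+\lambda_j)$, $\mu_{ij}=\tfrac12(\lambda_i-\lambda_j)$, exactly the correspondence \eqref{Eq:MuLam} described before Definition~\ref{Def:weiDA}. Your verification of \eqref{Eq:CondLM} and \eqref{Eq:wskM}, and the final appeal to the Leibniz rules to match Definition~\ref{Def:MDPA}, merely spells out what the paper leaves implicit.
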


\begin{rem} \label{Rem:IndFree}
 As suggested by a referee, a basis-free version of \eqref{Eq:intr1} can be given as follows.
Define the subspace $V_{\lambda}= \operatorname{span}_\K\{v_i \mid \lambda_i =\lambda, \, 1\leq i \leq d\}$ for any $\lambda\in \K$.
Then, for any $\lambda,\lambda'\in \K$, $x\in V_\lambda$ and $y\in V_{\lambda'}$, we have
\begin{equation*}
  \dgal{x,y} + \dgal{y,x}^\circ
  = \frac{\lambda+\lambda'}{2} \, (x \otimes y - y \otimes x)
 + \frac{\lambda-\lambda'}{2} \, (1\otimes xy - yx \otimes 1)\,.
 \end{equation*}
\end{rem}


\section{Jacobi identity} \label{S:Jacobi}

Fix $d\geq 2$ and $A=\K\langle v_1,\ldots,v_d\rangle$.
Recall Definition \ref{Def:weiDA}.

\begin{defn} \label{Def:PwDA}
A mixed double algebra $(A,\dgal{-,-})$ of weight $\underline{\lambda}\in \K^d$ is \emph{Poisson} when,
for any $1\leq i,j,k \leq d$,
\begin{equation} \label{Eq:DJac-v}
 \DJac(v_i,v_j,v_k)=-\frac{\lambda_i+\lambda_j}{2}\, v_j\otimes_1 \dgal{v_i,v_k}+\frac{\lambda_i-\lambda_j}{2} \, 1\otimes_1 (v_j \ast \dgal{v_i,v_k}).
\end{equation}
We call \eqref{Eq:DJac-v} the \emph{Poisson property} of $\dgal{-,-}$.
\end{defn}

By Lemma \ref{Lem:DJac}, we get in the Poisson case for any $1\leq i,j\leq d$ and $c\in A$,
\begin{equation} \label{Eq:DJac-c}
 \DJac(v_i,v_j,c)=-\frac{\lambda_i+\lambda_j}{2}\, v_j\otimes_1 \dgal{v_i,c}+\frac{\lambda_i-\lambda_j}{2} \, 1\otimes_1 (v_j \ast \dgal{v_i,c})\,.
\end{equation}

\begin{rem}
With the notation of Remark~\ref{Rem:IndFree}, an index-free expression is given for any
$\lambda,\lambda',\mu\in \K$, $x\in V_\lambda$, $y\in V_{\lambda'}$ and $z\in V_\mu$ by
\begin{equation*}
 \DJac(x,y,z)=-\frac{\lambda+\lambda'}{2}\, y\otimes_1 \dgal{x,z}+\frac{\lambda-\lambda'}{2} \, 1\otimes_1 (y \ast \dgal{x,z}).
\end{equation*}
\end{rem}

We shall prove the following result in \ref{ss:PfJac}, as a generalisation of \cite[Thm.~9]{GG22}.

\begin{prop} \label{Pr:Jac}
Fix $(A,\dgal{-,-})$ a mixed double Poisson algebra of weight $\underline{\lambda}\in \K^d$.
Then the Jacobi identity \eqref{Eq:Jac} is satisfied for any $a,b,c\in A$.
\end{prop}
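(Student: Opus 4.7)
The target is the Jacobi expression
\[
\mathcal{J}(a,b,c) := \{a,\{b,c\}\} - \{b,\{a,c\}\} - \{\{a,b\},c\}
\]
in $A$, which I want to show vanishes for all $a,b,c$. The Leibniz rules \eqref{Eq:Lei} immediately give that $\{a,-\}$ and $\{-,c\}$ are derivations of $A$; hence $b\mapsto\mathcal{J}(a,b,c)$ and $c\mapsto\mathcal{J}(a,b,c)$ are themselves derivations, and it suffices to prove $\mathcal{J}(a,v_j,v_k)=0$ for arbitrary $a\in A$ and any generators $v_j,v_k$. The first slot is the subtle one: $a\mapsto\mathcal{J}(a,v_j,v_k)$ is only a derivation \emph{up to} an error built from $\{a,v_j\}+\{v_j,a\}$, which lies in $[A,A]$ by Corollary~\ref{Cor:skew} but need not vanish in $A$.

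I would lift the calculation to $\DJac$ and use Lemma~\ref{Lem:DJac}. Starting from the Poisson axiom \eqref{Eq:DJac-v} for generators, Lemma~\ref{Lem:DJac} extends the formula in the second and third arguments, producing an explicit tensor expression for $\DJac(v_i,b,c)$ (the case of $c\in A$ is \eqref{Eq:DJac-c}; the second argument is handled analogously by induction on word length). For the first argument, I proceed by induction on the word length of $a$ using the non-derivation identity \eqref{Eq:DJac-DerNot}: the correction term $-\dgal{a_2,c}'\otimes(\dgal{v_j,a_1}+\dgal{a_1,v_j}^\circ)\dgal{a_2,c}''$ is precisely the RHS of \eqref{Eq:intr1} when $a_1=v_i$ is a generator, so it is fully explicit.

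Finally, I apply the multiplication $m_3\colon A^{\otimes 3}\to A$ and match with $\mathcal{J}$. A direct Leibniz expansion realises $\mathcal{J}(a,b,c)$ as a sum of six contractions of the form $\dgal{*,*}'\,\{*,\dgal{*,*}''\}$ arising from the three tensor operations in \eqref{Eq:dbr3}; three of these are exactly $m_3$ of $\dgal{a,\dgal{b,c}}_L$, $\dgal{b,\dgal{a,c}}_R$, $\dgal{\dgal{a,b},c}_L$, and the remaining three, together with the permutation mismatch arising from the $\otimes_1$ in the definition of $\dgal{\dgal{a,b},c}_L$, are compensated by $m_3$ of the RHS of \eqref{Eq:DJac-v}. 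The key calculation is that \emph{both} summands $v_j\otimes_1 \dgal{v_i,c}$ and $1\otimes_1(v_j\ast\dgal{v_i,c})$ in \eqref{Eq:DJac-v} collapse under $m_3$ to the single element $\dgal{v_i,c}'\,v_j\,\dgal{v_i,c}''$ of $A$, with coefficients $\tfrac{\lambda_i+\lambda_j}{2}$ and $\tfrac{\lambda_i-\lambda_j}{2}$ combining to exactly the coefficient of the residual term.

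The main obstacle is the combinatorial bookkeeping of this cancellation: the identity $\mathcal{J}(a,b,c)=0$ is required in $A$ itself, not merely in $H_0(A)$, so one cannot discard commutators along the way. Each correction produced when extending $\DJac$ in the first argument via \eqref{Eq:DJac-DerNot} must be matched, factor ordering and all, by a correction coming from $m_3$ applied to the RHS of \eqref{Eq:DJac-v}; this precise matching is exactly what dictates the form of the axioms \eqref{Eq:intr1} and \eqref{Eq:DJac-v} of a mixed double Poisson algebra, and organising the sum by the positions of generators inside the monomials $a,b,c$ (as in the proof of \cite[Thm.~9]{GG22} for the uniform-weight case) allows one to verify the cancellations term by term.
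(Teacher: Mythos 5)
There is a genuine gap at the very first step of your plan. The Leibniz rule \eqref{Eq:Lei2} does \emph{not} make $\{-,c\}$ a derivation of $A$: it gives
\begin{equation*}
 \{ab,c\}=\mult\big((1\otimes a)\dgal{b,c}+\dgal{a,c}(b\otimes 1)\big)=\dgal{b,c}'\,a\,\dgal{b,c}''+\dgal{a,c}'\,b\,\dgal{a,c}''\,,
\end{equation*}
which inserts the other factor \emph{between} the two tensor legs, so $\{-,c\}$ is a derivation only modulo $[A,A]$. Since the Jacobi identity \eqref{Eq:Jac} is required in $A$ itself, you cannot discard these commutators. Consequently $b\mapsto \{a,\{b,c\}\}-\{b,\{a,c\}\}-\{\{a,b\},c\}$ is \emph{not} a derivation: expanding with $b=b_1b_2$ one finds sandwiched expressions of the form $\{a,\dgal{b_2,c}'\}\,b_1\,\dgal{b_2,c}''+\dgal{b_2,c}'\,b_1\,\{a,\dgal{b_2,c}''\}-\dgal{b_2,\{a,c\}}'\,b_1\,\dgal{b_2,\{a,c\}}''-\dgal{\{a,b_2\},c}'\,b_1\,\dgal{\{a,b_2\},c}''$, whose vanishing does not follow from the vanishing of the fully multiplied Jacobiator in $(a,b_2,c)$, because the splitting of the tensor factors matters. (The third slot is fine: there the cross terms do cancel, so reducing $c$ to a generator is legitimate; the second slot is exactly where the reduction fails.) Your subsequent steps lean on this reduction twice: the correction term in \eqref{Eq:DJac-DerNot} is controlled by \eqref{Eq:intr1} only because you have assumed the second argument is a single generator, and your ``key calculation'' that both summands of the right-hand side of \eqref{Eq:DJac-v} collapse under $\mult_3$ to $\dgal{v_i,c}'v_j\dgal{v_i,c}''$ is true only when nothing is interposed between the tensor legs --- again the generator case.

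In the actual proof the second argument must be kept as a general monomial $b=v_{j_1}\cdots v_{j_s}$, and the terms your reduction would erase are precisely the hard part: the cross terms where the two brackets hit different letters of $a$ and $b$ (the sums over pairs $\beta<\epsilon$, resp.\ $\alpha<\gamma$), which are matched using \eqref{Eq:intr1} and the cyclic reindexing \eqref{Eq:vbb} in Lemmas~\ref{Lem:B2C4} and~\ref{Lem:B3C3}; moreover, after sandwiching by $a_\alpha^\pm,b_\beta^\pm$ the two summands of \eqref{Eq:DJac-v} no longer coincide and are only combined after a shift $\beta\mapsto\beta+1$, producing the coefficients $\tfrac{\lambda_{j_\beta}+\lambda_{j_{\beta+1}}}{2}$ in \eqref{Eq:A1B4C2}. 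Your overall strategy (expand the multiplied Jacobiator into the six contractions, isolate $\mult_3\circ\DJac$, and control the remainder by \eqref{Eq:intr1} and \eqref{Eq:DJac-v}) is the right general shape and is morally what the paper does, but as written the proposal rests on a false derivation property and defers the genuine combinatorial cancellations to a reference, so it does not constitute a proof.
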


\begin{exmp} \label{Exmp:GG}
Fix $\lambda \in \K$, and let $(A,\dgal{-,-})$ be a mixed double Poisson algebra of weight $(\lambda,\ldots,\lambda)$.
The conditions  \eqref{Eq:intr1} and \eqref{Eq:DJac-v} then read
\begin{align*}
\dgal{v_i,v_j}+ \dgal{v_j,v_i}^\circ&=\lambda\, (v_i \otimes v_j - v_j \otimes v_i)\,, \\
 \DJac(v_i,v_j,v_k)&=-\lambda\, v_j\otimes_1 \dgal{v_i,v_k}\,.
\end{align*}
By linearity, these identities yield \eqref{Eq:GG-sk} and \eqref{Eq:GG-Jac} for any $u,v,w \in V:=\bigoplus_{i=1}^d \K v_i$.
Hence we have that a mixed double Poisson algebra of weight $(\lambda,\ldots,\lambda)$ that restricts to a map
$V\otimes V\to V \otimes V$ (when considered on the $\K$-linear span $V$ of generators) is the extension to $\operatorname{Ass}(V)$ of a $\lambda$-double Lie algebra structure on $V$.
\end{exmp}

\subsection{Preparation}
We  start by simply assuming that $A=\K\langle v_1,\ldots,v_d\rangle$ is equipped with a linear map $\dgal{-,-}:A^{\otimes 2} \to A^{\otimes 2}$ satisfying the Leibniz rules \eqref{Eq:Lei}.
We work with the associated operation $\br{-,-}:= \mult \circ \dgal{-,-}: A\otimes A \to A$. Fix
\begin{equation} \label{Eq:abForm}
 a=v_{i_1} \ldots v_{i_r}, \quad  b=v_{j_1} \ldots v_{j_s}, \quad
 r,s\geq 1, \ \ 1\leq i_1, \ldots, i_r,j_1, \ldots, j_s \leq d\,.
\end{equation}
Recall the notation \eqref{Eq:Not}.
We present formulas that can be found in \cite[pp.24-25]{GG22}.

\begin{lem}
The following holds:
\begin{subequations} \label{Eq:br-A}
 \begin{align}
\br{a,\br{b,c}}=& \
\sum_{\alpha=1}^r \sum_{\beta=1}^s \dgal{v_{i_\alpha} , \dgal{v_{j_\beta},c}'}' a_\alpha^+ a_\alpha^-
\dgal{v_{i_\alpha} , \dgal{v_{j_\beta},c}'}'' b_\beta^+ b_\beta^-  \dgal{v_{j_\beta},c}'' \label{Eq:A1} \\
&+ \sum_{\alpha=1}^r \sum_{\substack{\beta,\epsilon=1 \\ \beta<\epsilon}}^s
\dgal{v_{j_\beta},c}' b_{\beta+1,\epsilon-1}^\sim \dgal{v_{i_\alpha} , v_{j_\epsilon}}' a_\alpha^+ a_\alpha^-
\dgal{v_{i_\alpha} , v_{j_\epsilon}}''  b_\epsilon^+ b_\beta^-  \dgal{v_{j_\beta},c}''  \label{Eq:A2} \\
&+ \sum_{\alpha=1}^r \sum_{\substack{\beta,\epsilon=1 \\ \epsilon<\beta}}^s
\dgal{v_{j_\beta},c}' b_\beta^+ b_\epsilon^-  \dgal{v_{i_\alpha} , v_{j_\epsilon}}' a_\alpha^+ a_\alpha^-
\dgal{v_{i_\alpha} , v_{j_\epsilon}}'' b_{\epsilon+1,\beta-1}^\sim   \dgal{v_{j_\beta},c}''  \label{Eq:A3} \\
&+
\sum_{\alpha=1}^r \sum_{\beta=1}^s \dgal{v_{j_\beta},c}' b_\beta^+ b_\beta^-
\dgal{v_{i_\alpha} , \dgal{v_{j_\beta},c}''}' a_\alpha^+ a_\alpha^- \dgal{v_{i_\alpha} , \dgal{v_{j_\beta},c}''}''  \,.  \label{Eq:A4}
 \end{align}
\end{subequations}
\end{lem}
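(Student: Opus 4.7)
The plan is to derive the four-term expansion \eqref{Eq:br-A} by iterating the Leibniz rules \eqref{Eq:Lei} in a prescribed order, starting from the innermost bracket $\dgal{b,c}$ and working outward. First I would apply \eqref{Eq:Lei2} iteratively on $b = v_{j_1} \cdots v_{j_s}$ to obtain
\begin{equation*}
\dgal{b,c} = \sum_{\beta=1}^{s} (1 \otimes b_\beta^-)\, \dgal{v_{j_\beta},c}\, (b_\beta^+ \otimes 1)\,,
\end{equation*}
and then apply $\mult$ to get $\br{b,c} = \sum_\beta P_\beta\, Q_\beta\, R_\beta$ with $P_\beta := \dgal{v_{j_\beta},c}'$, $Q_\beta := b_\beta^+ b_\beta^-$, and $R_\beta := \dgal{v_{j_\beta},c}''$.

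Next I would expand the outer bracket: by iterated \eqref{Eq:Lei2} in the first argument,
\begin{equation*}
\dgal{a,\br{b,c}} = \sum_{\alpha=1}^{r} \sum_{\beta=1}^{s} (1 \otimes a_\alpha^-)\, \dgal{v_{i_\alpha},\, P_\beta Q_\beta R_\beta}\, (a_\alpha^+ \otimes 1)\,,
\end{equation*}
then apply \eqref{Eq:Lei1} twice to split each summand into three pieces:
\begin{equation*}
\dgal{v_{i_\alpha}, P_\beta Q_\beta R_\beta} = (P_\beta Q_\beta \otimes 1) \dgal{v_{i_\alpha}, R_\beta} + (P_\beta \otimes 1) \dgal{v_{i_\alpha}, Q_\beta} (1 \otimes R_\beta) + \dgal{v_{i_\alpha}, P_\beta} (1 \otimes Q_\beta R_\beta)\,.
\end{equation*}
After the surrounding multiplication by $(1 \otimes a_\alpha^-)$ on the left and $(a_\alpha^+ \otimes 1)$ on the right, followed by $\mult$, these three blocks produce the families \eqref{Eq:A4}, (\eqref{Eq:A2}+\eqref{Eq:A3}), and \eqref{Eq:A1} respectively. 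This is a direct calculation using the multiplication convention $(a' \otimes a'')(b' \otimes b'') = a'b' \otimes a''b''$ on $A \otimes A$; one checks in each case that the two Sweedler tensor factors of the outermost bracket are separated by $a_\alpha^+ a_\alpha^-$, while the factors $P_\beta, Q_\beta, R_\beta$ end up arranged to the left or right of $\dgal{v_{i_\alpha},-}$ as dictated by the Leibniz rule employed.

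The only step requiring genuine care is the middle block, which will give rise to \eqref{Eq:A2} and \eqref{Eq:A3}. There $Q_\beta = v_{j_{\beta+1}} \cdots v_{j_s} v_{j_1} \cdots v_{j_{\beta-1}}$ is the cyclically shifted product of the generators of $b$ omitting $v_{j_\beta}$. Applying iterated \eqref{Eq:Lei1} produces a sum over $\epsilon \in \{1,\ldots,s\} \setminus \{\beta\}$, and the cyclic shape of $Q_\beta$ forces the splitting into two ranges. Using the notation \eqref{Eq:Not}, the left/right parts in the case $\beta < \epsilon$ read $b_{\beta+1,\epsilon-1}^\sim$ and $b_\epsilon^+ b_\beta^-$, while in the case $\epsilon < \beta$ they read $b_\beta^+ b_\epsilon^-$ and $b_{\epsilon+1,\beta-1}^\sim$ — matching precisely the factors appearing in \eqref{Eq:A2} and \eqref{Eq:A3}. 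The main obstacle is exactly this index bookkeeping; everywhere else the argument is a mechanical iteration of the Leibniz rules, with no appeal to $H_0$-skew-symmetry, the Jacobi identity, or any mixed double Poisson structure, consistent with the lemma being asserted under the Leibniz assumption alone.
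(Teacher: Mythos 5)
Your proposal is correct and follows essentially the same route as the paper: both are a direct iteration of the Leibniz rules \eqref{Eq:Lei} followed by the multiplication map, with the same index bookkeeping for the middle block (the paper keeps $b_\beta^+$ and $b_\beta^-$ as separate factors and uses that $\br{a,-}=\mult\circ\dgal{a,-}$ is a derivation, so it gets the four families \eqref{Eq:A1}--\eqref{Eq:A4} directly, whereas you group $Q_\beta=b_\beta^+b_\beta^-$ and split the resulting middle block over $\epsilon\lessgtr\beta$ — a cosmetic difference). Your identification of the prefix/suffix factors $b_{\beta+1,\epsilon-1}^\sim$, $b_\epsilon^+b_\beta^-$ and $b_\beta^+b_\epsilon^-$, $b_{\epsilon+1,\beta-1}^\sim$ matches the paper's own verification of \eqref{Eq:A2}.
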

\begin{proof}
For the reader's convenience, we prove this case.
It suffices to use the Leibniz rules \eqref{Eq:Lei} for $\dgal{-,-}$ before applying the multiplication map.
Thus,
$$\br{b,c}=\sum_\beta \mult(\dgal{v_{j_\beta},c}' b_\beta^+ \otimes b_\beta^- \dgal{v_{j_\beta},c}'')
=\sum_\beta \dgal{v_{j_\beta},c}' b_\beta^+ b_\beta^- \dgal{v_{j_\beta},c}''\,.$$
Hence, we get
\begin{align*}
\br{a,\br{b,c}}=&  \sum_{\beta} \Big[
\br{a,\dgal{v_{j_\beta},c}'} b_\beta^+ b_\beta^- \dgal{v_{j_\beta},c}''
+\dgal{v_{j_\beta},c}' \br{a,b_\beta^+} b_\beta^- \dgal{v_{j_\beta},c}'' \\
&+\dgal{v_{j_\beta},c}' b_\beta^+ \br{a,b_\beta^-} \dgal{v_{j_\beta},c}''
+\dgal{v_{j_\beta},c}' b_\beta^+ b_\beta^- \br{a,\dgal{v_{j_\beta},c}''} \Big]\,,
\end{align*}
and these four terms will give \eqref{Eq:A1}--\eqref{Eq:A4}, respectively. To see this, note for example that in the second term we can use the following expansion:
\begin{align*}
\br{a,b_\beta^+}= \sum_\alpha \dgal{v_{i_\alpha},b_\beta^+}' a_\alpha^+ a_\alpha^- \dgal{v_{i_\alpha},b_\beta^+}''
= \sum_\alpha \sum_{\epsilon>\beta}  b_{\beta+1,\epsilon-1}^\sim \dgal{v_{i_\alpha},v_{j_\epsilon}}' a_\alpha^+ a_\alpha^- \dgal{v_{i_\alpha},v_{j_\epsilon}}'' b_\epsilon^+\,,
\end{align*}
since $b_\beta^+=b_{\beta+1,\epsilon-1}^\sim v_{j_\epsilon} b_\epsilon^+$.
\end{proof}

Exchanging the roles of $a$ and $b$ in the previous lemma, we find:
\begin{subequations} \label{Eq:br-B}
 \begin{align}
\br{b,\br{a,c}}=& \
\sum_{\alpha=1}^r \sum_{\beta=1}^s \dgal{v_{j_\beta} , \dgal{v_{i_\alpha},c}'}'  b_\beta^+ b_\beta^-
\dgal{v_{j_\beta} , \dgal{v_{i_\alpha},c}'}'' a_\alpha^+ a_\alpha^-  \dgal{v_{i_\alpha},c}'' \label{Eq:B1} \\
&+\sum_{\substack{\alpha,\gamma=1 \\ \alpha<\gamma}}^r \sum_{\beta=1}^s
\dgal{v_{i_\alpha},c}' a_{\alpha+1,\gamma-1}^\sim \dgal{v_{j_\beta} , v_{i_\gamma}}' b_\beta^+ b_\beta^-
\dgal{v_{j_\beta} , v_{i_\gamma}}''  a_\gamma^+ a_\alpha^-  \dgal{v_{i_\alpha},c}''  \label{Eq:B2} \\
&+\sum_{\substack{\alpha,\gamma=1 \\ \alpha>\gamma}}^r \sum_{\beta=1}^s
\dgal{v_{i_\alpha},c}'   a_\alpha^+ a_\gamma^- \dgal{v_{j_\beta} , v_{i_\gamma}}' b_\beta^+ b_\beta^-
\dgal{v_{j_\beta} , v_{i_\gamma}}''  a_{\gamma+1,\alpha-1}^\sim  \dgal{v_{i_\alpha},c}''  \label{Eq:B3} \\
&+
\sum_{\alpha=1}^r \sum_{\beta=1}^s \dgal{v_{i_\alpha},c}' a_\alpha^+ a_\alpha^-
\dgal{v_{j_\beta} , \dgal{v_{i_\alpha},c}''}' b_\beta^+ b_\beta^-  \dgal{v_{j_\beta} , \dgal{v_{i_\alpha},c}''}''\,.\label{Eq:B4}
 \end{align}
\end{subequations}

\begin{lem}
The following holds:
\begin{subequations} \label{Eq:br-C}
 \begin{align}
\br{\br{a,b},c}=& \
 \sum_{\alpha=1}^r \sum_{\substack{\beta,\epsilon=1 \\ \epsilon<\beta}}^s
\dgal{v_{j_\epsilon},c}' b_{\epsilon+1,\beta-1}^\sim \dgal{v_{i_\alpha} , v_{j_\beta}}' a_\alpha^+ a_\alpha^-
\dgal{v_{i_\alpha} , v_{j_\beta}}''  b_\beta^+ b_\epsilon^-  \dgal{v_{j_\epsilon},c}''  \label{Eq:C1} \\
&+
\sum_{\alpha=1}^r \sum_{\beta=1}^s \dgal{\dgal{v_{i_\alpha} , v_{j_\beta}}',c}' a_\alpha^+ a_\alpha^-
\dgal{v_{i_\alpha} , v_{j_\beta}}'' b_\beta^+ b_\beta^-  \dgal{\dgal{v_{i_\alpha} , v_{j_\beta}}',c}''\label{Eq:C2} \\
&+\sum_{\substack{\alpha,\gamma=1 \\ \gamma>\alpha}}^s \sum_{\beta=1}^s
\dgal{v_{i_\gamma},c}'   a_\gamma^+ a_\alpha^- \dgal{v_{i_\alpha} , v_{j_\beta}}'' b_\beta^+ b_\beta^-
\dgal{v_{i_\alpha} , v_{j_\beta}}'  a_{\alpha+1,\gamma-1}^\sim  \dgal{v_{i_\gamma},c}''  \label{Eq:C3} \\
&+\sum_{\substack{\alpha,\gamma=1 \\ \gamma<\alpha}}^s \sum_{\beta=1}^s
\dgal{v_{i_\gamma},c}' a_{\gamma+1,\alpha-1}^\sim  \dgal{v_{i_\alpha} , v_{j_\beta}}'' b_\beta^+ b_\beta^-
\dgal{v_{i_\alpha} , v_{j_\beta}}'  a_\alpha^+ a_\gamma^-   \dgal{v_{i_\gamma},c}''  \label{Eq:C4} \\
&+
\sum_{\alpha=1}^r \sum_{\beta=1}^s \dgal{\dgal{v_{i_\alpha} , v_{j_\beta}}'',c}'  b_\beta^+ b_\beta^-
\dgal{v_{i_\alpha} , v_{j_\beta}}' a_\alpha^+ a_\alpha^- \dgal{\dgal{v_{i_\alpha} , v_{j_\beta}}'',c}''\label{Eq:C5} \\
&+
 \sum_{\alpha=1}^r \sum_{\substack{\beta,\epsilon=1 \\ \epsilon>\beta}}^s
\dgal{v_{j_\epsilon},c}' b_\epsilon^+ b_\beta^- \dgal{v_{i_\alpha} , v_{j_\beta}}' a_\alpha^+ a_\alpha^-
\dgal{v_{i_\alpha} , v_{j_\beta}}''  b_{\beta+1,\epsilon-1}^\sim   \dgal{v_{j_\epsilon},c}'' \,. \label{Eq:C6}
 \end{align}
\end{subequations}
\end{lem}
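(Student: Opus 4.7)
The plan is to prove the expansion of $\br{\br{a,b},c}$ in two stages. First I would expand $\br{a,b}$ explicitly from the Leibniz rules \eqref{Eq:Lei}, obtaining
\begin{equation*}
 \br{a,b}=\sum_{\alpha=1}^{r}\sum_{\beta=1}^{s} b_\beta^- \, \dgal{v_{i_\alpha},v_{j_\beta}}' \, a_\alpha^+ \, a_\alpha^- \, \dgal{v_{i_\alpha},v_{j_\beta}}'' \, b_\beta^+\,,
\end{equation*}
which is a sum of products of six factors. Second, I would apply $\br{-,c}$ to each such product. The key computational tool is the observation, obtained by iterating \eqref{Eq:Lei2} and then applying $\mult$, that for any word $P=p_1\cdots p_k$ one has
\begin{equation*}
\br{P,c}=\sum_{i=1}^{k} \dgal{p_i,c}'\, p_{i+1}\cdots p_k\, p_1\cdots p_{i-1}\, \dgal{p_i,c}''\,,
\end{equation*}
so that the remaining factors appear cyclically rotated around $\dgal{p_i,c}'$ and $\dgal{p_i,c}''$.

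Next, I would apply this identity to each of the six factors of the typical summand of $\br{a,b}$, in the order $p_1=b_\beta^-$, $p_2=\dgal{v_{i_\alpha},v_{j_\beta}}'$, $p_3=a_\alpha^+$, $p_4=a_\alpha^-$, $p_5=\dgal{v_{i_\alpha},v_{j_\beta}}''$, $p_6=b_\beta^+$, and match the six resulting contributions to \eqref{Eq:C1}--\eqref{Eq:C6}. For factors $p_1$ and $p_6$, a second application of \eqref{Eq:Lei2} singles out a generator $v_{j_\epsilon}$ with $\epsilon<\beta$ respectively $\epsilon>\beta$, producing \eqref{Eq:C1} and \eqref{Eq:C6} with the intermediate strings $b_{\epsilon+1,\beta-1}^\sim$ and $b_{\beta+1,\epsilon-1}^\sim$. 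Likewise, $p_3$ and $p_4$ produce \eqref{Eq:C3} and \eqref{Eq:C4} by picking out a $v_{i_\gamma}$ with $\gamma>\alpha$ respectively $\gamma<\alpha$. The factors $p_2$ and $p_5$ are treated as atomic: no further Leibniz expansion is applied, and the resulting iterated brackets $\dgal{\dgal{v_{i_\alpha},v_{j_\beta}}',c}$ and $\dgal{\dgal{v_{i_\alpha},v_{j_\beta}}'',c}$ yield \eqref{Eq:C2} and \eqref{Eq:C5} verbatim after performing the cyclic rotation described above.

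The main obstacle is the bookkeeping in Step 2: correctly placing, for each of the six factors, the five remaining pieces in the appropriate cyclic order between $\dgal{v_k,c}'$ and $\dgal{v_k,c}''$, and correctly expressing the leftover subwords of $a$ or $b$ using the $(-)_{\alpha}^{\pm}$ and $(-)_{\alpha,\gamma}^\sim$ notation of \eqref{Eq:Not}. A careful index check, paying attention to the split of $b_\beta^-$ into $b_\epsilon^-$, $v_{j_\epsilon}$, $b_{\epsilon+1,\beta-1}^\sim$ (and analogously for the other factors), suffices to confirm that the six contributions assemble precisely into \eqref{Eq:C1}--\eqref{Eq:C6}.
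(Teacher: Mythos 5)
Your proposal is correct and follows essentially the same route as the paper, which simply invokes a direct computation starting from the expansion $\br{a,b}=\sum_{\alpha,\beta} b_\beta^- \dgal{v_{i_\alpha},v_{j_\beta}}' a_\alpha^+ a_\alpha^- \dgal{v_{i_\alpha},v_{j_\beta}}'' b_\beta^+$. Your cyclic-rotation formula for $\br{P,c}$ is a valid consequence of iterating \eqref{Eq:Lei2} and applying $\mult$, and the six resulting contributions do match \eqref{Eq:C1}--\eqref{Eq:C6} exactly as you describe.
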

\begin{proof}
Direct computation using $\br{a,b}=\sum_{\alpha,\beta} b_\beta^- \dgal{v_{i_\alpha} , v_{j_\beta}}' a_\alpha^+ a_\alpha^- \dgal{v_{i_\alpha} , v_{j_\beta}}'' b_\beta^+$.
\end{proof}

The next result requires further assumptions.
\begin{lem}
Assume that $(A,\dgal{-,-})$ is a mixed double Poisson algebra of weight $\underline{\lambda}\in \K^d$.
For $1\leq i,j\leq d$ and $c\in A$, we have
\begin{equation} \label{Eq:Prep1}
 \begin{aligned}
&-\dgal{v_j,\dgal{v_i,c}}_L + \dgal{v_i,\dgal{v_j,c}}_R - \dgal{\dgal{v_i,v_j}^\circ,c}_L \\
=& \frac{\lambda_j+\lambda_i}{2}\, v_j \otimes_1 \dgal{v_i,c} + \frac{\lambda_j-\lambda_i}{2}\, 1\otimes_1 (\dgal{v_i,c} \ast v_j)\,.
 \end{aligned}
\end{equation}
\end{lem}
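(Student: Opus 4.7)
The plan is to recognise the left-hand side of \eqref{Eq:Prep1} as (minus) the double Jacobiator with $i$ and $j$ interchanged, corrected by a term that measures the failure of cyclic skew-symmetry of $\dgal{-,-}$ at the pair $(v_i,v_j)$. By the definition \eqref{Eq:DJac},
\begin{equation*}
\DJac(v_j,v_i,c)=\dgal{v_j,\dgal{v_i,c}}_L-\dgal{v_i,\dgal{v_j,c}}_R-\dgal{\dgal{v_j,v_i},c}_L,
\end{equation*}
so the left-hand side of \eqref{Eq:Prep1} rewrites as
\begin{equation*}
-\DJac(v_j,v_i,c)\,-\,\dgal{\dgal{v_j,v_i}+\dgal{v_i,v_j}^\circ,\,c}_L\,.
\end{equation*}
I would first invoke the Poisson property \eqref{Eq:DJac-c} with indices swapped to expand the first summand, and then use the mixed double algebra identity \eqref{Eq:intr1} (again with $(i,j)$ swapped) to rewrite $\dgal{v_j,v_i}+\dgal{v_i,v_j}^\circ$ as an explicit combination of $v_j\otimes v_i-v_i\otimes v_j$, $1\otimes v_jv_i$ and $v_iv_j\otimes 1$.

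The second step is to apply $\dgal{-,c}_L$ termwise to this explicit combination. The contribution of $1\otimes v_jv_i$ vanishes because $\dgal{1,c}=0$, while the contribution of $v_iv_j\otimes 1$ is unpacked via the Leibniz rule \eqref{Eq:Lei2} for $\dgal{v_iv_j,c}$. Writing $\dgal{v_i,c}=y'\otimes y''$ and $\dgal{v_j,c}=x'\otimes x''$ in Sweedler-type notation, the whole expression becomes a short sum of elementary tensors in $A^{\otimes 3}$. A direct check will show that two pairs of them cancel pointwise, leaving precisely $\frac{\lambda_j+\lambda_i}{2}\,y'\otimes v_j\otimes y''$ and $\frac{\lambda_j-\lambda_i}{2}\,y'v_j\otimes 1\otimes y''$, which reassemble into $\frac{\lambda_j+\lambda_i}{2}\,v_j\otimes_1\dgal{v_i,c}$ and $\frac{\lambda_j-\lambda_i}{2}\,1\otimes_1(\dgal{v_i,c}\ast v_j)$, i.e.\ the right-hand side of \eqref{Eq:Prep1}.

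The hard part is purely clerical: one must track carefully the $\otimes_1$-insertions (so that each tensor lands in the intended slot of $A^{\otimes 3}$) and the sign flips produced by interchanging $i$ and $j$, in particular the identification $\frac{\lambda_i-\lambda_j}{2}=-\frac{\lambda_j-\lambda_i}{2}$ and the distinction between the outer- and inner-bimodule actions hidden in $v_i\otimes_1\dgal{v_j,c}$ versus $1\otimes_1(v_i\ast\dgal{v_j,c})$. No further hypothesis on $c$ is needed, which is why the identity holds for arbitrary $c\in A$.
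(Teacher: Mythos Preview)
Your proposal is correct and follows essentially the same route as the paper: rewrite the left-hand side as $-\DJac(v_j,v_i,c)-\dgal{\dgal{v_j,v_i}+\dgal{v_i,v_j}^\circ,c}_L$, apply \eqref{Eq:intr1} (with $i,j$ swapped) to the second summand, expand $\dgal{-,c}_L$ via the Leibniz rule, and then use \eqref{Eq:DJac-c} to cancel the terms involving $\dgal{v_j,c}$. The only cosmetic difference is that the paper expresses the cancellation as ``the first, second and fourth terms cancel out'' (treating $-\DJac(v_j,v_i,c)$ as a single term), whereas you expand it into its two pieces first and then cancel in pairs.
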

\begin{proof}
We can write the left-hand side of \eqref{Eq:Prep1} as
\begin{align*}
&-\DJac(v_j,v_i,c)
- \frac{\lambda_j+\lambda_i}{2} \dgal{v_j \otimes v_i - v_i \otimes v_j,c}_L
+ \frac{\lambda_j-\lambda_i}{2} \dgal{v_i v_j  \otimes 1 ,c}_L \\
=&-\DJac(v_j,v_i,c)
- \frac{\lambda_j+\lambda_i}{2}\, v_i  \otimes_1 \dgal{v_j,c} + \frac{\lambda_j+\lambda_i}{2}\, v_j \otimes_1 \dgal{ v_i,c} \\
&+ \frac{\lambda_j-\lambda_i}{2} 1\otimes_1 (v_i \ast \dgal{v_j   ,c} )
+ \frac{\lambda_j-\lambda_i}{2} 1\otimes_1 (\dgal{v_i,c} \ast v_j)\,,
\end{align*}
where we used \eqref{Eq:intr1} to obtain the first line, and then the Leibniz rules \eqref{Eq:Lei}.
Due to \eqref{Eq:DJac-c}, the first, second and fourth term cancel out.
\end{proof}

\subsection{Proof of Proposition \ref{Pr:Jac}} \label{ss:PfJac}

We adapt the proof of \cite[Thm.~9]{GG22} to our more general setting.
For $a,b,c\in A$, we want to check that \eqref{Eq:Jac} holds. By linearity, we can assume that $a,b$ are of the form \eqref{Eq:abForm}. Then, the Jacobi identity \eqref{Eq:Jac} amounts to checking
\begin{equation*}
 \eqref{Eq:br-A} - \eqref{Eq:br-B} - \eqref{Eq:br-C} = 0\,.
\end{equation*}
We directly see that $\eqref{Eq:A2}-\eqref{Eq:C1}=0$ and $\eqref{Eq:A3}-\eqref{Eq:C6}=0$. Next, by introducing
\begin{equation*}
 \mult_3:A\otimes A \otimes A \to A, \quad a_1 \otimes a_2 \otimes a_3 \mapsto  a_1  a_2 a_3\,,
\end{equation*}
and recalling \eqref{Eq:dbr3}, we can write
\begin{align*}
 \eqref{Eq:A1}&=\sum_{\alpha=1}^r \sum_{\beta=1}^s \mult_3 \left(
(1\otimes a_\alpha^+ a_\alpha^- \otimes b_\beta^+ b_\beta^-) \ \dgal{v_{i_\alpha} , \dgal{v_{j_\beta},c}}_L
\right)\,, \\
 \eqref{Eq:B4}&=\sum_{\alpha=1}^r \sum_{\beta=1}^s \mult_3 \left(
(1\otimes a_\alpha^+ a_\alpha^- \otimes b_\beta^+ b_\beta^-) \ \dgal{v_{j_\beta} , \dgal{v_{i_\alpha},c}}_R
\right)\,, \\
 \eqref{Eq:C2}&=\sum_{\alpha=1}^r \sum_{\beta=1}^s \mult_3 \left(
(1\otimes a_\alpha^+ a_\alpha^- \otimes b_\beta^+ b_\beta^-) \ \dgal{\dgal{v_{i_\alpha} , v_{j_\beta}},c}_L
\right)\,.
\end{align*}
Thus, \eqref{Eq:DJac} and the condition \eqref{Eq:DJac-c} yield
 \begin{align}
&\eqref{Eq:A1}-\eqref{Eq:B4}-\eqref{Eq:C2}=
\sum_{\alpha=1}^r \sum_{\beta=1}^s \mult_3 \left(
(1\otimes a_\alpha^+ a_\alpha^- \otimes b_\beta^+ b_\beta^-) \ \DJac(v_{i_\alpha} , v_{j_\beta},c) \right) \nonumber \\
=&-\sum_{\alpha=1}^r \sum_{\beta=1}^s \frac{\lambda_{i_\alpha}+\lambda_{j_\beta}}{2} \mult_3 \left(
(1\otimes a_\alpha^+ a_\alpha^- \otimes b_\beta^+ b_\beta^-)
(\dgal{v_{i_\alpha},c}' \otimes v_{j_\beta}\otimes \dgal{v_{i_\alpha},c}'') \right)  \nonumber \\
&+\sum_{\alpha=1}^r \sum_{\beta=1}^s \frac{\lambda_{i_\alpha}-\lambda_{j_\beta}}{2} \ \mult_3 \left(
(1\otimes a_\alpha^+ a_\alpha^- \otimes b_\beta^+ b_\beta^-)
(\dgal{v_{i_\alpha},c}' \otimes 1\otimes v_{j_\beta}\dgal{v_{i_\alpha},c}'') \right) \nonumber \\
=&-\sum_{\alpha=1}^r \sum_{\beta=1}^s \frac{ \lambda_{j_\beta}+\lambda_{j_{\beta+1}} }{2} \
\dgal{v_{i_\alpha},c}' a_\alpha^+ a_\alpha^- b_\beta^+ b_\beta^- v_{j_\beta}  \dgal{v_{i_\alpha},c}''\,. \label{Eq:A1B4C2}
 \end{align}
To obtain the last equality, we used
\begin{equation} \label{Eq:vbb}
 v_{j_\beta} b_\beta^+ b_\beta^- = b_{\beta-1}^+ b_{\beta-1}^- v_{j_{\beta-1}}\,,
\end{equation}
to  sum over $\beta-1$ in the first sum. Here and below, the index $\beta$ is understood modulo $s$, and the identity \eqref{Eq:vbb} holds with $\beta=1$ since $v_{j_1}b_1^+=b=b_{s}^- v_{j_s}$; we do the same with the indices  $\alpha,\gamma$ modulo $r$.
With the same reasoning, we can write
\begin{align*}
 \eqref{Eq:A4}&=\sum_{\alpha=1}^r \sum_{\beta=1}^s \mult_3 \left(
(1\otimes b_\beta^+ b_\beta^- \otimes a_\alpha^+ a_\alpha^- ) \ \dgal{v_{i_\alpha} , \dgal{v_{j_\beta},c}}_R
\right)\,, \\
 \eqref{Eq:B1}&=\sum_{\alpha=1}^r \sum_{\beta=1}^s \mult_3 \left(
(1\otimes b_\beta^+ b_\beta^- \otimes a_\alpha^+ a_\alpha^- ) \ \dgal{v_{j_\beta} , \dgal{v_{i_\alpha},c}}_L
\right)\,, \\
 \eqref{Eq:C5}&=\sum_{\alpha=1}^r \sum_{\beta=1}^s \mult_3 \left(
(1 \otimes b_\beta^+ b_\beta^- \otimes a_\alpha^+ a_\alpha^- ) \ \dgal{\dgal{v_{i_\alpha} , v_{j_\beta}}^\circ,c}_L
\right)\,.
\end{align*}
Therefore, we obtain from \eqref{Eq:Prep1} that
\begin{align}
& \eqref{Eq:A4} - \eqref{Eq:B1} - \eqref{Eq:C5}   \nonumber \\
=&\sum_{\alpha=1}^r \sum_{\beta=1}^s \frac{\lambda_{j_\beta}+\lambda_{i_\alpha}}{2} \mult_3 \left(
(1\otimes b_\beta^+ b_\beta^- \otimes a_\alpha^+ a_\alpha^- ) \ (\dgal{v_{i_\alpha} ,c}' \otimes v_{j_\beta} \otimes \dgal{v_{i_\alpha} ,c}'')
\right) \nonumber \\
&+\sum_{\alpha=1}^r \sum_{\beta=1}^s \frac{\lambda_{j_\beta}-\lambda_{i_\alpha}}{2} \mult_3 \left(
(1\otimes b_\beta^+ b_\beta^- \otimes a_\alpha^+ a_\alpha^- ) \ (\dgal{v_{i_\alpha} ,c}' v_{j_\beta}  \otimes 1 \otimes \dgal{v_{i_\alpha} ,c}'')
\right)\nonumber \\
=& \sum_{\alpha=1}^r \sum_{\beta=1}^s \frac{ \lambda_{j_\beta}+\lambda_{j_{\beta+1}} }{2} \
\dgal{v_{i_\alpha} ,c}' b_\beta^+ b_\beta^- v_{j_\beta} a_\alpha^+ a_\alpha^- \dgal{v_{i_\alpha} ,c}'' \,,  \label{Eq:A4B1C5}
\end{align}
where we used \eqref{Eq:vbb}. Thus, our aim reduces to checking
\begin{equation*}
\eqref{Eq:A1B4C2}+\eqref{Eq:A4B1C5}-\eqref{Eq:B2}-\eqref{Eq:B3}-\eqref{Eq:C3}-\eqref{Eq:C4} = 0\,.
\end{equation*}

\begin{lem} \label{Lem:B2C4}
 The following holds:
\begin{subequations}
 \begin{align}
-\eqref{Eq:B2}-\eqref{Eq:C4}=&
-\sum_{\alpha=1}^r \sum_{\beta=1}^s \frac{ \lambda_{j_\beta}+\lambda_{j_{\beta+1}} }{2} \
\dgal{v_{i_\alpha} ,c}' b_\beta^+ b_\beta^- v_{j_\beta} a_\alpha^+ a_\alpha^- \dgal{v_{i_\alpha} ,c}'' \label{Eq:E1}  \\
&+\sum_{\alpha=1}^r \sum_{\beta=1}^s \frac{ \lambda_{j_\beta}+\lambda_{j_{\beta+1}} }{2} \
\dgal{v_{i_\alpha} ,c}' a_\alpha^+ b_\beta^+ b_\beta^- v_{j_\beta}  a_\alpha^- \dgal{v_{i_\alpha} ,c}''\,.  \label{Eq:E2}
 \end{align}
\end{subequations}
\end{lem}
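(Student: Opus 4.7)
The strategy is to combine \eqref{Eq:B2} and \eqref{Eq:C4} using the mixed double algebra relation \eqref{Eq:intr1}, and then recognise the resulting expression as the telescoping commutator expansion of $\eqref{Eq:E1}+\eqref{Eq:E2}$.

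First I would swap the dummy indices $\alpha\leftrightarrow\gamma$ in \eqref{Eq:C4} so that both sums range over $\alpha<\gamma$ with identical outer factors $\dgal{v_{i_\alpha},c}'\,a_{\alpha+1,\gamma-1}^\sim(\cdots)a_\gamma^+a_\alpha^-\,\dgal{v_{i_\alpha},c}''$. The middle piece of $-\eqref{Eq:B2}-\eqref{Eq:C4}$ is then obtained by sandwiching $b_\beta^+b_\beta^-$ between the two tensor factors of $-\bigl(\dgal{v_{j_\beta},v_{i_\gamma}}+\dgal{v_{i_\gamma},v_{j_\beta}}^\circ\bigr)$, so that \eqref{Eq:intr1} applies directly and produces four terms.

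Two of these terms contain the patterns $v_{j_\beta}b_\beta^+b_\beta^-$ and $b_\beta^+b_\beta^-v_{j_\beta}$, which I would rewrite using \eqref{Eq:vbb} and its mirror image $b_\beta^+b_\beta^-v_{j_\beta}=v_{j_{\beta+1}}b_{\beta+1}^+b_{\beta+1}^-$. After shifting the summation variable by $\beta\mapsto\beta\pm 1$ (mod $s$), the four terms pair off into two, and the $\pm\tfrac{\lambda_{i_\gamma}}{2}$ parts of the coefficients cancel, leaving the uniform weight $\tfrac{\lambda_{j_\beta}+\lambda_{j_{\beta+1}}}{2}$. The outcome is
\begin{equation*}
-\eqref{Eq:B2}-\eqref{Eq:C4}=\sum_{\substack{\alpha,\gamma=1\\\alpha<\gamma}}^r\sum_{\beta=1}^s \frac{\lambda_{j_\beta}+\lambda_{j_{\beta+1}}}{2}\, \dgal{v_{i_\alpha},c}'\,a_{\alpha+1,\gamma-1}^\sim\,[v_{i_\gamma},X_\beta]\,a_\gamma^+a_\alpha^-\,\dgal{v_{i_\alpha},c}''\,,
\end{equation*}
where $X_\beta:=b_\beta^+b_\beta^-v_{j_\beta}$ and $[\cdot,\cdot]$ denotes the commutator in $A$.

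Finally, I would invoke the telescoping identity $a_\alpha^+ X-X a_\alpha^+=\sum_{\gamma=\alpha+1}^r a_{\alpha+1,\gamma-1}^\sim[v_{i_\gamma},X]a_\gamma^+$, valid for any $X\in A$, which is just the commutator expansion along the monomial $a_\alpha^+=v_{i_{\alpha+1}}\cdots v_{i_r}$. Applying it to $X=X_\beta$ and right-multiplying by $a_\alpha^-$ collapses the inner sum over $\gamma$ into $a_\alpha^+X_\beta a_\alpha^--X_\beta a_\alpha^+a_\alpha^-$, converting the previous display into exactly $\eqref{Eq:E1}+\eqref{Eq:E2}$. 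The main obstacle is the bookkeeping of $\beta$-indices: one must identify which two of the four terms produced by \eqref{Eq:intr1} need to be reindexed and in which direction so that the $\lambda_{i_\gamma}$-contributions cancel and the common factor $\tfrac{\lambda_{j_\beta}+\lambda_{j_{\beta+1}}}{2}$ emerges.
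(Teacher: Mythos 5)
Your proposal is correct and follows essentially the same route as the paper: you apply \eqref{Eq:intr1} to the $b_\beta^+ b_\beta^-$-sandwich of $\dgal{v_{j_\beta},v_{i_\gamma}}+\dgal{v_{i_\gamma},v_{j_\beta}}^\circ$ (after swapping $\alpha\leftrightarrow\gamma$ in \eqref{Eq:C4}), then reindex $\beta$ cyclically via \eqref{Eq:vbb} so that the $\lambda_{i_\gamma}$-contributions cancel and the coefficient $\tfrac{\lambda_{j_\beta}+\lambda_{j_{\beta+1}}}{2}$ appears, exactly as in the paper's treatment of \eqref{Eq:B2a}--\eqref{Eq:B2d}. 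Your only departure is presentational but clean: writing the remaining $\gamma$-sum as the commutator expansion $a_\alpha^+X-Xa_\alpha^+=\sum_{\gamma=\alpha+1}^{r}a_{\alpha+1,\gamma-1}^\sim[v_{i_\gamma},X]a_\gamma^+$ collapses in one stroke what the paper does by splitting off the boundary cases $\gamma=\alpha+1$ and $\gamma=r$, and it automatically takes care of the paper's final remark that the $\alpha=r$ terms can be included since $a_r^+=1$.
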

\begin{proof}
Notice that \eqref{Eq:intr1} entails
\begin{subequations}
 \begin{align}
\eqref{Eq:B2}&+\eqref{Eq:C4}=
\sum_{\substack{\alpha,\gamma=1 \\ \alpha<\gamma}}^r \sum_{\beta=1}^s
\dgal{v_{i_\alpha},c}' a_{\alpha+1,\gamma-1}^\sim
\Big( \dgal{v_{j_\beta} , v_{i_\gamma}}' b_\beta^+ b_\beta^- \dgal{v_{j_\beta} , v_{i_\gamma}}'' \nonumber \\
& + \dgal{v_{i_\gamma},v_{j_\beta}}'' b_\beta^+ b_\beta^- \dgal{v_{i_\gamma},v_{j_\beta}}'\Big)
a_\gamma^+ a_\alpha^-  \dgal{v_{i_\alpha},c}'' \nonumber \\
=& \ \sum_{\substack{\alpha,\gamma=1 \\ \alpha<\gamma}}^r \sum_{\beta=1}^s
\frac{ \lambda_{j_\beta}+\lambda_{i_\gamma} }{2}  \ \dgal{v_{i_\alpha},c}' a_{\alpha+1,\gamma-1}^\sim
v_{j_\beta} b_\beta^+ b_\beta^- v_{i_\gamma} a_\gamma^+ a_\alpha^-  \dgal{v_{i_\alpha},c}'' \label{Eq:B2a} \\
&- \sum_{\substack{\alpha,\gamma=1 \\ \alpha<\gamma}}^r \sum_{\beta=1}^s
\frac{ \lambda_{j_\beta}+\lambda_{i_\gamma} }{2}  \ \dgal{v_{i_\alpha},c}' a_{\alpha+1,\gamma-1}^\sim
v_{i_\gamma} b_\beta^+ b_\beta^- v_{j_\beta} a_\gamma^+ a_\alpha^-  \dgal{v_{i_\alpha},c}'' \label{Eq:B2b} \\
&+ \sum_{\substack{\alpha,\gamma=1 \\ \alpha<\gamma}}^r \sum_{\beta=1}^s
\frac{ \lambda_{j_\beta}-\lambda_{i_\gamma} }{2}  \ \dgal{v_{i_\alpha},c}' a_{\alpha+1,\gamma-1}^\sim
 b_\beta^+ b_\beta^- v_{j_\beta} v_{i_\gamma} a_\gamma^+ a_\alpha^-  \dgal{v_{i_\alpha},c}'' \label{Eq:B2c} \\
&- \sum_{\substack{\alpha,\gamma=1 \\ \alpha<\gamma}}^r \sum_{\beta=1}^s
\frac{ \lambda_{j_\beta}-\lambda_{i_\gamma} }{2}  \ \dgal{v_{i_\alpha},c}' a_{\alpha+1,\gamma-1}^\sim
v_{i_\gamma} v_{j_\beta} b_\beta^+ b_\beta^-  a_\gamma^+ a_\alpha^-  \dgal{v_{i_\alpha},c}'' \label{Eq:B2d}\,.
\end{align}
\end{subequations}
We can write thanks to \eqref{Eq:vbb}
\begin{align*}
& \eqref{Eq:B2a}+\eqref{Eq:B2c}=
\sum_{\substack{\alpha,\gamma=1 \\ \alpha<\gamma}}^r \sum_{\beta=1}^s
\frac{ \lambda_{j_\beta}+\lambda_{j_{\beta+1}} }{2} \ \dgal{v_{i_\alpha},c}' a_{\alpha+1,\gamma-1}^\sim
 b_\beta^+ b_\beta^- v_{j_\beta} v_{i_\gamma} a_\gamma^+ a_\alpha^-  \dgal{v_{i_\alpha},c}'' \\
&= \sum_{\alpha=1}^{r-1} \sum_{\beta=1}^s
\frac{ \lambda_{j_\beta}+\lambda_{j_{\beta+1}} }{2} \ \dgal{v_{i_\alpha},c}'
 b_\beta^+ b_\beta^- v_{j_\beta} a_\alpha^+ a_\alpha^-  \dgal{v_{i_\alpha},c}'' \\
&\quad +\sum_{\alpha=1}^{r-1} \sum_{\gamma=\alpha+1}^{r-1} \sum_{\beta=1}^s
\frac{ \lambda_{j_\beta}+\lambda_{j_{\beta+1}} }{2} \ \dgal{v_{i_\alpha},c}' a_{\alpha+1,\gamma}^\sim
 b_\beta^+ b_\beta^- v_{j_\beta} a_\gamma^+ a_\alpha^-  \dgal{v_{i_\alpha},c}'' \,,
\end{align*}
where we separated the case $\gamma=\alpha+1$ from the rest (which is then summed over $\gamma-1$) to get the second equality.
Similarly,
\begin{align*}
& \eqref{Eq:B2b}+\eqref{Eq:B2d}=
- \sum_{\substack{\alpha,\gamma=1 \\ \alpha<\gamma}}^r \sum_{\beta=1}^s
\frac{ \lambda_{j_\beta}+\lambda_{j_{\beta+1}} }{2} \ \dgal{v_{i_\alpha},c}' a_{\alpha+1,\gamma}^\sim
\, b_\beta^+ b_\beta^- v_{j_\beta} a_\gamma^+ a_\alpha^-  \dgal{v_{i_\alpha},c}'' \\
&= - \sum_{\alpha=1}^{r-1} \sum_{\beta=1}^s
\frac{ \lambda_{j_\beta}+\lambda_{j_{\beta+1}} }{2}\ \dgal{v_{i_\alpha},c}' a_{\alpha}^+
 b_\beta^+ b_\beta^- v_{j_\beta}  a_\alpha^-  \dgal{v_{i_\alpha},c}'' \\
&\quad - \sum_{\alpha=1}^{r-1} \sum_{\gamma=\alpha+1}^{r-1} \sum_{\beta=1}^s
\frac{ \lambda_{j_\beta}+\lambda_{j_{\beta+1}} }{2} \ \dgal{v_{i_\alpha},c}' a_{\alpha+1,\gamma}^\sim
\, b_\beta^+ b_\beta^- v_{j_\beta} a_\gamma^+ a_\alpha^-  \dgal{v_{i_\alpha},c}''\,.
\end{align*}
Gathering these expressions, we get after cancellations

\begin{align*}
&-\eqref{Eq:B2}-\eqref{Eq:C4}=
-\sum_{\alpha=1}^{r-1} \sum_{\beta=1}^s \frac{ \lambda_{j_\beta}+\lambda_{j_{\beta+1}} }{2} \
\dgal{v_{i_\alpha} ,c}' b_\beta^+ b_\beta^- v_{j_\beta} a_\alpha^+ a_\alpha^- \dgal{v_{i_\alpha} ,c}''   \\
& \ +\sum_{\alpha=1}^{r-1} \sum_{\beta=1}^s \frac{ \lambda_{j_\beta}+\lambda_{j_{\beta+1}} }{2} \
\dgal{v_{i_\alpha} ,c}' a_\alpha^+ b_\beta^+ b_\beta^- v_{j_\beta}  a_\alpha^- \dgal{v_{i_\alpha} ,c}'' \,.
\end{align*}
We find precisely \eqref{Eq:E1}--\eqref{Eq:E2} if we can add $\alpha=r$ in the above sums.
This is true since $a_r^+=1$ entails
$b_\beta^+ b_\beta^- v_{j_\beta} a_r^+ a_r^- = a_r^+ b_\beta^+ b_\beta^- v_{j_\beta}  a_r^-$.
\end{proof}

\begin{lem} \label{Lem:B3C3}
 The following holds:
\begin{subequations}
 \begin{align}
-\eqref{Eq:B3}-\eqref{Eq:C3}=&
-\sum_{\alpha=1}^r \sum_{\beta=1}^s \frac{ \lambda_{j_\beta}+\lambda_{j_{\beta+1}} }{2} \
\dgal{v_{i_\alpha} ,c}' a_\alpha^+ b_\beta^+ b_\beta^- v_{j_\beta}  a_\alpha^- \dgal{v_{i_\alpha} ,c}'' \label{Eq:F1}  \\
&+\sum_{\alpha=1}^r \sum_{\beta=1}^s \frac{ \lambda_{j_\beta}+\lambda_{j_{\beta+1}} }{2} \
\dgal{v_{i_\alpha} ,c}' a_\alpha^+  a_\alpha^- b_\beta^+ b_\beta^- v_{j_\beta}  \dgal{v_{i_\alpha} ,c}''\,.  \label{Eq:F2}
 \end{align}
\end{subequations}
\end{lem}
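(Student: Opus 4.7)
The proof should follow the same template as Lemma~\ref{Lem:B2C4}, with the roles of the index inequality reversed: whereas that lemma combined \eqref{Eq:B2} and \eqref{Eq:C4} using the range $\alpha<\gamma$, here the relevant terms \eqref{Eq:B3} and \eqref{Eq:C3} both live in the range $\alpha>\gamma$ (after a harmless relabelling $\alpha\leftrightarrow\gamma$ inside \eqref{Eq:C3}). The plan is therefore to mirror the previous computation step by step.

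First I would rewrite \eqref{Eq:C3} by swapping the dummy indices $\alpha$ and $\gamma$, so that both \eqref{Eq:B3} and \eqref{Eq:C3} share the same outer factors $\dgal{v_{i_\alpha},c}' a_\alpha^+ a_\gamma^-\,(\cdots)\,a_{\gamma+1,\alpha-1}^\sim \dgal{v_{i_\alpha},c}''$, restricted to $\alpha>\gamma$. Adding them, the inner bracket becomes $\dgal{v_{j_\beta},v_{i_\gamma}}' b_\beta^+ b_\beta^- \dgal{v_{j_\beta},v_{i_\gamma}}'' + \dgal{v_{i_\gamma},v_{j_\beta}}'' b_\beta^+ b_\beta^- \dgal{v_{i_\gamma},v_{j_\beta}}'$, which is precisely the inner bimodule action of $b_\beta^+ b_\beta^-$ on the combination $\dgal{v_{j_\beta},v_{i_\gamma}} + \dgal{v_{i_\gamma},v_{j_\beta}}^\circ$. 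Applying \eqref{Eq:intr1} then splits $-\eqref{Eq:B3}-\eqref{Eq:C3}$ into four summands, two carrying the coefficient $\frac{\lambda_{j_\beta}+\lambda_{i_\gamma}}{2}$ (coming from the $v_{j_\beta}\otimes v_{i_\gamma} - v_{i_\gamma}\otimes v_{j_\beta}$ part) and two carrying $\frac{\lambda_{j_\beta}-\lambda_{i_\gamma}}{2}$ (coming from the $1\otimes v_{j_\beta}v_{i_\gamma} - v_{i_\gamma}v_{j_\beta}\otimes 1$ part).

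Next I would use the cyclic identity \eqref{Eq:vbb} to rewrite $v_{j_\beta} b_\beta^+ b_\beta^- = b_{\beta-1}^+ b_{\beta-1}^- v_{j_{\beta-1}}$ and re-index $\beta$ modulo $s$. Pairing the summand with prefactor $\frac{\lambda_{j_\beta}+\lambda_{i_\gamma}}{2}$ carrying a left $v_{j_\beta}$ with its companion carrying $\frac{\lambda_{j_\beta}-\lambda_{i_\gamma}}{2}$ and no $v_{j_\beta}$ on the left (and similarly on the right), the $\lambda_{i_\gamma}$ contributions cancel and the sum collapses to a single prefactor of the form $\frac{\lambda_{j_\beta}+\lambda_{j_{\beta+1}}}{2}$. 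This is exactly the mechanism used in Lemma~\ref{Lem:B2C4}.

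At this point one has a double sum over $\alpha>\gamma$ of an expression containing $a_\alpha^+ a_\gamma^- (\cdots) a_{\gamma+1,\alpha-1}^\sim$, where the factor inside is $b_\beta^+ b_\beta^- v_{j_\beta} v_{i_\gamma}$ (giving, after the shift, a contribution where $v_{j_\beta}$ stands to the left of $v_{i_\gamma}$) and, subtracted from it, one where $v_{i_\gamma}$ appears to the left of $b_\beta^+ b_\beta^- v_{j_\beta}$. I would then isolate the boundary case $\gamma=\alpha-1$ (so that $a_{\gamma+1,\alpha-1}^\sim=1$), which produces precisely the two surviving terms \eqref{Eq:F1} (with sign $-$) and \eqref{Eq:F2} (with sign $+$), after absorbing $v_{i_\gamma}=v_{i_{\alpha-1}}$ into $a_\alpha^-$ via $a_\gamma^- v_{i_\gamma}=a_\alpha^-$ and into $a_\alpha^+$ via $v_{i_\gamma}a_{\gamma+1,\alpha-1}^\sim =\ldots$. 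The remaining double sum (over $\gamma<\alpha-1$) cancels in the same telescoping manner as in the proof of Lemma~\ref{Lem:B2C4}. Finally, the boundary in $\alpha$ (the case $\alpha=1$, or equivalently the extension of the summation to all of $\{1,\ldots,r\}$) is unproblematic because $a_1^-=1$ makes the relevant factor trivial.

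The main obstacle, as in Lemma~\ref{Lem:B2C4}, is purely bookkeeping: keeping the modular shifts in $\beta$ consistent, correctly separating the $\gamma=\alpha-1$ boundary contribution from the rest, and checking that the expected $\lambda_{i_\gamma}$ coefficients truly cancel after the $\beta\mapsto\beta-1$ substitution. No new algebraic input beyond \eqref{Eq:intr1} and \eqref{Eq:vbb} is needed.
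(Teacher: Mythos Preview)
Your proposal is correct and follows exactly the approach the paper intends: the paper's own proof is simply ``Direct computation similar to Lemma~\ref{Lem:B2C4}'', and what you describe is precisely that computation with the inequality $\alpha<\gamma$ replaced by $\alpha>\gamma$. One small imprecision worth flagging: after the $\beta$-shift, the two blocks $(a)+(c)$ and $(b)+(d)$ differ by a unit shift in $\gamma$, so the telescoping leaves \emph{two} distinct boundary contributions, one from each end of the $\gamma$-range (namely $\gamma=\alpha-1$ for one block and $\gamma=1$ for the other), rather than both arising from $\gamma=\alpha-1$ as your wording suggests; the extension to $\alpha=1$ via $a_1^-=1$ then works exactly as you say.
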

\begin{proof}
Direct computation similar to Lemma \ref{Lem:B2C4}.
\end{proof}

We can now conclude since the remaining terms cancel out as follows:
\begin{equation*}
 \eqref{Eq:E2}+\eqref{Eq:F1}=0, \quad \eqref{Eq:A1B4C2}+\eqref{Eq:F2}=0, \quad \eqref{Eq:A4B1C5}+\eqref{Eq:E1}=0.
\end{equation*}


\section{Localisation} \label{S:Loc}

In this section, $A$ is arbitrary. This means for us that $A$ comes equipped with a projection
\begin{equation*}
 \pi : \K\langle v_1,\ldots,v_d\rangle \longrightarrow A
\end{equation*}
which provides $d$ generators for $A$. Of course, there may be relations given by elements in $\ker \pi$.
Assuming from now on that such a choice of generators is fixed and that we have a linear map
$\dgal{-,-}:A^{\otimes 2} \to A^{\otimes 2}$, we can talk about the pair $(A,\dgal{-,-})$ being a mixed double (Poisson) algebra of some weight $\underline{\lambda}\in \K^d$, by considering Definitions~\ref{Def:weiDA} and~\ref{Def:PwDA} with the ordered generators $v_1,\ldots,v_d$ provided by $\pi$.

Remark that the proofs of Propositions \ref{Pr:wsk} and \ref{Pr:Jac} only require the existence of generators $v_1,\ldots,v_d$ for $A$ such that \eqref{Eq:intr1} and \eqref{Eq:DJac-v} hold. In particular, these statements are valid for an arbitrary $\K$-algebra $A$.
Hence Theorem~\ref{Thm:MAIN} still holds, i.e. a mixed double Poisson algebra $(A,\dgal{-,-})$ is naturally equipped with a modified double bracket.

The next result allows to construct examples on algebras of noncommutative Laurent polynomials by localisation.
It relies on the fact that the Leibniz rules \eqref{Eq:Lei} allow to define $\dgal{-,-}$ on inverses: for $a\in A$ and for $b\in A$ invertible, we must have
\begin{equation} \label{Eq:Preloc}
 \dgal{a,b^{-1}}=-b^{-1} \cdot \dgal{a,b} \cdot b^{-1}\,, \quad
 \dgal{b^{-1},a}=-b^{-1} \ast \dgal{b,a} \ast b^{-1}\,,
\end{equation}
because $\dgal{a,1}=0=\dgal{1,a}$ by $\K$-linearity. (We use the notation \eqref{Eq:Bimod}).

\begin{thm} \label{Thm:loc}
Fix $I=(i_1,\ldots,i_r)$ a sequence of distinct indices in $\{1,\ldots,d\}$, such that $1\leq r \leq d$.
Let $A=\K \langle v_1,\ldots,v_d, v_{i_1}^{-1}, \ldots,v_{i_r}^{-1}\rangle$ be given by the projection
 \begin{equation} \label{Eq:mapPi}
\begin{aligned}
  \pi : \K\langle v_1,\ldots,v_d,v_{d+1},\ldots,v_{d+r}\rangle \to A\,, \quad
  \pi(v_j)=\left\{
\begin{array}{ll}
 v_j & j\leq d, \\
 v_{i_{\alpha}}^{-1} & j=d+\alpha, \  1\leq \alpha \leq r,
\end{array}
  \right.
\end{aligned}
\end{equation}
whose kernel is the two-sided ideal $(v_{i_{\alpha}}v_{d+\alpha}-1,\, v_{d+\alpha}v_{i_{\alpha}}-1 \mid 1\leq \alpha \leq r)$.

If $\dgal{-,-}$ is a mixed double (Poisson) algebra structure of weight $\underline{\lambda}=(\lambda_{1},\ldots,\lambda_{d})\in \K^d$ on $\K\langle v_1,\ldots,v_d\rangle$, then it uniquely extends to $A$ into such a structure of weight
\begin{align*}
 (\lambda_{1},\ldots,\lambda_{d},-\lambda_{i_1},\ldots,-\lambda_{i_r})\, \in \K^{d+r}\,.
\end{align*}
\end{thm}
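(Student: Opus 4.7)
The plan is to define $\dgal{-,-}$ on the localization $A$ using the formulas \eqref{Eq:Preloc}, extended by the Leibniz rules \eqref{Eq:Lei}, and then to verify the mixed double algebra condition \eqref{Eq:intr1} and (in the Poisson case) the Poisson property \eqref{Eq:DJac-v} on the enlarged generating set. Uniqueness is immediate: if the bracket extends while satisfying \eqref{Eq:Lei}, then applying the Leibniz rules to $\dgal{a, v_{i_\alpha} v_{d+\alpha}} = \dgal{a, 1} = 0$—and the analogous identity on the left—forces both formulas in \eqref{Eq:Preloc}, which then determines $\dgal{-,-}$ on all of $A \otimes A$ via \eqref{Eq:Lei}.

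For existence I would define the bracket on the remaining pairs $(v_j, v_{d+\alpha})$, $(v_{d+\alpha}, v_j)$, and $(v_{d+\alpha}, v_{d+\beta})$ via \eqref{Eq:Preloc}, extend to $A$ via Leibniz, and verify well-definedness on the quotient by the relations $v_{i_\alpha} v_{d+\alpha} - 1$ and $v_{d+\alpha} v_{i_\alpha} - 1$; the latter holds precisely because \eqref{Eq:Preloc} is the unique Leibniz-compatible choice. The weight condition \eqref{Eq:intr1} is then checked on each new pair. The prototypical case reads
$$ \dgal{v_j, v_{d+\alpha}} + \dgal{v_{d+\alpha}, v_j}^\circ = -(v_{d+\alpha} \otimes 1) \bigl(\dgal{v_j, v_{i_\alpha}} + \dgal{v_{i_\alpha}, v_j}^\circ \bigr) (1 \otimes v_{d+\alpha}), $$
and expanding the right-hand side via the assumed \eqref{Eq:intr1} at $(v_j, v_{i_\alpha})$ reproduces \eqref{Eq:intr1} at $(v_j, v_{d+\alpha})$ with weights $(\lambda_j, -\lambda_{i_\alpha})$: the conjugation by $v_{d+\alpha}$ interchanges the roles of the $(\lambda + \lambda')/2$- and $(\lambda - \lambda')/2$-terms and simultaneously flips the sign of $\lambda_{i_\alpha}$. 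The cases with two inverses, including the diagonal $\alpha = \beta$ (which reduces to cyclic skew-symmetry after a double conjugation), are entirely analogous. This already establishes the non-Poisson part of the theorem.

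For the Poisson property \eqref{Eq:DJac-v} I would proceed slot by slot. To handle an inverse generator $v_{d+\alpha}$ in the first position, I apply \eqref{Eq:DJac-DerNot} with $a_1 = v_{i_\alpha}$ and $a_2 = v_{d+\alpha}$; since $a_1 a_2 = 1$ gives $\DJac(1, -, -) = 0$, the identity expresses $\DJac(v_{d+\alpha}, b, c)$ in terms of the known $\DJac(v_{i_\alpha}, b, c)$ and a correction involving $\dgal{b, v_{i_\alpha}} + \dgal{v_{i_\alpha}, b}^\circ$, which is controlled via \eqref{Eq:intr1} from the previous step. The derivation rules of Lemma \ref{Lem:DJac} in the second and third arguments (which carry no correction terms) yield analogous reductions when $v_{d+\alpha}$ sits in those slots. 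Combining the three reductions with \eqref{Eq:Preloc}, the resulting expression matches the right-hand side of \eqref{Eq:DJac-v} under the weight assignment $\lambda_{d+\alpha} = -\lambda_{i_\alpha}$. The main obstacle is the sheer bookkeeping in this last step, especially when two or three arguments of $\DJac$ are inverse generators: Lemma \ref{Lem:DJac} must be applied iteratively and the bimodule manipulations on $A^{\otimes 3}$ proliferate; nevertheless, no new conceptual input beyond Lemma \ref{Lem:DJac}, \eqref{Eq:Preloc}, and the identities already established for pairs is needed.
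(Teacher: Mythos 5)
Your proposal is correct and follows essentially the same route as the paper: extension forced by \eqref{Eq:Preloc}/the Leibniz rules, verification of \eqref{Eq:intr1} by conjugating the known identity (with the weight sign flip $\lambda_{d+\alpha}=-\lambda_{i_\alpha}$ exchanging the two quadratic terms), and verification of \eqref{Eq:DJac-v} slot by slot using the derivation rules of Lemma \ref{Lem:DJac} in the last two arguments and \eqref{Eq:DJac-DerNot} with $a_1a_2=v_{i_\alpha}v_{d+\alpha}=1$ in the first. The correction term you identify there is exactly the one the paper controls via the already-established \eqref{Eq:intr1}, so no gap remains.
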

\begin{proof}
 We use the presentation given by \eqref{Eq:mapPi} for the proof, so that
$$\lambda_{d+\alpha}:=-\lambda_{i_\alpha}, \qquad 1\leq \alpha \leq r\,.$$
The operation $\dgal{-,-}$ uniquely extends to $A$ by the Leibniz rules \eqref{Eq:Lei} as (cf. \eqref{Eq:Preloc})
\begin{equation}
 \dgal{a,v_{d+\alpha}}=-v_{d+\alpha} \cdot \dgal{a,v_{i_\alpha}} \cdot v_{d+\alpha}\,, \quad
 \dgal{v_{d+\alpha},a}=-v_{d+\alpha} \ast \dgal{v_{i_\alpha},a} \ast v_{d+\alpha}\,. \label{Eq:Loc3}
\end{equation}

First, we need to check that \eqref{Eq:intr1} holds; this is clear if $1\leq i,j\leq d$ by assumption.
If $i=d+\alpha$ and $1\leq j \leq d$, one has
\begin{align*}
&\dgal{v_{d+\alpha},v_j} + \dgal{v_j , v_{d+\alpha}}^\circ
= - v_{d+\alpha} \ast(\dgal{v_{i_\alpha},v_j} + \dgal{v_j , v_{i_\alpha}}^\circ) \ast v_{d+\alpha} \\
 =&- \frac{\lambda_{i_\alpha}+\lambda_{j}}{2} (1 \otimes v_{d+\alpha} v_j - v_jv_{d+\alpha} \otimes 1)
- \frac{\lambda_{i_\alpha}-\lambda_{j}}{2} \, (v_{d+\alpha}\otimes v_j - v_j \otimes v_{d+\alpha})  \\
=&
 \ \frac{\lambda_{d+\alpha}+\lambda_{j}}{2}\, (v_{d+\alpha}\otimes v_j - v_j \otimes v_{d+\alpha})
+ \frac{\lambda_{d+\alpha}-\lambda_{j}}{2} (1 \otimes v_{d+\alpha} v_j - v_jv_{d+\alpha} \otimes 1)
\end{align*}
where we used \eqref{Eq:Loc3} and that \eqref{Eq:intr1} holds for the pair $(v_{i_\alpha},v_j)$.
If $1\leq i \leq d$ and $j=d+\alpha$, this holds by applying $(-)^\circ$ to the previous case.
For the last case, we obtain using \eqref{Eq:Loc3}, and \eqref{Eq:intr1} in the same way
\begin{align*}
&\dgal{v_{d+\alpha},v_{d+\beta}} + \dgal{v_{d+\beta} , v_{d+\alpha}}^\circ
= v_{d+\alpha} \ast(v_{d+\beta} \cdot (\dgal{v_{i_\alpha},v_{i_\beta}} + \dgal{v_{i_\beta} , v_{i_\alpha}}^\circ) \cdot v_{d+\beta} )\ast v_{d+\alpha} \\
 =& \frac{\lambda_{i_\alpha}+\lambda_{i_\beta}}{2} (v_{d+\beta} \otimes v_{d+\alpha} - v_{d+\alpha} \otimes v_{d+\beta})
+ \frac{\lambda_{i_\alpha}-\lambda_{i_\beta}}{2}  \, (v_{d+\beta} v_{d+\alpha}\otimes 1 - 1 \otimes v_{d+\alpha} v_{d+\beta})  \\
=&
\frac{\lambda_{d+\alpha}+\lambda_{d+\beta}}{2} \, (v_{d+\alpha}\otimes v_{d+\beta} - v_{d+\beta} \otimes v_{d+\alpha})
+ \frac{\lambda_{d+\alpha}-\lambda_{d+\beta}}{2}  (1 \otimes v_{d+\alpha} v_{d+\beta} - v_{d+\beta}v_{d+\alpha} \otimes 1)\,,
\end{align*}
which completes the verification.

In the Poisson case, we need to check \eqref{Eq:DJac-v}. This is clear if $1\leq i,j,k\leq d$ by assumption.
Let us then consider $1\leq i,j\leq d$ and $k=d+\alpha$.
By Lemma \ref{Lem:DJac} and $\K$-linearity,
\begin{align*}
 \DJac(v_i,v_j,v_{d+\alpha})=-(v_{d+\alpha}\otimes 1 \otimes 1) \, \DJac(v_i,v_j,v_{i_\alpha}) \, (1\otimes 1 \otimes v_{d+\alpha})\,.
\end{align*}
Since \eqref{Eq:DJac-v} holds in the case $1\leq i,j,i_\alpha\leq d$, we find by \eqref{Eq:Loc3}
\begin{align*}
 \DJac(v_i,v_j,v_{d+\alpha})=&
\frac{\lambda_{i}+\lambda_{j}}{2} \, v_j\otimes_1 (v_{d+\alpha}\cdot \dgal{v_i,v_{i_\alpha}} \cdot  v_{d+\alpha}) \\
&- \frac{\lambda_{i}-\lambda_{j}}{2}  \, 1\otimes_1 (v_j \ast v_{d+\alpha} \cdot \dgal{v_i,v_{i_\alpha}} \cdot  v_{d+\alpha} )  \\
=& -\frac{\lambda_{i}+\lambda_{j}}{2} \, v_j\otimes_1 \dgal{v_i,v_{d+\alpha}}
+\frac{\lambda_{i}-\lambda_{j}}{2}  \, 1\otimes_1 (v_j \ast \dgal{v_i,v_{d+\alpha}} )\,.
\end{align*}
Therefore \eqref{Eq:DJac-v} holds for $1\leq i,j\leq d$ and any $1\leq k \leq d+r$.
In turn, we get that \eqref{Eq:DJac-v} holds for $1\leq i\leq d$ and any $1\leq j,k \leq d+r$ by analogous computations using the second derivation rule mentioned in Lemma \ref{Lem:DJac}.

It remains to check \eqref{Eq:DJac-v} when $i=d+\alpha$.
By \eqref{Eq:DJac-DerNot}, note that we can write
\begin{equation}
 \begin{aligned} \label{Eq:Loc4}
&\DJac(v_{d+\alpha},v_j,v_k)= -(1\otimes v_{d+\alpha}\otimes 1) \DJac(v_{i_\alpha},v_j,v_k) (v_{d+\alpha}\otimes 1 \otimes 1) \\
 &\qquad +\big( 1\otimes (\dgal{v_j,v_{d+\alpha}} + \dgal{v_{d+\alpha},v_j}^\circ) \big)
 (\dgal{v_{i_\alpha},v_k}'v_{d+\alpha}  \otimes 1 \otimes \dgal{v_{i_\alpha},v_k}'') \,.
 \end{aligned}
\end{equation}
Write the two terms appearing on the right-hand side of \eqref{Eq:Loc4} as $\mathtt{T}_1$ and $\mathtt{T}_2$.
Using \eqref{Eq:DJac-v} (with the first index in $\{1,\ldots,d\}$), we rewrite $\mathtt{T}_1$ as
\begin{align*}
&\mathtt{T}_1=
\frac{\lambda_{i_\alpha}+\lambda_{j}}{2} \, (v_{d+\alpha}v_j)\otimes_1 (\dgal{v_{i_\alpha},v_k}\ast v_{d+\alpha})
- \frac{\lambda_{i_\alpha}-\lambda_{j}}{2}  \, v_{d+\alpha}\otimes_1 (v_j \ast \dgal{v_{i_\alpha},v_k} \ast v_{d+\alpha})\\
&=
-\frac{\lambda_{d+\alpha}-\lambda_{j}}{2}\, (v_{d+\alpha}v_j)\otimes_1 (\dgal{v_{i_\alpha},v_k}\ast v_{d+\alpha})
+\frac{\lambda_{d+\alpha}+\lambda_{j}}{2} \, v_{d+\alpha}\otimes_1 (v_j \ast \dgal{v_{i_\alpha},v_k} \ast v_{d+\alpha}).
\end{align*}
As we already noticed that \eqref{Eq:intr1} holds, we also get
\begin{align*}
 \mathtt{T}_2&=
\frac{\lambda_j+\lambda_{d+\alpha}}{2}\, v_j\otimes_1 (v_{d+\alpha} \ast \dgal{v_{i_\alpha},v_k} \ast v_{d+\alpha})
- \frac{\lambda_j+\lambda_{d+\alpha}}{2}\, v_{d+\alpha}\otimes_1 (v_j \ast \dgal{v_{i_\alpha},v_k} \ast v_{d+\alpha}) \\
+&
\frac{\lambda_j-\lambda_{d+\alpha}}{2}\, 1\otimes_1 (v_jv_{d+\alpha} \ast \dgal{v_{i_\alpha},v_k} \ast v_{d+\alpha})
- \frac{\lambda_j-\lambda_{d+\alpha}}{2}\, (v_{d+\alpha}v_j)\otimes_1 (\dgal{v_{i_\alpha},v_k}\ast v_{d+\alpha}).
\end{align*}
Summing $\mathtt{T}_1$ and $\mathtt{T}_2$, we obtain
\begin{align*}
\DJac(v_{d+\alpha},v_j,v_k)
=&\frac{\lambda_j+\lambda_{d+\alpha}}{2}\, v_j\otimes_1 (v_{d+\alpha} \ast \dgal{v_{i_\alpha},v_k} \ast v_{d+\alpha}) \\
&+ \frac{\lambda_j-\lambda_{d+\alpha}}{2}\, 1\otimes_1 (v_jv_{d+\alpha} \ast \dgal{v_{i_\alpha},v_k} \ast v_{d+\alpha}) \\
=&-\frac{\lambda_{d+\alpha}+\lambda_j}{2}\, v_j\otimes_1 \dgal{v_{d+\alpha},v_k}
+ \frac{\lambda_{d+\alpha}-\lambda_j}{2}\, 1\otimes_1 (v_j \ast \dgal{v_{d+\alpha},v_k}) ,
\end{align*}
where the second equality holds by \eqref{Eq:Loc3}.
\end{proof}

\begin{cor} \label{Cor:loc}
 If $\dgal{-,-}$ defines a mixed double Poisson algebra structure (of some weight) on $\K \langle v_1,\ldots,v_d\rangle$, then it uniquely extends to such a structure on the algebra of noncommutative Laurent polynomials
 $A=\K \langle v_1^{\pm1},\ldots,v_d^{\pm 1}\rangle$.
In particular, this defines a modified double Poisson bracket on $A$.
\end{cor}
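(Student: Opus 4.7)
The plan is to apply Theorem~\ref{Thm:loc} directly with $r = d$ and $I = (1, 2, \ldots, d)$, i.e.\ inverting \emph{all} the generators at once. By that theorem, the given mixed double Poisson algebra structure of weight $\underline{\lambda} = (\lambda_1, \ldots, \lambda_d)$ on $\K\langle v_1, \ldots, v_d\rangle$ extends uniquely to a mixed double Poisson algebra structure on
\[
A = \K\langle v_1, \ldots, v_d, v_1^{-1}, \ldots, v_d^{-1}\rangle = \K\langle v_1^{\pm 1}, \ldots, v_d^{\pm 1}\rangle
\]
of weight $(\lambda_1, \ldots, \lambda_d, -\lambda_1, \ldots, -\lambda_d) \in \K^{2d}$. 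The uniqueness and the explicit formulas \eqref{Eq:Preloc} for the brackets involving inverses (forced by the Leibniz rules and $\K$-linearity) are already built into Theorem~\ref{Thm:loc}, so there is nothing additional to verify at this stage.

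Once the mixed double Poisson algebra structure on $A$ is in hand, the second assertion that $\dgal{-,-}$ is a modified double Poisson bracket on $A$ follows immediately from Theorem~\ref{Thm:MAIN}, since the remark at the start of \ref{S:Loc} explicitly states that Theorem~\ref{Thm:MAIN} is valid for an arbitrary $\K$-algebra equipped with a fixed choice of generators, not just for free algebras. Concretely, $A$ comes with the $2d$ generators $v_1, \ldots, v_d, v_1^{-1}, \ldots, v_d^{-1}$ obtained via the projection $\pi$ of \eqref{Eq:mapPi} with $r = d$, so the framework of \ref{S:Loc} applies.

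There is no real obstacle here: the corollary is essentially a one-line deduction from Theorem~\ref{Thm:loc} (to extend the structure) combined with Theorem~\ref{Thm:MAIN} (to conclude that a mixed double Poisson algebra is modified double Poisson). One could alternatively prove the corollary inductively by inverting one generator at a time, applying Theorem~\ref{Thm:loc} with $r = 1$ at each step and tracking how the weight vector grows; the result is the same, but the single-step application with $I = (1, \ldots, d)$ is more economical. The only thing worth noting explicitly in the write-up is that when all $d$ generators are inverted, the weight vector doubles in length and the new entries are the negatives of the original ones, which is consistent with the relations $v_\alpha v_\alpha^{-1} = 1 = v_\alpha^{-1} v_\alpha$ being compatible with \eqref{Eq:intr1}.
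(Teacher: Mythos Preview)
Your proposal is correct and matches the paper's approach: the corollary is stated without proof immediately after Theorem~\ref{Thm:loc}, so it is intended as a direct consequence of that theorem (with $r=d$) together with Theorem~\ref{Thm:MAIN}, exactly as you argue.
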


\begin{exmp}
 Take $\lambda=1$ in Proposition \ref{Pr:CL1}. The case $\rho=-\lambda$ with $\alpha=0$, $\beta=\lambda$ reads
 \begin{equation} \label{Eq:Free2-Art}
\dgal{v,v}=0, \quad \dgal{w,w}=0, \quad
\dgal{v,w}=- \, wv \otimes 1 ,  \quad
\dgal{w,v}=\, vw \otimes 1,
 \end{equation}
which defines a mixed double Poisson algebra structure of weight $(1,1)$ on $\K\langle v,w \rangle$.
Hence \eqref{Eq:Free2-Art} also defines a mixed double Poisson algebra structure on
$\K\langle v^{\pm 1},w^{\pm 1} \rangle$ by localisation, and therefore a modified double Poisson bracket, cf. Corollary~\ref{Cor:loc}.
The weight is $(1,1,-1,-1)$ with respect to the generators $v,w,v^{-1},w^{-1}$.
This is Arthamonov's first example \cite[\S3.4]{Art15} (with $v,w$ respectively standing for $u,v$) of modified double Poisson bracket, constructed in relation to the Kontsevich system \cite{EW}.
\end{exmp}


\section{Examples and Arthamonov's conjecture} \label{S:ClassConj}

Recall the notion of a mixed double Poisson algebra of weight $\underline{\lambda}\in \K^d$ as in Definition \ref{Def:PwDA} with $A=\K\langle v_1,\ldots,v_d \rangle$.
We will focus on quadratic mixed double Poisson algebras, i.e. for any $1\leq i,j\leq d$,  $\dgal{v_i,v_j}\in A\otimes A$ has degree $+2$, where the degree is such that $|fg|=|f|+|g|$ for $f,g\in A\otimes A$ homogeneous and $|v_k\otimes 1|=|1\otimes v_k|=1$ for all $k$.

For the trivial weight $(0,\ldots,0)$, we get back Van den Bergh's cyclic skew-symmetry \eqref{Eq:cskew} and the vanishing of $\DJac$ \eqref{Eq:DJac}; this means that we are in the case of a quadratic double Poisson bracket \cite{VdB1} on a free algebra.
These were classified by Odesskii, Rubtsov and Sokolov \cite{ORS}.

Next, consider a homogeneous weight $(\lambda,\ldots,\lambda)$, $\lambda\in \K^\times$.
If $\dgal{-,-}$ restricts to a map $V\otimes V \to V\otimes V$, $V=\oplus_{k=1}^d \K v_k$, we explained in Example \ref{Exmp:GG} that this is an extension of a  $\lambda$-double Lie algebra. Explicit examples can be found in \cite{GG22}.
To get new interesting cases, we need to assume that some of the weights $(\lambda_{j})$ are distinct.
We shall investigate their classification in the Appendix.
Below, we report on consequences of this classification, and we deduce that Arthamonov's conjecture is true.

\subsection{Families of mixed double Poisson algebras}

\begin{exmp}  \label{Exmp:CL1}
The algebra $\K\langle v,w\rangle$ is a mixed double Poisson algebra of weight $(1,-1)$ if it is equipped with the operation $\dgal{-,-}$ satisfying one of the following four conditions
\begin{subequations}
 \begin{align}
&\dgal{v,w}=0 ,  \qquad
&&\dgal{w,v}= - \, 1 \otimes wv + \, vw \otimes 1 ; \\ 
&\dgal{v,w}= \, 1 \otimes vw ,  \qquad
&&\dgal{w,v}= - \, 1 \otimes wv ; \\ 
&\dgal{v,w}=- \, wv \otimes 1 ,  \qquad
&&\dgal{w,v}= \, vw \otimes 1 ; \\ 
&\dgal{v,w}=  \, 1 \otimes vw -  \, wv \otimes 1 ,  \qquad
&&\dgal{w,v}= 0.  
 \end{align}
\end{subequations}
 This follows from the first part of Proposition~\ref{Pr:CL1}.
\end{exmp}

Write $A_d=\K\langle v_1,\ldots,v_d \rangle$, $d\geq 3$.

\begin{exmp}  \label{Exmp:CL3b}
We can view $A_3$ as a mixed double algebra of weight $(1,1,-1)$ by considering the operation $\dgal{-,-}$ uniquely defined by
\begin{equation} \label{Eq:CL3b}
 \begin{aligned}
  &\dgal{v_1,v_1}=0, \quad \dgal{v_2,v_2}=0, \quad \dgal{v_3,v_3}=0, \\
&\dgal{v_1,v_2}= \tilde \alpha_3 \, v_1 \otimes v_2 - \tilde \beta_3 \, v_2 \otimes v_1,  \\
&\dgal{v_2,v_1}=(-1+\tilde \beta_3) \, v_1 \otimes v_2 +  (1-\tilde \alpha_3)\, v_2 \otimes v_1\,, \\
&\dgal{v_1,v_3}= \alpha_2 \, 1 \otimes v_1 v_3 -  \beta_2 \, v_3 v_1 \otimes 1,  \\
&\dgal{v_3,v_1}=(-1+\beta_2) \, 1 \otimes v_3 v_1 +  (1- \alpha_2)\, v_1 v_3 \otimes 1\,, \\
&\dgal{v_2,v_3}= \alpha_1 \, 1 \otimes v_2 v_3 - \beta_1 \, v_3 v_2 \otimes 1,  \\
&\dgal{v_3,v_2}=(-1+\beta_1) \, 1 \otimes v_3 v_2 +  (1-\alpha_1)\, v_2 v_3 \otimes 1\,.
 \end{aligned}
\end{equation}
for $\alpha_1, \alpha_2,\tilde\alpha_3,\beta_1, \beta_2,\tilde\beta_3 \in \K$.
Furthermore, $\dgal{-,-}$ is Poisson when the triples
$(\alpha_1,\alpha_2,\tilde{\beta_3})$ and $(\beta_1,\beta_2,\tilde{\alpha_3})$ take one of the following $6$ values
\begin{equation} \label{Eq:TripCL3a}
 (0,0,0), \ (1,0,0), \  (0,0,1), \ (1,1,0), \  (0,1,1), \ (1,1,1).
\end{equation}
 These cases follow from the classification of Proposition~\ref{Pr:CL3b}.
\end{exmp}

Next, assume that $d>3$.
We introduce for $0\leq \delta \leq d$,
\begin{equation} \label{Eq:1delD}
 \mathbf{1}_{\delta,d}:=(\underbrace{1,\ldots,1}_{\delta},\underbrace{-1,\ldots,-1}_{d-\delta}) \,.
\end{equation}
The next two results will be proved in \ref{ss:PfCLd} and \ref{ss:PfCLd-2}.

\begin{prop}  \label{Pr:CLd}
Fix $d\geq 4$ and $0\leq \delta \leq d$.
The following defines a mixed double Poisson algebra structure on $A_d$ of weight
$\mathbf{1}_{\delta,d}$:
\begin{equation}
  \begin{aligned} \label{Eq:CL4-1}
&\dgal{v_i,v_i}=0, \quad \text{for } 1 \leq i \leq d, \\
&\dgal{v_i,v_j}= \, v_i \otimes v_j -  v_j \otimes v_i,  \quad
\dgal{v_j,v_i}=0\,,  \quad \text{for } 1 \leq i<j \leq \delta\,, \\
&\dgal{v_i,v_k}= \, 1 \otimes v_i v_k - v_k v_i \otimes 1,  \quad
\dgal{v_k,v_i}=0\,, \quad \text{for } 1 \leq i\leq \delta < k \leq d\,, \\
&\dgal{v_k,v_l}= -v_k \otimes v_l +  v_l \otimes v_k,  \quad
\dgal{v_l,v_k}=0\,, \quad \text{for } \delta < k<l \leq d\,.
 \end{aligned}
\end{equation}
In particular, \eqref{Eq:CL4-1} defines a modified double Poisson bracket on $A_d$.
\end{prop}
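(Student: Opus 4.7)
By Theorem~\ref{Thm:MAIN}, it suffices to verify the mixed double algebra condition~\eqref{Eq:intr1} on all pairs of generators and the Poisson property~\eqref{Eq:DJac-v} on all triples of generators; both reduce to finite case checks.

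The pair identity~\eqref{Eq:intr1} is read off directly from~\eqref{Eq:CL4-1}. For $i=j$ both sides vanish; for $i\neq j$ exactly one of $\dgal{v_i,v_j}$ and $\dgal{v_j,v_i}$ is non-zero, and its symmetrisation $\dgal{v_i,v_j} + \dgal{v_j,v_i}^\circ$ matches the right-hand side of~\eqref{Eq:intr1}: when the two weights agree the coefficient pair $\bigl((\lambda_i+\lambda_j)/2,\,(\lambda_i-\lambda_j)/2\bigr)$ equals $(\pm 1,0)$ and only the ``commutator'' term contributes, whereas when the weights disagree the pair is $(0,\pm 1)$ and only the ``mixed'' term contributes.

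For~\eqref{Eq:DJac-v} the key observation is that the formulas~\eqref{Eq:CL4-1} produce tensor monomials only in the generators appearing on the left; hence the subalgebra $B_{ijk}=\K\langle v_i,v_j,v_k\rangle\subset A_d$ generated by any triple of indices is closed under $\dgal{-,-}$, and $\DJac(v_i,v_j,v_k)$ depends only on the restriction of the bracket to $B_{ijk}$, itself a mixed double algebra of weight $(\lambda_i,\lambda_j,\lambda_k)\in\{\pm 1\}^3$. We dispatch by the number of distinct indices. If $|\{i,j,k\}|=1$ all brackets vanish. If $|\{i,j,k\}|=2$ the restricted structure is either the Leibniz extension of a $\pm 1$-double Lie algebra (same-weight case, Example~\ref{Exmp:GG}) or one of the four structures classified in Example~\ref{Exmp:CL1} (opposite-weight case). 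If $|\{i,j,k\}|=3$ the pure profiles $(\pm 1,\pm 1,\pm 1)$ again reduce to $\pm 1$-double Lie algebras; the mixed profile $(1,1,-1)$ corresponds to the parameter specialisation $\tilde\alpha_3=\tilde\beta_3=\alpha_1=\alpha_2=\beta_1=\beta_2=1$ in Proposition~\ref{Pr:CL3b}, which is the admissible triple $(1,1,1)$ on the list~\eqref{Eq:TripCL3a}; the mixed profile $(1,-1,-1)$ reduces to the previous case via the symmetry ``negate the bracket and the weights simultaneously, then permute the generators''---a symmetry that manifestly preserves both~\eqref{Eq:intr1} and~\eqref{Eq:DJac-v}---and a direct check shows that the resulting parameter point $(\alpha_1,\alpha_2,\tilde\beta_3)=(\beta_1,\beta_2,\tilde\alpha_3)=(0,0,1)$ is again on the list~\eqref{Eq:TripCL3a}. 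In every subcase the triple therefore lies in a subalgebra already known to be a mixed double Poisson algebra, so~\eqref{Eq:DJac-v} holds.

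The only real obstacle is the bookkeeping of matching each triple restriction of~\eqref{Eq:CL4-1} to an explicit admissible point in the small-rank classifications and verifying this point lies in the Poisson locus. The clean $\pm 1$-coefficient form of~\eqref{Eq:CL4-1} makes every identification unambiguous, and the conclusion ``modified double Poisson bracket'' is then inherited via Theorem~\ref{Thm:MAIN}.
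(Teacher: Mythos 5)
Your strategy is essentially the paper's: verify \eqref{Eq:intr1} directly on pairs of generators, observe that $\DJac(v_i,v_j,v_k)$ only sees the restriction of the bracket to the free subalgebra on the generators involved, and then match each weight profile of a triple against the low-rank classifications (Propositions \ref{Pr:CL1}, \ref{Pr:CL3a}, \ref{Pr:CL3b}), using the symmetry of simultaneously negating the bracket and the weight and permuting generators. Your identifications for the mixed profiles are correct and coincide with the paper's: all constants equal to $1$ in \eqref{Eq:CL3b} for the profile $(1,1,-1)$, and $(\alpha_1,\alpha_2,\tilde\beta_3)=(\beta_1,\beta_2,\tilde\alpha_3)=(0,0,1)$ after the sign flip and relabelling for $(1,-1,-1)$; both triples appear in \eqref{Eq:TripCL3a}, so Proposition \ref{Pr:CL3b} applies, and the concluding appeal to Theorem \ref{Thm:MAIN} is the same as in the paper.

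There is, however, a gap in the uniform-weight cases. For equal-weight pairs and for the pure profiles $(1,1,1)$ and $(-1,-1,-1)$ you dispose of \eqref{Eq:DJac-v} by saying the restricted structure ``reduces to a $\pm 1$-double Lie algebra'', citing Example \ref{Exmp:GG}. But Example \ref{Exmp:GG} is only a dictionary: for homogeneous weight, being a $\lambda$-double Lie algebra means precisely that \eqref{Eq:GG-Jac} holds, and \eqref{Eq:GG-Jac} on generators \emph{is} the identity \eqref{Eq:DJac-v} you are supposed to verify. As written, this sub-case is asserted rather than proved (no computation, and no citation of a specific established example). The fix is one line and is exactly the paper's route: after negating the bracket when the common weight is $-1$, the restriction of \eqref{Eq:CL4-1} to three equal-weight generators is \eqref{Eq:CL3a} with all $\tilde\alpha_i=\tilde\beta_i=1$, and \eqref{Eq:talph}--\eqref{Eq:tbeta} hold because $1+1-1-1=0$, so Proposition \ref{Pr:CL3a} gives the Poisson property (alternatively, cite Example 1 in \cite[\S 4]{GG22}). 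Equal-weight pairs and repeated-index triples are then covered by embedding the at most two indices into a triple $a<b<c$ (possible since $d\geq 4$), or directly by Proposition \ref{Pr:CL1}(2) with $\tilde\alpha=\tilde\beta=\lambda$. With this substitution for your ``double Lie algebra'' shortcut, the argument is complete and matches the paper's proof.
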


\begin{prop}  \label{Pr:CLd-2}
Fix $d\geq 4$ and $0\leq \delta \leq d$.
The following defines a mixed double Poisson algebra structure on $A_d$ of weight
$\mathbf{1}_{\delta,d}$:
\begin{equation}
  \begin{aligned} \label{Eq:CL4-2}
&\dgal{v_i,v_i}=0, \quad \text{for } 1 \leq i \leq d, \\
&\dgal{v_i,v_j}= \, v_i \otimes v_j,  \quad
\dgal{v_j,v_i}=-v_i \otimes v_j\,,  \quad \text{for } 1 \leq i<j \leq \delta\,, \\
&\dgal{v_i,v_k}=  - v_k v_i \otimes 1,  \quad
\dgal{v_k,v_i}= \, v_i v_k \otimes 1\,, \quad \text{for } 1 \leq i\leq \delta < k \leq d\,, \\
&\dgal{v_k,v_l}= -v_k \otimes v_l,  \quad
\dgal{v_l,v_k}=\, v_k \otimes v_l\,, \quad \text{for } \delta < k<l \leq d\,.
 \end{aligned}
\end{equation}
In particular, \eqref{Eq:CL4-2} defines a modified double Poisson bracket on $A_d$.
\end{prop}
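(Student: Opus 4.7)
The plan is to invoke Theorem~\ref{Thm:MAIN}: it suffices to verify that the operation $\dgal{-,-}$ in \eqref{Eq:CL4-2} satisfies the two axioms of a mixed double Poisson algebra of weight $\mathbf{1}_{\delta,d}$, namely the bilinear identity \eqref{Eq:intr1} on every pair of generators and the Poisson property \eqref{Eq:DJac-v} on every triple. The first of these is a short case check organised by the weight pattern of $(i,j)$: when $i=j$ both sides vanish since $\dgal{v_i,v_i}=0$; when $i,j\leq\delta$ (respectively $\delta<i,j$) both sides equal $v_i\otimes v_j-v_j\otimes v_i$ (resp.\ its negative); and when $i\leq\delta<k$ both sides equal $1\otimes v_iv_k-v_kv_i\otimes 1$, consistent with $\tfrac{\lambda_i-\lambda_k}{2}=1$.

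The bulk of the proof is the verification of \eqref{Eq:DJac-v}. I would stratify the triples $(v_i,v_j,v_k)$ by their weight pattern in $\{+,-\}^3$ (where $+$ means the index lies in $\{1,\ldots,\delta\}$ and $-$ means it lies in $\{\delta+1,\ldots,d\}$), giving eight cases. For the uniform patterns $(+,+,+)$ and $(-,-,-)$, the restriction of $\dgal{-,-}$ to the subspace $V_\pm=\operatorname{span}_\K\{v_i\mid \lambda_i=\pm 1\}$ takes values in $V_\pm\otimes V_\pm$, so by Example~\ref{Exmp:GG} it is enough to check the $(\pm 1)$-double Lie Jacobi identity \eqref{Eq:GG-Jac} on triples of generators. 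This is a short computation; for instance in the $(+,+,+)$ case one finds $\DJac(v_1,v_2,v_3)=-v_1\otimes v_2\otimes v_3=-v_2\otimes_1\dgal{v_1,v_3}$. For the six mixed patterns, one expands $\DJac(v_i,v_j,v_k)$ via \eqref{Eq:dbr3}, uses the Leibniz rules \eqref{Eq:Lei} to distribute brackets through the products $v_kv_i$ or $v_iv_k$ appearing in the mixed-type formulas of \eqref{Eq:CL4-2}, and compares term-by-term with the right-hand side of \eqref{Eq:DJac-v}.

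The main obstacle is the bookkeeping in the mixed patterns: the three summands of $\DJac$ place the middle generator in distinct tensor slots (via $\dgal{\cdot,\cdot}_L$, $\dgal{\cdot,\cdot}_R$ and $\otimes_1$), while the right-hand side of \eqref{Eq:DJac-v} combines $\otimes_1$ with the inner $\ast$-bimodule action from \eqref{Eq:Bimod}. A typical calculation runs as follows. In the pattern $(+,+,-)$ with $i<j\leq\delta<k$, the Leibniz rule gives $\dgal{v_i,v_kv_j}=(v_k\otimes 1)(v_i\otimes v_j)+(-v_kv_i\otimes 1)(1\otimes v_j)=0$, so the first term of $\DJac$ vanishes; the second vanishes since $\dgal{v_j,1}=0$; and the third is $-\dgal{v_i\otimes v_j,v_k}_L=v_kv_i\otimes v_j\otimes 1$, which matches $-\tfrac{\lambda_i+\lambda_j}{2}\,v_j\otimes_1\dgal{v_i,v_k}$ because $\lambda_i+\lambda_j=2$ and $\lambda_i-\lambda_j=0$. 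The remaining five mixed patterns $(+,-,+)$, $(-,+,+)$, $(+,-,-)$, $(-,+,-)$, $(-,-,+)$ are dispatched by analogous (though not identical) cancellation-and-matching arguments, with signs and tensor positions carefully tracked.
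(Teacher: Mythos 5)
Your outline is sound and your two sample computations are accurate (for $i<j\le\delta<k$ one indeed gets $\dgal{v_i,v_kv_j}=0$ and $\DJac(v_i,v_j,v_k)=v_kv_i\otimes v_j\otimes 1=-v_j\otimes_1\dgal{v_i,v_k}$, and the $(+,+,+)$ triple gives $-v_1\otimes v_2\otimes v_3$ on both sides), but your route is not the paper's. The paper does not recompute $\DJac$ at all: since \eqref{Eq:DJac-v} involves at most three generators at a time, it restricts the operation to each triple $v_a,v_b,v_c$ with $a<b<c$ and recognises the restriction as an instance of the $d=3$ families classified in the appendix — the constants $(\tilde\alpha_1,\tilde\alpha_2,\tilde\alpha_3)=(1,1,1)$, $(\tilde\beta_1,\tilde\beta_2,\tilde\beta_3)=(0,0,0)$ in \eqref{Eq:CL3a} for the all-plus pattern and (after multiplying the bracket by $-1$) the all-minus pattern, and $(\alpha_1,\alpha_2,\tilde\beta_3)=(0,0,0)$, $(\beta_1,\beta_2,\tilde\alpha_3)=(1,1,1)$ in \eqref{Eq:CL3b} for the mixed patterns, with a relabelling and sign flip when the pattern is $(+,-,-)$ — so that Propositions \ref{Pr:CL3a} and \ref{Pr:CL3b} deliver \eqref{Eq:DJac-v} at once, for all orderings of the three indices and for triples with repeated indices (the two-index cases being covered inside those propositions via Proposition \ref{Pr:CL1}). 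What that citation buys is exactly the bookkeeping your plan still owes: your eight weight patterns are not eight cases, because \eqref{Eq:CL4-2} is not symmetric in the two indices of a block (e.g. $\dgal{v_i,v_j}=v_i\otimes v_j$ but $\dgal{v_j,v_i}=-v_i\otimes v_j$ for $i<j\le\delta$) and \eqref{Eq:DJac-v} must hold for every ordered triple $(i,j,k)$, repetitions included, so each pattern splits further according to the relative order within blocks and to coincidences of indices. Your direct verification is more self-contained (no appeal to the appendix classification, and the reduction of the uniform patterns to the $\pm1$-double Lie identity \eqref{Eq:GG-Jac} is a nice shortcut since the second term of \eqref{Eq:DJac-v} drops when $\lambda_i=\lambda_j$), but to be complete it must enumerate and check all these sub-cases; the paper's reduction avoids that labour. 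Your treatment of \eqref{Eq:intr1} and the final appeal to Theorem \ref{Thm:MAIN} coincide with the paper.
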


\begin{rem}
 The case $\delta=d$ in Propositions \ref{Pr:CLd} and \ref{Pr:CLd-2} can be found, respectively, as Examples 1 and 2 in \cite[\S4]{GG22}.
\end{rem}

\subsection{The second instance of Arthamonov's conjecture}

If we look at the operation $\dgal{-,-}^{I}$ defined in \eqref{Eq:MDBI}, we see that
 \begin{align*}
&\dgal{x_1,x_2}^{I}+(\dgal{x_2,x_1}^{I})^\circ= 1\otimes x_1x_2-x_2x_1 \otimes 1, \\
&\dgal{x_2,x_3}^{I}+(\dgal{x_3,x_2}^{I})^\circ=-(x_2 \otimes x_3 - x_3\otimes x_2), \\
&\dgal{x_3,x_1}^{I}+(\dgal{x_1,x_3}^{I})^\circ= -(1\otimes x_3x_1-x_1x_3 \otimes 1).
 \end{align*}
 This defines a mixed double algebra of type
\begin{equation}
 \Lambda^{I}=
\left( \begin{array}{ccc}
    \lambda_{1}&0&0 \\ 0&\lambda_{2}&-1 \\ 0&-1&\lambda_{3}
       \end{array}
\right) \,, \qquad
M^{I}=
\left( \begin{array}{ccc}
    0&1&1 \\ -1&0&0 \\ -1&0&0
       \end{array}
\right)\,.
\end{equation}
Furthermore, it is easily seen to be of weight $(1,-1,-1)$ as  \eqref{Eq:MuLam} holds.

\begin{thm} \label{Thm:AI}
The operation $\dgal{-,-}^{I}$ \eqref{Eq:MDBI} on  $\K\langle x_1,x_2,x_3\rangle$ is a modified double Poisson bracket.
\end{thm}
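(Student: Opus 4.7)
The strategy is to invoke Theorem~\ref{Thm:MAIN}: it suffices to show that $(A,\dgal{-,-}^{I})$ is a mixed double Poisson algebra of weight $(1,-1,-1)$. The mixed double algebra condition \eqref{Eq:intr1} has already been verified in the paragraph preceding the theorem. What remains is to establish the Poisson property \eqref{Eq:DJac-v} on triples of generators.

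Rather than computing the $27$ double Jacobiators $\DJac(x_i,x_j,x_k)$ directly, the plan is to \emph{reduce to Proposition~\ref{Pr:CL3b}} by exploiting two elementary symmetries of Definition~\ref{Def:PwDA}: (i) relabelling the generators permutes the weight accordingly, and (ii) rescaling $\dgal{-,-}\mapsto \nu\dgal{-,-}$ rescales $\underline{\lambda}$ by $\nu$ (cf. the observations following Definition~\ref{Def:MixDA}). Applying $\nu=-1$ together with the cyclic relabelling $y_1:=x_2$, $y_2:=x_3$, $y_3:=x_1$ turns $\dgal{-,-}^{I}$, of weight $(1,-1,-1)$ on $(x_1,x_2,x_3)$, into an operation of weight $(1,1,-1)$ on $(y_1,y_2,y_3)$.

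The main technical step is then to read off the transformed brackets and identify the parameters appearing in \eqref{Eq:CL3b}. A direct inspection gives
\begin{equation*}
\dgal{y_1,y_2}=y_1\otimes y_2,\quad \dgal{y_1,y_3}=-y_3y_1\otimes 1,\quad \dgal{y_2,y_3}=1\otimes y_2y_3,
\end{equation*}
and the companions obtained by swapping arguments, which matches \eqref{Eq:CL3b} with
\begin{equation*}
 (\alpha_1,\alpha_2,\tilde\beta_3)=(1,0,0),\qquad (\beta_1,\beta_2,\tilde\alpha_3)=(0,1,1).
\end{equation*}
Both triples appear in the list~\eqref{Eq:TripCL3a}, so Proposition~\ref{Pr:CL3b} yields that the transformed operation is a mixed double Poisson algebra on $\K\langle y_1,y_2,y_3\rangle$. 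Unwinding the rescaling and relabelling, $\dgal{-,-}^{I}$ itself is a mixed double Poisson algebra of weight $(1,-1,-1)$, and Theorem~\ref{Thm:MAIN} concludes.

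The main obstacle is psychological rather than computational: one must notice that although $(1,-1,-1)$ does not literally appear as a weight treated in Propositions~\ref{Pr:CLd}, \ref{Pr:CLd-2}, it falls within the three-generator classification after the sign change and cyclic shift above. Once this is spotted, the remaining work is a routine bookkeeping check that the six nonzero brackets of $\dgal{-,-}^{I}$ match the parameter values read from \eqref{Eq:CL3b}, and a verification that the two parameter triples land in \eqref{Eq:TripCL3a}---both of which are straightforward.
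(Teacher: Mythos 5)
Your proposal is correct and takes essentially the same route as the paper: rescale $\dgal{-,-}^{I}$ by $-1$, relabel the generators to obtain a mixed double algebra of weight $(1,1,-1)$, match the resulting brackets against \eqref{Eq:CL3b}, check that the parameter triples lie in \eqref{Eq:TripCL3a}, and conclude via Proposition~\ref{Pr:CL3b} and Theorem~\ref{Thm:MAIN}. The only (immaterial) difference is the relabelling — your cyclic shift $y_1:=x_2$, $y_2:=x_3$, $y_3:=x_1$ versus the paper's $v_1:=x_3$, $v_2:=x_2$, $v_3:=x_1$ — which merely interchanges the two admissible parameter triples $(1,0,0)$ and $(0,1,1)$.
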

\begin{proof}
 Up to multiplying $\dgal{-,-}^{I}$ by $-1$ and setting $v_1:=x_3$, $v_2:=x_2$, $v_3:=x_1$, we get a mixed double algebra structure of weight $(1,1,-1)$ that reads:
\begin{equation} \label{Eq:MDBIbis}
 \begin{aligned}
&\dgal{v_1,v_2}=-v_2 \otimes v_1,  \quad &&\dgal{v_2,v_1}=v_2 \otimes v_1, \\
&\dgal{v_1,v_3}=1\otimes v_1v_3,  \quad &&\dgal{v_3,v_1}=-1 \otimes v_3v_1, \\
&\dgal{v_2,v_3}=-v_3 v_2 \otimes 1, \quad &&\dgal{v_3,v_2} = v_2 v_3 \otimes 1,
 \end{aligned}
\end{equation}
where zero brackets are omitted. If we take in \eqref{Eq:CL3b}  the constants
\begin{equation*}
 (\alpha_1,\alpha_2,\tilde\beta_3)= (0,1,1), \quad
 (\beta_1,\beta_2,\tilde\alpha_3) = (1,0,0),
\end{equation*}
we reproduce \eqref{Eq:MDBIbis}. By Proposition~\ref{Pr:CL3b}, this is a mixed double Poisson algebra; hence $\dgal{-,-}^{I}$ is a modified double Poisson bracket by Theorem \ref{Thm:MAIN}.
\end{proof}

\subsection{Open problems}

Based on the constructions carried out in the previous subsections, let us list some questions that require further investigation.

\begin{prob}
Put $A_d=\K\langle v_1,\ldots,v_d\rangle$.
\begin{enumerate}[label=(\alph*)]
 \item
 Does there exist a mixed double Poisson algebra structure on $A_2$ of weight $(\lambda,\rho)$ with $\lambda \neq \pm \rho$?
 \item
Does there exist a mixed double Poisson algebra structure on $A_d$ of some weight $\underline{\lambda}$ where the self-brackets of generators can be nonzero? (I.e. find examples where $\dgal{v_i,v_i}\neq 0$ for $v_i$ a generator of $A_d$ in the considered presentation).
\item
For $d\geq 2$ and any $\underline{\lambda}\in \K^d$,
does there exist a mixed double Poisson algebra structure on $A_d$ of that weight?
\end{enumerate}
\end{prob}

\begin{prob}
 Find examples of mixed double Poisson algebras that are not simply obtained by localisation/quotient of such a structure on a free algebra.
\end{prob}

\begin{prob}
Can one define analogous structures yielding modified double Poisson brackets where the failure to satisfy the cyclic skew-symmetry, cf.~\eqref{Eq:mixDA},
is homogeneous but not quadratic?
\end{prob}

\begin{prob}
Reformulate the conditions \eqref{Eq:mixDA} and \eqref{Eq:DJac-v} in terms of an operator $R$ (cf. \eqref{Eq:RBop})
in such a way that, for the case $(\lambda,\ldots,\lambda)$, $R$ is a $\lambda$-skew-symmetric Rota-Baxter operator of weight $\lambda$.
\end{prob}


\appendix

\section{Some classification results} \label{App:Class}

\subsection{Case \texorpdfstring{$d=2$}{d=2}}

Fix $\lambda,\rho\in \K$ and $A_2=\K\langle v,w\rangle$. A mixed double algebra of weight $(\lambda,\rho)$
must satisfy $\dgal{v,v} =- \dgal{v,v}^\circ$, $\dgal{w,w} =- \dgal{w,w}^\circ$, and
\begin{equation} \label{Eq:mixDA-2}
 \dgal{v,w} + \dgal{w,v}^\circ
 = \frac{\lambda+\rho}{2} (v \otimes w - w \otimes v)
+ \frac{\lambda-\rho}{2} \, (1\otimes vw - wv \otimes 1)\,.
\end{equation}
Furthermore, it is Poisson when the following Poisson conditions hold:
\begin{subequations}
 \begin{align}
 &\DJac(v,v,v)=-\lambda \ v\otimes_1 \dgal{v,v}\,, \qquad
\DJac(w,w,w)=-\rho \ w\otimes_1 \dgal{w,w}\,, \label{Eq:DJ2a}  \\
 &\DJac(v,v,w)=-\lambda \ v\otimes_1 \dgal{v,w}\,, \qquad
 \DJac(w,w,v)=-\rho \ w\otimes_1 \dgal{w,v}\,, \label{Eq:DJ2b} \\
 &\DJac(v,w,v)=-\frac{\lambda+\rho}{2} \ w\otimes_1 \dgal{v,v}
 + \frac{\lambda-\rho}{2} \ 1\otimes_1(w \ast \dgal{v,v})\,, \label{Eq:DJ2c} \\
 &\DJac(v,w,w)=-\frac{\lambda+\rho}{2} \ w\otimes_1 \dgal{v,w}
 + \frac{\lambda-\rho}{2} \ 1\otimes_1(w \ast \dgal{v,w})\,, \label{Eq:DJ2d} \\
 &\DJac(w,v,w)=-\frac{\lambda+\rho}{2} \ v\otimes_1 \dgal{w,w}
- \frac{\lambda-\rho}{2} \ 1\otimes_1(v \ast \dgal{w,w})\,, \label{Eq:DJ2e} \\
 &\DJac(w,v,v)=-\frac{\lambda+\rho}{2} \ v\otimes_1 \dgal{w,v}
- \frac{\lambda-\rho}{2} \ 1\otimes_1(v \ast \dgal{w,v})\,. \label{Eq:DJ2f}
\end{align}
\end{subequations}

To get interesting new examples, we assume that $(\lambda,\rho)\neq(0,0)$.

A first classification on $A_2$ is possible when
\begin{equation} \label{Eq:CL1vv}
 \dgal{v,v}=0, \qquad \dgal{w,w}=0,
\end{equation}
so that \eqref{Eq:DJ2a} holds directly.
We shall make the additional assumption that the mixed term $\dgal{v,w}$ is a linear combination of the $4$ quadratic expressions appearing in \eqref{Eq:mixDA-2}. This yields by \eqref{Eq:mixDA-2},
\begin{subequations} \label{Eq:CL1mix}
 \begin{align}
\dgal{v,w}&=-\frac{\gamma_1}{2} \, v \otimes w + \frac{\gamma_2}{2} \, w \otimes v
- \frac{\gamma_3}{2} \, 1 \otimes vw + \frac{\gamma_4}{2} \, wv \otimes 1\,, \label{Eq:CL1vw} \\
\dgal{w,v}&=-\frac{\lambda + \rho +\gamma_2}{2} \, v \otimes w + \frac{\lambda + \rho +\gamma_1}{2}\, w \otimes v
\nonumber \\
& \quad - \frac{\lambda - \rho +\gamma_4}{2} \, 1 \otimes wv + \frac{\lambda - \rho +\gamma_3}{2} \, vw \otimes 1\,, \label{Eq:CL1wv}
\end{align}
\end{subequations}
for some $\Gamma=(\gamma_1,\ldots,\gamma_4) \in \K^4$.

\begin{lem} \label{Lem:CL1}
 Assume that $\dgal{-,-}$ is nonzero and satisfies \eqref{Eq:CL1vv} and \eqref{Eq:CL1mix}.
Then the Poisson condition \eqref{Eq:DJ2b} holds if and only if we are in one of the following cases:
\begin{enumerate}
 \item $\rho=-\lambda$ and $\Gamma=(0,0,-2\lambda,-2\lambda)$, or $(0,0,-2\lambda,0)$, or $(0,0,0,-2\lambda)$, or $(0,0,0,0)$;
 \item $\rho=+\lambda$ and $\Gamma=(-2\lambda,-2\lambda,0,0)$, or $(-2\lambda,0,0,0)$, or $(0,-2\lambda,0,0)$, or $(0,0,0,0)$;
 \item $\rho=\pm\lambda$ and $\Gamma=(0,-2\lambda,-2\lambda,0)$ or $(-2\lambda,0,0,-2\lambda)$.
\end{enumerate}
\end{lem}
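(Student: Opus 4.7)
The plan is a direct computation of the two Jacobi-type identities~\eqref{Eq:DJ2b}, leveraging the simplifications afforded by $\dgal{v,v}=\dgal{w,w}=0$ and then reading off a linear system in the parameters $\gamma_1,\ldots,\gamma_4$ whose coefficients depend on $\lambda,\rho$.

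First, because $\dgal{v,v}=0$, the third summand $\dgal{\dgal{v,v},w}_L$ of $\DJac(v,v,w)$ vanishes, and similarly the third summand of $\DJac(w,w,v)$ vanishes. Hence checking~\eqref{Eq:DJ2b} reduces to verifying
\begin{equation*}
 \dgal{v,\dgal{v,w}}_L - \dgal{v,\dgal{v,w}}_R = -\lambda\, v\otimes_1 \dgal{v,w},
\end{equation*}
and the analogous identity obtained by swapping $v \leftrightarrow w$ and $\lambda \leftrightarrow \rho$. Using the Leibniz rules~\eqref{Eq:Lei}, we have $\dgal{v,vw}=(v\otimes 1)\dgal{v,w}$ and $\dgal{v,wv}=\dgal{v,w}(1\otimes v)$, so plugging the four-term expression~\eqref{Eq:CL1vw} of $\dgal{v,w}$ into $\dgal{v,-}_L$ and $\dgal{v,-}_R$ expresses every contribution in the basis of $A^{\otimes 3}$ consisting of triples of monomials of total degree~$3$. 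The same computation for $\dgal{w,\dgal{w,v}}$ requires~\eqref{Eq:CL1wv} instead, and each term now involves the coefficients $\lambda\pm\rho+\gamma_k$.

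Next, matching coefficients of the monomial triples appearing (these are a small finite list, e.g.\ $v\otimes v\otimes w$, $v\otimes w \otimes v$, $w\otimes v \otimes v$, $1\otimes v^2\otimes w$, $v^2\otimes 1\otimes w$, $1\otimes v\otimes vw$, $v\otimes 1\otimes vw$, $wv\otimes v \otimes 1$, and so on) yields a polynomial system in $\gamma_1,\gamma_2,\gamma_3,\gamma_4$ with coefficients quadratic in $\lambda$ and $\rho$. I expect two families of equations: one coming from the ``$V\otimes V$-part'' that already involves the factor $\lambda+\rho$ appearing in~\eqref{Eq:CL1wv}, and one coming from the ``mixed part'' involving $\lambda-\rho$. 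Putting the two Poisson conditions together, the combined system should factor through $\lambda^2-\rho^2$, forcing the dichotomy $\rho=\lambda$ or $\rho=-\lambda$ as soon as some $\gamma_k$ is nonzero (and the all-zero case appearing in both families~(1) and~(2) corresponds to $\Gamma=0$).

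The main obstacle is bookkeeping: there are four parameters $\gamma_k$, two weights $\lambda,\rho$, and on the order of a dozen independent tensor monomials to track for each of the two identities, so it is essential to organise the calculation systematically, term by term, rather than expanding everything at once. Once that is done, the three listed cases should fall out as the complete solution set, with cases (1)--(2) exhausting the situations where exactly one of $\lambda+\rho$ or $\lambda-\rho$ vanishes, and case (3) collecting the two ``mixed'' solutions that survive at either sign of $\rho/\lambda$ thanks to special cancellations between the $V\otimes V$ and the $(1\otimes vw, wv\otimes 1)$ contributions.
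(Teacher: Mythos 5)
Your setup is sound and follows the same route as the paper (expand \eqref{Eq:DJ2b} using \eqref{Eq:dbr3}, \eqref{Eq:Lei}, \eqref{Eq:CL1vv}, \eqref{Eq:CL1mix}, note that the third summand of each double Jacobiator vanishes, and match coefficients of tensor monomials). But the proposal stops exactly where the lemma begins: it never writes down the resulting polynomial system, and for an ``if and only if'' classification that system \emph{is} the content. Concretely, the first identity in \eqref{Eq:DJ2b} yields $\gamma_1\gamma_3=0$, $\gamma_2\gamma_4=0$ and $\gamma_i\bigl(\lambda+\tfrac{\gamma_i}{2}\bigr)=0$ for all $i$, hence $\gamma_i\in\{0,-2\lambda\}$ and a list of nine admissible quadruples; the second identity yields $(\lambda+\rho+\gamma_2)(\lambda-\rho+\gamma_3)=0$, $(\lambda+\rho+\gamma_1)(\lambda-\rho+\gamma_4)=0$ and $(\lambda+\rho+\gamma_i)(\lambda-\rho+\gamma_i)=0$, and the final case-by-case check of which quadruples survive for $\rho=\lambda$ versus $\rho=-\lambda$ is what produces exactly the ten cases (1)--(3). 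Asserting that ``the three listed cases should fall out as the complete solution set'' without deriving and solving these equations proves neither direction of the equivalence.

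Moreover, two of your structural predictions are wrong, which shows the plan as stated would not lead you to the correct statement. First, you claim $\rho=\pm\lambda$ is forced ``as soon as some $\gamma_k$ is nonzero,'' suggesting $\Gamma=0$ escapes the dichotomy; in fact the condition $(\lambda+\rho+\gamma_i)(\lambda-\rho+\gamma_i)=0$ with $\gamma_i\in\{0,-2\lambda\}$ forces $\lambda^2=\rho^2$ in every case, including $\Gamma=(0,0,0,0)$ (this is why, under the nonvanishing hypothesis on $\dgal{-,-}$, the quadruple $\Gamma=0$ appears in the statement only together with $\rho=\pm\lambda$). Second, cases (1)--(2) do not ``exhaust the situations where exactly one of $\lambda+\rho$ or $\lambda-\rho$ vanishes'' with case (3) living elsewhere: all three cases occur at $\rho=\pm\lambda$; case (3) simply collects the two quadruples $(0,-2\lambda,-2\lambda,0)$ and $(-2\lambda,0,0,-2\lambda)$ that are compatible with \emph{both} signs. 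Relatedly, the equations coming from the second identity do not split into a ``$V\otimes V$-part with $\lambda+\rho$'' and a ``mixed part with $\lambda-\rho$'': the cross conditions such as $(\lambda+\rho+\gamma_2)(\lambda-\rho+\gamma_3)=0$ couple the two types of coefficients, and they are precisely what eliminates some quadruples at one sign of $\rho/\lambda$ but not the other. To complete the proof you must carry out the coefficient matching explicitly and run the finite case analysis.
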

\begin{proof}
Let us examine the first equality in \eqref{Eq:DJ2b}. To write the left-hand side, we compute thanks to \eqref{Eq:dbr3}, \eqref{Eq:CL1vv} and \eqref{Eq:CL1vw}:
\begin{align*}
 \dgal{v,\dgal{v,w}}_L=& -\frac{\gamma_1\gamma_2}{4} v\otimes w\otimes v
 + \frac{\gamma_2^2}{4} w\otimes v\otimes v
 -\frac{\gamma_3\gamma_2}{4} 1\otimes vw\otimes v
 + \frac{\gamma_4\gamma_2}{4} wv\otimes 1\otimes v \\
 -& \ \frac{\gamma_1\gamma_4}{4} v\otimes wv\otimes 1
 + \frac{\gamma_2\gamma_4}{4} w\otimes v^2\otimes 1
 - \frac{\gamma_3\gamma_4}{4} 1\otimes vwv\otimes 1
 + \frac{\gamma_4^2}{4} wv\otimes v\otimes 1 \,, \\
\dgal{v,\dgal{v,w}}_R=& \frac{\gamma_1^2}{4} v\otimes v\otimes w
 - \frac{\gamma_2\gamma_1}{4} v\otimes w\otimes v
 +\frac{\gamma_3\gamma_1}{4} v\otimes 1\otimes vw
 - \frac{\gamma_4\gamma_1}{4} v\otimes wv\otimes 1 \\
 +& \ \frac{\gamma_1\gamma_3}{4} 1\otimes v^2\otimes w
 - \frac{\gamma_2\gamma_3}{4} 1\otimes vw\otimes v
 + \frac{\gamma_3^2}{4} 1\otimes v \otimes vw
 - \frac{\gamma_4 \gamma_3}{4} 1\otimes vwv\otimes 1 \,,
\end{align*}
and $\dgal{\dgal{v,v},w}_L=0$. So \eqref{Eq:DJac} gives for the right-hand side
\begin{equation*}
 \begin{aligned}
\DJac(v,v,w)=& + \frac{\gamma_2^2}{4} w\otimes v\otimes v  + \frac{\gamma_4\gamma_2}{4} (wv\otimes 1\otimes v+ w\otimes v^2\otimes 1)
+ \frac{\gamma_4^2}{4} wv\otimes v\otimes 1 \\
&-\frac{\gamma_1^2}{4} v\otimes v\otimes w -\frac{\gamma_3\gamma_1}{4} (v\otimes 1\otimes vw+ 1\otimes v^2\otimes w)
- \frac{\gamma_3^2}{4} 1\otimes v \otimes vw \,.
 \end{aligned}
\end{equation*}
By \eqref{Eq:CL1vw}, we get for the right-hand side
\begin{equation*}
 -\lambda \ v\otimes_1 \dgal{v,w}=\frac{\lambda\gamma_1}{2} v\otimes v\otimes w
 - \frac{\lambda\gamma_2}{2} w\otimes v\otimes v + \frac{\lambda\gamma_3}{2} 1\otimes v\otimes vw
 - \frac{\lambda\gamma_4}{2} wv\otimes v\otimes 1\,.
\end{equation*}
Matching coefficients, the first equality in \eqref{Eq:DJ2b} holds if and only if
\begin{equation*}
 \gamma_1\gamma_3=0, \quad \gamma_2\gamma_4=0, \quad
 \gamma_i \left(\lambda+\frac{\gamma_i}{2}\right)=0, \ 1\leq i \leq 4\,,
\end{equation*}
which holds if and only if $\Gamma$ is one of the following quadruples:
\begin{equation} \label{Eq:CL1-cond1}
 \begin{aligned}
(0,0,0,0), \quad  (-2\lambda,0,0,0), \quad (0,-2\lambda,0,0), \quad (0,0,-2\lambda,0), \quad (0,0,0,-2\lambda), \\
(-2\lambda,-2\lambda,0,0), \quad (-2\lambda,0,0,-2\lambda), \quad (0,-2\lambda,-2\lambda,0), \quad (0,0,-2\lambda,-2\lambda).
 \end{aligned}
\end{equation}
(So far, there is no condition on $\lambda,\rho$).
Analogous computations entail that the second equality in \eqref{Eq:DJ2b} holds if and only if
\begin{equation} \label{Eq:CL1-cond2}
\begin{aligned}
  (\lambda+\rho+\gamma_2)(\lambda-\rho+\gamma_3)=0,& \qquad (\lambda+\rho+\gamma_1)(\lambda-\rho+\gamma_4)=0, \\
 (\lambda+\rho+\gamma_i)(\lambda-\rho+\gamma_i)=0,& \quad 1\leq i \leq 4\,.
\end{aligned}
\end{equation}
As $\gamma_i\in\{0,-2\lambda\}$, the last condition yields $\lambda^2-\rho^2=0$.
To conclude, it remains to check when $\rho=\lambda$ or $\rho=-\lambda$ which of the cases from \eqref{Eq:CL1-cond1} satisfy the first two equalities in \eqref{Eq:CL1-cond2}; we end up with the different cases \emph{(1)-(3)} from the statement.
\end{proof}

\begin{prop}  \label{Pr:CL1}
 Let $(A_2,\dgal{-,-})$ be a mixed double algebra of weight $(\lambda,\rho)$ such that \eqref{Eq:CL1vv} and \eqref{Eq:CL1mix} hold.
 It is Poisson in the following situations:
\begin{enumerate}
 \item $\rho=-\lambda$ and \eqref{Eq:CL1mix} reads for some $\alpha,\beta\in \{0,\lambda\}$
\begin{equation} \label{Eq:CL1-1}
\dgal{v,w}=\alpha  \, 1 \otimes vw - \beta \, wv \otimes 1 ,  \quad
\dgal{w,v}= (-\lambda + \beta) \, 1 \otimes wv + (\lambda - \alpha) \, vw \otimes 1 ;
\end{equation}
 \item $\rho=+\lambda$ and \eqref{Eq:CL1mix} reads for some $\tilde \alpha,\tilde \beta\in \{0,\lambda\}$
\begin{equation} \label{Eq:CL1-2}
\dgal{v,w}= \tilde \alpha \, v \otimes w - \tilde \beta \, w \otimes v,  \quad
\dgal{w,v}=(-\lambda+\tilde \beta) \, v \otimes w +  (\lambda-\tilde \alpha)\, w \otimes v .
\end{equation}
\end{enumerate}
\end{prop}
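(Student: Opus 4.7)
The strategy builds on Lemma~\ref{Lem:CL1}, which has already imposed the Poisson identity \eqref{Eq:DJ2b} and narrowed the quadruple $\Gamma=(\gamma_1,\gamma_2,\gamma_3,\gamma_4)$ to the finite list there. What remains is to verify the four Poisson conditions \eqref{Eq:DJ2c}, \eqref{Eq:DJ2d}, \eqref{Eq:DJ2e}, \eqref{Eq:DJ2f}; the condition \eqref{Eq:DJ2a} holds trivially from \eqref{Eq:CL1vv}.

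First I would exploit the vanishing $\dgal{v,v}=\dgal{w,w}=0$ to simplify: \eqref{Eq:DJ2c} collapses to $\DJac(v,w,v)=0$, \eqref{Eq:DJ2e} to $\DJac(w,v,w)=0$, and in the triple brackets of \eqref{Eq:DJ2d}, \eqref{Eq:DJ2f} every term containing $\dgal{v,v}$ or $\dgal{w,w}$ drops out. This leaves four identities in $A_2^{\otimes 3}$ comparing an explicit $\DJac$ expansion with a simple right-hand side built from $\dgal{v,w}$ or $\dgal{w,v}$.

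Next I would compute each remaining $\DJac$ as a linear combination of rank-$3$ monomials, following exactly the pattern used in the proof of Lemma~\ref{Lem:CL1} for $\DJac(v,v,w)$: expand each occurrence of $\dgal{v,w}$ and $\dgal{w,v}$ via \eqref{Eq:CL1vw}--\eqref{Eq:CL1wv}, apply the definitions \eqref{Eq:dbr3}, and collect. Since monomials such as $v\otimes w\otimes v$, $wv\otimes v\otimes 1$, $1\otimes vwv\otimes 1$ are linearly independent in the free algebra, matching coefficients against the right-hand sides of \eqref{Eq:DJ2c}--\eqref{Eq:DJ2f} reduces each condition to a finite system of polynomial relations in $\gamma_1,\ldots,\gamma_4,\lambda,\rho$.

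Then I would specialize to the candidates surviving Lemma~\ref{Lem:CL1}. Case~(1) of the proposition corresponds to case~(1) of the lemma, with the identification $\alpha=-\gamma_3/2$, $\beta=-\gamma_4/2$, producing \eqref{Eq:CL1-1}; symmetrically, case~(2) of the proposition corresponds to case~(2) of the lemma via $\tilde\alpha=-\gamma_1/2$, $\tilde\beta=-\gamma_2/2$, producing \eqref{Eq:CL1-2}. Substituting $\rho=\mp\lambda$ together with $\gamma_i\in\{0,-2\lambda\}$ into the general formulas of the previous step, essentially every monomial has a coefficient that vanishes automatically, and a short check confirms that the residual coefficients match their prescribed right-hand sides. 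The two extra quadruples of case~(3) of the lemma deserve a separate inspection, since they are conspicuously absent from the conclusion: I would expect to exhibit, in at least one of the four Jacobiators, a surviving monomial whose coefficient is non-zero on the left but vanishes on the right, thereby ruling these cases out.

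The main obstacle I anticipate is bookkeeping rather than conceptual. Each double Jacobiator spans on the order of a dozen rank-$3$ monomials, and the right-hand sides of \eqref{Eq:DJ2c}--\eqref{Eq:DJ2f} mix $\otimes_1$ with the inner multiplication $\ast$ in placements where a single sign slip could mimic either a genuine identity or a genuine obstruction. The cleanest organization is to split monomials by \emph{type} --- purely tensorial expressions like $v\otimes w\otimes v$ versus those containing quadratic factors like $1\otimes vwv\otimes 1$ or $wv\otimes v\otimes 1$ --- since this decouples the analysis of the two cases: case~(2) involves only tensorial monomials and mirrors the $\lambda$-double Lie algebra computation of Goncharov-Gubarev \cite[Thm.~9]{GG22}, whereas case~(1) is dual to it with inner-bimodule factors playing the role of tensor slots.
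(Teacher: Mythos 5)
Your proposal is correct and takes essentially the same route as the paper: after invoking Lemma~\ref{Lem:CL1}, the paper expands $\DJac(v,w,v)$ (its \eqref{Eq:CL1-cond3}), matches coefficients to confirm cases \emph{(1)}--\emph{(2)} under exactly your identification $\alpha=-\gamma_3/2$, $\beta=-\gamma_4/2$, $\tilde\alpha=-\gamma_1/2$, $\tilde\beta=-\gamma_2/2$, and uses that same condition \eqref{Eq:DJ2c} to rule out case \emph{(3)}, leaving \eqref{Eq:DJ2d}--\eqref{Eq:DJ2f} as the routine verification you also describe. No gap.
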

\begin{proof}
Note that \eqref{Eq:CL1-1} and \eqref{Eq:CL1-2} correspond respectively to cases \emph{(1)} and \emph{(2)} in Lemma \ref{Lem:CL1}.
Let us verify that \eqref{Eq:DJ2c} is satisfied in those two cases, but that case \emph{(3)} must be discarded.

The right-hand side of \eqref{Eq:DJ2c} identically vanishes as $\dgal{v,v}=0$. The left-hand side of \eqref{Eq:DJ2c} becomes by \eqref{Eq:dbr3}, \eqref{Eq:CL1vv} and \eqref{Eq:CL1vw}:
\begin{equation}  \label{Eq:CL1-cond3}
 \begin{aligned}
\DJac(v,w,v)=& -\frac{(\lambda+\rho+\gamma_1)\gamma_1}{4} v\otimes w\otimes v
- \frac{(\lambda+\rho+\gamma_1)\gamma_3}{4} 1\otimes vw\otimes v \\
&- \frac{(\lambda-\rho+\gamma_3)\gamma_1}{4} v^2\otimes w\otimes 1
- \frac{(\lambda-\rho+\gamma_3)\gamma_3}{4} v\otimes vw\otimes 1 \\
&+ \frac{(\lambda+\rho+\gamma_2)\gamma_2}{4} v\otimes v\otimes w
+ \frac{(\lambda+\rho+\gamma_2)\gamma_4}{4} v^2 \otimes 1\otimes w \\
&+ \frac{(\lambda-\rho+\gamma_4)\gamma_2}{4} 1\otimes v\otimes wv
+ \frac{(\lambda-\rho+\gamma_4)\gamma_4}{4} v\otimes 1\otimes wv
\,.
 \end{aligned}
\end{equation}

If $\rho=-\lambda$, vanishing of \eqref{Eq:CL1-cond3} amounts to:
\begin{align*}
 \gamma_1^2=0, \ \gamma_2^2=0,& \ \gamma_1\gamma_3=0, \ \gamma_2\gamma_4=0, \\
 \gamma_3(\gamma_3+2\lambda)=0, \ \gamma_4(\gamma_4+2\lambda)=0,& \ \gamma_1(\gamma_3+2\lambda)=0, \ \gamma_2(\gamma_4+2\lambda)=0,
\end{align*}
This is satisfied when $\gamma_1=\gamma_2=0$ and $\gamma_3,\gamma_4 \in \{0,-2\lambda\}$ (i.e. case \emph{(1)} in Lemma \ref{Lem:CL1})
but this fails for $\Gamma=(0,-2\lambda,-2\lambda,0),(-2\lambda,0,0,-2\lambda)$ (i.e. case \emph{(3)} in Lemma \ref{Lem:CL1}).

If $\rho=+\lambda$, vanishing of \eqref{Eq:CL1-cond3} amounts to:
\begin{align*}
 \gamma_3^2=0, \ \gamma_4^2=0,& \ \gamma_1\gamma_3=0, \ \gamma_2\gamma_4=0, \\
 \gamma_1(\gamma_1+2\lambda)=0, \ \gamma_2(\gamma_2+2\lambda)=0,& \ \gamma_3(\gamma_1+2\lambda)=0, \ \gamma_4(\gamma_2+2\lambda)=0,
\end{align*}
This is satisfied when $\gamma_3=\gamma_4=0$ and $\gamma_1,\gamma_2 \in \{0,-2\lambda\}$ (i.e. case \emph{(2)} in Lemma \ref{Lem:CL1})
but this fails for $\Gamma=(0,-2\lambda,-2\lambda,0),(-2\lambda,0,0,-2\lambda)$ (i.e. case \emph{(3)} in Lemma \ref{Lem:CL1}).

We leave to the reader the standard but tedious task of checking that \eqref{Eq:DJ2d}--\eqref{Eq:DJ2f} also hold for cases \emph{(1)} and \emph{(2)} from Lemma \ref{Lem:CL1}.
\end{proof}

\begin{cor}
 The $8$ operations considered in Proposition \ref{Pr:CL1} define modified double Poisson brackets.
\end{cor}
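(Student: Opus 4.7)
The corollary is essentially a packaging result: the hard work has already been done in Proposition~\ref{Pr:CL1} (verifying the mixed double Poisson algebra axioms \eqref{Eq:mixDA-2} and \eqref{Eq:DJ2a}--\eqref{Eq:DJ2f} for the eight parameter choices) and in Theorem~\ref{Thm:MAIN} (showing that any mixed double Poisson algebra is automatically a modified double Poisson bracket). So my plan is simply to invoke these two results in sequence.

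More precisely, the four operations in part~(1) of Proposition~\ref{Pr:CL1}, parametrised by $(\alpha,\beta)\in \{0,\lambda\}^2$, are mixed double Poisson algebras on $A_2$ of weight $(\lambda,-\lambda)$; the four operations in part~(2), parametrised by $(\tilde\alpha,\tilde\beta)\in \{0,\lambda\}^2$, are mixed double Poisson algebras on $A_2$ of weight $(\lambda,\lambda)$. This accounts for all $4+4=8$ operations in the statement. Each such operation satisfies the Leibniz rules by construction (they are defined on generators and extended via \eqref{Eq:Lei}), satisfies the mixed skew-symmetry \eqref{Eq:intr1} by Lemma~\ref{Lem:CL1} combined with the restriction to cases (1)--(2), and satisfies the Poisson property \eqref{Eq:DJac-v} by the verifications carried out in Proposition~\ref{Pr:CL1}.

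Then I would apply Theorem~\ref{Thm:MAIN} directly: the Leibniz rules hold by definition, the $H_0$-skew-symmetry \eqref{Eq:Skew} follows from Corollary~\ref{Cor:skew}, and the Jacobi identity \eqref{Eq:Jac} follows from Proposition~\ref{Pr:Jac}. Thus each of the eight operations is a modified double Poisson bracket. There is no real obstacle here—the whole point of Theorem~\ref{Thm:MAIN} is precisely to reduce the verification of the modified double Poisson axioms on all of $A$ to the finite list of conditions on generators already checked in Proposition~\ref{Pr:CL1}.
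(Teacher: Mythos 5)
Your proposal is correct and follows exactly the paper's route: the paper's proof is the one-line observation that the corollary follows from Theorem~\ref{Thm:MAIN}, since Proposition~\ref{Pr:CL1} already exhibits the eight operations as mixed double Poisson algebras. Your write-up just spells out the same chain (Proposition~\ref{Pr:CL1}, then Corollary~\ref{Cor:skew} and Proposition~\ref{Pr:Jac} via Theorem~\ref{Thm:MAIN}) in more detail.
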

\begin{proof}
 This follows from Theorem \ref{Thm:MAIN}.
\end{proof}

\begin{rem}
The $8$ operations considered in Proposition \ref{Pr:CL1} are members of a (conjectural) classification by Arthamonov \cite{Art24} of $12$ modified double Poisson brackets on $\K\langle v,w\rangle$ stable under the $(\K^\times)^2$-action by automorphisms
 \begin{equation}
  (\zeta_1,\zeta_2) \cdot (v,w) = (\zeta_1 v, \zeta_2 w)\,, \quad \zeta_1,\zeta_2 \in \K^\times\,.
 \end{equation}
The remaining $4$ structures defined by Arthamonov are mixed double algebras, hence they define mixed double brackets by Corollary~\ref{Cor:skew}.
However, they do not satisfy our double Jacobi identity \eqref{Eq:DJac-v}, hence we can not prove that the Jacobi identity \eqref{Eq:Jac} always hold for these modified double brackets.
\end{rem}

\subsection{Case \texorpdfstring{$d\geq 3$}{d>=3}}

For $d\geq 3$, fix $\lambda_{1},\ldots,\lambda_{d}\in \K$ and $A_d=\K\langle v_1,\ldots,v_d\rangle$.
\begin{lem} \label{Lem:CL3}
Let $(A_d,\dgal{-,-})$ be a mixed double algebra of weight $(\lambda_{1},\ldots,\lambda_{d})$.
If
\begin{equation} \label{Eq:CL3}
 \begin{aligned}
\dgal{v_i,v_i}&=0, \qquad \dgal{v_j,v_j}=0\,,  \\
\dgal{v_i,v_j}&=-\frac{\gamma_1}{2} \, v_i \otimes v_j + \frac{\gamma_2}{2} \, v_j \otimes v_i
- \frac{\gamma_3}{2} \, 1 \otimes v_i v_j + \frac{\gamma_4}{2} \, v_j v_i \otimes 1\,, \\
\dgal{v_j,v_i}&=-\frac{\lambda_{i} + \lambda_{j} +\gamma_2}{2} \, v_i \otimes v_j
+ \frac{\lambda_{i} + \lambda_{j} +\gamma_1}{2}\, v_j \otimes v_i  \\
& \quad - \frac{\lambda_{i} - \lambda_{j} +\gamma_4}{2} \, 1 \otimes v_j v_i + \frac{\lambda_{i} - \lambda_{j} +\gamma_3}{2} \, v_i v_j \otimes 1\,,
\end{aligned}
\end{equation}
for some distinct $i,j\in \{1,\ldots,d\}$ and $\Gamma=(\gamma_1,\ldots,\gamma_4) \in \K^4$,
then $\lambda_{i}^2-\lambda_{j}^2=0$, and we are in one of the following two situations:
\begin{enumerate}
 \item  $\gamma_1=\gamma_2=0$, $\gamma_3,\gamma_4 \in \{0,-2\lambda_{i}\}$ and $\lambda_{j}=-\lambda_{i}$;
 \item $\gamma_3=\gamma_4=0$, $\gamma_1,\gamma_2 \in \{0,-2\lambda_{i}\}$ and $\lambda_{j}=\lambda_{i}$.
\end{enumerate}
\end{lem}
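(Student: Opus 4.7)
The plan is to adapt the proof of Lemma~\ref{Lem:CL1} verbatim, with $(v_i,v_j)$ in place of $(v,w)$ and $(\lambda_i,\lambda_j)$ in place of $(\lambda,\rho)$. A first observation is that the mixed-double-algebra axiom \eqref{Eq:intr1} is automatically satisfied by \eqref{Eq:CL3} for arbitrary $\Gamma\in\K^4$ and arbitrary $\lambda_i,\lambda_j\in\K$, as one sees by summing $\dgal{v_i,v_j}$ with the $\circ$-image of $\dgal{v_j,v_i}$. Thus all nontrivial constraints in the conclusion must stem from the Poisson condition \eqref{Eq:DJac-v}. Since \eqref{Eq:CL3} only involves $v_i,v_j$ and their products, the other generators $v_k$ do not appear in any of the relevant triple brackets, so the analysis is independent of $d$ and reduces to the situation of Lemma~\ref{Lem:CL1}.

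First I would compute $\DJac(v_i,v_i,v_j)$ by expanding \eqref{Eq:DJac} with \eqref{Eq:CL3}. The term $\dgal{\dgal{v_i,v_i},v_j}_L$ drops out because $\dgal{v_i,v_i}=0$, leaving $\dgal{v_i,\dgal{v_i,v_j}}_L-\dgal{v_i,\dgal{v_i,v_j}}_R$. Matching the outcome against the RHS $-\lambda_i\, v_i\otimes_1\dgal{v_i,v_j}$ of \eqref{Eq:DJac-v} and collecting coefficients of the independent degree-$3$ monomials gives the relations $\gamma_1\gamma_3=0$, $\gamma_2\gamma_4=0$ and $\gamma_k(\gamma_k+2\lambda_i)=0$ for $1\leq k\leq 4$, so each $\gamma_k\in\{0,-2\lambda_i\}$. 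Running the symmetric argument for $\DJac(v_j,v_j,v_i)$ against $-\lambda_j\, v_j\otimes_1\dgal{v_j,v_i}$ yields $(\lambda_i+\lambda_j+\gamma_2)(\lambda_i-\lambda_j+\gamma_3)=0$, $(\lambda_i+\lambda_j+\gamma_1)(\lambda_i-\lambda_j+\gamma_4)=0$, together with $(\lambda_i+\lambda_j+\gamma_k)(\lambda_i-\lambda_j+\gamma_k)=0$ for each $k$. Substituting the allowed values $\gamma_k\in\{0,-2\lambda_i\}$ into this last family forces $\lambda_i^2-\lambda_j^2=0$, giving $\lambda_j=\pm\lambda_i$.

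Splitting into the two sign branches and solving the combined system reproduces exactly the three possibilities (1), (2), (3) of Lemma~\ref{Lem:CL1}: the two ``diagonal'' families of the present statement, plus the ``crossed'' exceptions $\Gamma=(0,-2\lambda_i,-2\lambda_i,0)$ and $\Gamma=(-2\lambda_i,0,0,-2\lambda_i)$. The key remaining step, and the only place where the lemma goes beyond the $d=2$ analysis, is to eliminate the two crossed quadruples. For this I would invoke the Poisson condition applied to the triple $(v_i,v_j,v_i)$: since $\dgal{v_i,v_i}=0$, the RHS of \eqref{Eq:DJac-v} vanishes, so we must have $\DJac(v_i,v_j,v_i)=0$. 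Expanding this double Jacobiator via \eqref{Eq:CL3} reproduces, mutatis mutandis, the identity \eqref{Eq:CL1-cond3} appearing in the proof of Proposition~\ref{Pr:CL1}; its vanishing rules out both crossed quadruples in each sign branch, leaving precisely the two families (1) and (2) of the statement.

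The main obstacle is the coefficient bookkeeping in the tensor expansions of $\DJac(v_i,v_i,v_j)$, $\DJac(v_j,v_j,v_i)$ and $\DJac(v_i,v_j,v_i)$: each produces a sum of roughly eight independent monomials of total degree three, so one must organise the computations carefully to read off the full polynomial system on $(\gamma_1,\dots,\gamma_4,\lambda_i,\lambda_j)$. Nothing conceptually new is required beyond the $d=2$ case, and in fact the computation is formally identical because all three Poisson conditions only see the generators $v_i,v_j$ themselves.
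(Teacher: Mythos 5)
Your proposal is correct and follows essentially the same route as the paper: the paper proves Lemma~\ref{Lem:CL3} by relabelling $(v,w):=(v_i,v_j)$, $(\lambda,\rho):=(\lambda_i,\lambda_j)$ and invoking Proposition~\ref{Pr:CL1}, whose underlying computations are exactly the ones you describe --- the constraints coming from $\DJac(v_i,v_i,v_j)$ and $\DJac(v_j,v_j,v_i)$ (Lemma~\ref{Lem:CL1}) forcing $\gamma_k\in\{0,-2\lambda_i\}$ and $\lambda_i^2=\lambda_j^2$, and the vanishing of $\DJac(v_i,v_j,v_i)$, i.e.\ of \eqref{Eq:CL1-cond3}, eliminating the two crossed quadruples of case \emph{(3)}. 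Your observation that \eqref{Eq:intr1} is automatic for any $\Gamma$, so that all constraints come from the Poisson property \eqref{Eq:DJac-v} restricted to triples in $\{v_i,v_j\}$ (hence independent of $d$), is also precisely how the paper uses and justifies the statement.
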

\begin{proof}
This is a reformulation of Proposition \ref{Pr:CL1} with $v:=v_i$ and $w:=v_j$.
\end{proof}

If we assume that \eqref{Eq:CL3} holds for all $1\leq i<j\leq d$, we are either
in the case of a double Poisson bracket (when $\lambda_{j}=0$ for some $j$, hence for all), or the weight is of the form
\begin{equation*}
 \lambda\, (1,\epsilon_2,\ldots,\epsilon_d)\,, \quad \lambda\in \K^\times, \quad \epsilon_2,\ldots,\epsilon_d =\pm 1.
\end{equation*}
Combining Observations (3) and (4) after Definition \ref{Def:MixDA}, up to rescaling and permutations of generators, the weight must be of the form
$\mathbf{1}_{\delta,d}$ \eqref{Eq:1delD} with $1 \leq \delta \leq \lfloor d/2\rfloor +1$.
For $d=3$, we therefore have $2$ distinct cases to analyze: the weights $(1,1,1)$ and $(1,1,-1)$.

\subsubsection*{$d=3$, weight $(1,1,1)$}

\begin{prop}  \label{Pr:CL3a}
 Let $(A_3,\dgal{-,-})$ be a mixed double algebra of weight $(1,1,1)$ such that \eqref{Eq:CL3} holds for any $1\leq i<j\leq 3$.
Then it is Poisson when it is given by
\begin{equation} \label{Eq:CL3a}
 \begin{aligned}
  &\dgal{v_1,v_1}=0, \quad \dgal{v_2,v_2}=0, \quad \dgal{v_3,v_3}=0, \\
&\dgal{v_1,v_2}= \tilde \alpha_3 \, v_1 \otimes v_2 - \tilde \beta_3 \, v_2 \otimes v_1,  \\
&\dgal{v_2,v_1}=(-1+\tilde \beta_3) \, v_1 \otimes v_2 +  (1-\tilde \alpha_3)\, v_2 \otimes v_1\,, \\
&\dgal{v_1,v_3}= \tilde \alpha_2 \, v_1 \otimes v_3 - \tilde \beta_2 \, v_3 \otimes v_1,  \\
&\dgal{v_3,v_1}=(-1+\tilde \beta_2) \, v_1 \otimes v_3 +  (1-\tilde \alpha_2)\, v_3 \otimes v_1\,, \\
&\dgal{v_2,v_3}= \tilde \alpha_1 \, v_2 \otimes v_3 - \tilde \beta_1 \, v_3 \otimes v_2,  \\
&\dgal{v_3,v_2}=(-1+\tilde \beta_1) \, v_2 \otimes v_3 +  (1-\tilde \alpha_1)\, v_3 \otimes v_2\,,
 \end{aligned}
\end{equation}
for $\tilde\alpha_i,\tilde\beta_i \in \{0,1\}$ subject to the following $2$ conditions:
\begin{subequations}
 \begin{align}
  \tilde\alpha_1 \tilde\alpha_2 + \tilde\alpha_2 \tilde\alpha_3 - \tilde\alpha_1 \tilde\alpha_3 - \tilde\alpha_2 &=0\,, \label{Eq:talph} \\
  \tilde\beta_1 \tilde\beta_2 + \tilde\beta_2 \tilde\beta_3 - \tilde\beta_1 \tilde\beta_3 - \tilde\beta_2 &=0\,. \label{Eq:tbeta}
 \end{align}
\end{subequations}
\end{prop}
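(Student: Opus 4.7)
The plan has four steps. First, I would apply Lemma \ref{Lem:CL3} to every pair $(v_i,v_j)$ with $1\leq i<j\leq 3$. Since $\lambda_1=\lambda_2=\lambda_3=1$, each instance forces case~(2) of that lemma: $\gamma_3=\gamma_4=0$ and $\gamma_1,\gamma_2\in\{0,-2\}$. Writing $\gamma_1=-2\tilde\alpha_k$ and $\gamma_2=-2\tilde\beta_k$, where $k$ denotes the unique element of $\{1,2,3\}\setminus\{i,j\}$, one recovers exactly the shape \eqref{Eq:CL3a} with six parameters $\tilde\alpha_1,\tilde\alpha_2,\tilde\alpha_3,\tilde\beta_1,\tilde\beta_2,\tilde\beta_3\in\{0,1\}$, and with $\dgal{v_i,v_i}=0$ for every $i$.

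Next, by Definition \ref{Def:PwDA} it suffices to verify the Poisson property \eqref{Eq:DJac-v} on every ordered triple $(v_i,v_j,v_k)$ of generators. If $|\{i,j,k\}|\leq 2$, the whole computation takes place inside the subalgebra $\K\langle v_i,v_j\rangle$, on which $\dgal{-,-}$ restricts to an operation of the form \eqref{Eq:CL1-2} of Proposition \ref{Pr:CL1}; this is already a mixed double Poisson algebra of weight $(1,1)$, so \eqref{Eq:DJac-v} holds automatically in that case. The nontrivial task is therefore to treat the six permutations of $(1,2,3)$.

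For the reference permutation $(i,j,k)=(1,2,3)$, I would expand each of the three terms of $\DJac(v_1,v_2,v_3)$ using \eqref{Eq:CL3a} and decompose in the basis $\{v_{\pi(1)}\otimes v_{\pi(2)}\otimes v_{\pi(3)}\mid \pi\in S_3\}$, then subtract the target $-v_2\otimes_1\dgal{v_1,v_3}=-\tilde\alpha_2\,v_1\otimes v_2\otimes v_3+\tilde\beta_2\,v_3\otimes v_2\otimes v_1$. A short bookkeeping reveals that the coefficients of $v_2\otimes v_1\otimes v_3$, $v_1\otimes v_3\otimes v_2$ and $v_3\otimes v_2\otimes v_1$ all match identically, while the coefficient of $v_1\otimes v_2\otimes v_3$ yields exactly \eqref{Eq:talph} and the coefficient of $v_3\otimes v_1\otimes v_2$ yields exactly \eqref{Eq:tbeta}.

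The anticipated main obstacle is to show that the five remaining permutations of $(1,2,3)$ impose no further constraint. The direct route is to repeat the above tensor expansion for each ordering, and to verify coefficient by coefficient that the relations obtained reduce to \eqref{Eq:talph}--\eqref{Eq:tbeta} once the $\{0,1\}$-valuedness of the parameters is invoked; a sample computation with $(i,j,k)=(3,2,1)$ indeed recovers \eqref{Eq:tbeta} from the coefficient of $v_1\otimes v_3\otimes v_2$, which suggests the pattern holds across all six orderings. A more conceptual shortcut would be to use \eqref{Eq:DJac-DerNot} together with the mixed skew-symmetry \eqref{Eq:intr1} to express $\DJac(v_{\sigma(1)},v_{\sigma(2)},v_{\sigma(3)})$ for general $\sigma\in S_3$ in terms of $\DJac(v_1,v_2,v_3)$ modulo contributions whose vanishing is already controlled by \eqref{Eq:intr1}; all six Poisson identities then reduce to the single one verified in the previous step, and the proposition follows by Theorem \ref{Thm:MAIN}.
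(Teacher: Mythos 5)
Your proposal is correct and essentially reproduces the paper's proof: the triples involving at most two distinct generators are handled by case (2) of Proposition \ref{Pr:CL1}, and the distinct-index case is settled by the same tensor-basis expansion of $\DJac(v_1,v_2,v_3)$ matched against $-v_2\otimes_1\dgal{v_1,v_3}$ (yielding exactly \eqref{Eq:talph} and \eqref{Eq:tbeta}), with the remaining five orderings treated as analogous, which the paper also does and which its remark on the permutation-invariance of \eqref{Eq:talph}--\eqref{Eq:tbeta} justifies. Only your optional ``conceptual shortcut'' via \eqref{Eq:DJac-DerNot} is doubtful, since that identity governs products in the first argument rather than permutations of the three arguments, but your primary route does not rely on it.
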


\begin{rem}
As we should expect from the second observation made after Definition \ref{Def:MixDA}, the conditions \eqref{Eq:talph}--\eqref{Eq:tbeta} are invariant under permutations of the $3$ generators $v_1,v_2$ and $v_3$.
Indeed, swapping $v_1$ with $v_2$ or $v_2$ with $v_3$ (which generate any permutation) amounts to changing constants according to
\begin{align*}
 (\tilde\alpha_1,\tilde\alpha_2,\tilde\alpha_3) \ &\mapsto \ (\tilde\alpha_2,\tilde\alpha_1,1-\tilde\alpha_3) \quad \text{ under }
 v_1 \leftrightarrow v_2, \\
 (\tilde\alpha_1,\tilde\alpha_2,\tilde\alpha_3)\ &\mapsto \ (1-\tilde\alpha_1,\tilde\alpha_3,\tilde\alpha_2) \quad \text{ under }
 v_2 \leftrightarrow v_3;
\end{align*}
these are transformations preserving \eqref{Eq:talph}. (The same holds for \eqref{Eq:tbeta} if one uses $\tilde\beta_i$ in place of $\tilde\alpha_i$).
In particular, the triples $(\tilde\alpha_1,\tilde\alpha_2,\tilde\alpha_3)$ and $(\tilde\beta_1,\tilde\beta_2,\tilde\beta_3)$ satisfying \eqref{Eq:talph}--\eqref{Eq:tbeta} can be given explicitly as in \eqref{Eq:TripCL3a}.
\end{rem}

\begin{proof}[Proof of Proposition~\ref{Pr:CL3a}]
By the second case in Proposition \ref{Pr:CL1}, \eqref{Eq:DJac-v} is satisfied when $i,j,k$ range over a subset of $2$ indices provided that \eqref{Eq:CL3a} holds for any $\tilde\alpha_i,\tilde\beta_i \in \{0,1\}$.
It remains to check that \eqref{Eq:DJac-v} (with all $\lambda_{i}=1$) holds under the conditions \eqref{Eq:talph}--\eqref{Eq:tbeta} when the $3$ indices are distinct; there are $6$ cases.

Let us compute \eqref{Eq:DJac-v} for $(i,j,k)=(1,2,3)$, the other cases being completely analogous.
To write the left-hand side, we use \eqref{Eq:dbr3} and \eqref{Eq:CL3a}:
\begin{align*}
 \dgal{v_1,\dgal{v_2,v_3}}_L=& \
\tilde{\alpha}_1 \tilde{\alpha}_3 \ v_1 \otimes v_2 \otimes v_3
-\tilde{\alpha}_1 \tilde{\beta}_3 \ v_2 \otimes v_1 \otimes v_3 \\
&-\tilde{\alpha}_2 \tilde{\beta}_1 \ v_1 \otimes v_3 \otimes v_2
+ \tilde{\beta}_1 \tilde{\beta}_2 \ v_3 \otimes v_1 \otimes v_2 \,, \\
- \dgal{v_2,\dgal{v_1,v_3}}_R=&
-\tilde{\alpha}_1 \tilde{\alpha}_2 \ v_1 \otimes v_2 \otimes v_3
+\tilde{\alpha}_2 \tilde{\beta}_1 \ v_1 \otimes v_3 \otimes v_2 \\
&+\tilde{\beta}_2 (-1+\tilde{\beta}_3) \ v_3 \otimes v_1 \otimes v_2
+\tilde{\beta}_2 (1-\tilde{\alpha}_3) \ v_3 \otimes v_2 \otimes v_1 \,,  \\
- \dgal{\dgal{v_1,v_2},v_3}_L=& \
- \tilde{\alpha}_2 \tilde{\alpha}_3 \ v_1 \otimes v_2 \otimes v_3
+ \tilde{\alpha}_3 \tilde{\beta}_2 \ v_3 \otimes v_2 \otimes v_1 \\
&+\tilde{\alpha}_1 \tilde{\beta}_3 \ v_2 \otimes v_1 \otimes v_3
- \tilde{\beta}_1 \tilde{\beta}_3 \ v_3 \otimes v_1 \otimes v_2\,,
\end{align*}
which yield
\begin{equation}
 \begin{aligned} \label{Eq:CL3a-pf1}
\DJac(v_1,v_2,v_3)=&
-(\tilde\alpha_1 \tilde\alpha_2 + \tilde\alpha_2 \tilde\alpha_3 - \tilde\alpha_1 \tilde\alpha_3) \ v_1 \otimes v_2 \otimes v_3
+ \tilde{\beta}_2 \ v_3 \otimes v_2 \otimes v_1 \\
&+(\tilde\beta_1 \tilde\beta_2 + \tilde\beta_2 \tilde\beta_3 - \tilde\beta_1 \tilde\beta_3 - \tilde\beta_2) \ v_3 \otimes v_1 \otimes v_2\,.
 \end{aligned}
\end{equation}
Meanwhile, the right-hand side reads:
\begin{equation} \label{Eq:CL3a-pf2}
 \begin{aligned}
-v_2 \otimes_1 \dgal{v_1,v_3}=&
-\tilde\alpha_2  \ v_1 \otimes v_2 \otimes v_3
+ \tilde{\beta}_2 \ v_3 \otimes v_2 \otimes v_1 \,.
 \end{aligned}
\end{equation}
The expressions \eqref{Eq:CL3a-pf1} and \eqref{Eq:CL3a-pf2} coincide precisely when \eqref{Eq:talph}--\eqref{Eq:tbeta} hold.
\end{proof}

\begin{cor} \label{Cor:CL3a}
 Under the conditions \eqref{Eq:talph}--\eqref{Eq:tbeta}, the operation \eqref{Eq:CL3a} defines:
\begin{enumerate}
\item a $1$-double Lie algebra structure on $V:=\K v_1 \oplus \K v_2 \oplus \K v_3$;
 \item a modified double Poisson bracket on $A_3$.
\end{enumerate}
\end{cor}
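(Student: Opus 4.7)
The plan is to derive both parts directly from results already established in the paper, using Proposition \ref{Pr:CL3a} as the input. First, I would observe that Proposition \ref{Pr:CL3a} establishes that, under the constraints \eqref{Eq:talph}--\eqref{Eq:tbeta}, the operation \eqref{Eq:CL3a} defines a mixed double Poisson algebra of weight $(1,1,1)$ on $A_3 = \K\langle v_1, v_2, v_3\rangle$. This is the only nontrivial work, and it is already done.

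For part (2), the conclusion is immediate from Theorem \ref{Thm:MAIN}: any mixed double Poisson algebra of weight $\underline{\lambda}$ carries a modified double Poisson bracket, so the operation \eqref{Eq:CL3a} is a modified double Poisson bracket on $A_3$.

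For part (1), I would invoke Example \ref{Exmp:GG}. A quick inspection of \eqref{Eq:CL3a} shows that each $\dgal{v_i, v_j}$ lies in $V \otimes V$ (every term on the right-hand side is of the form $v_a \otimes v_b$ with no constants or longer monomials), so $\dgal{-,-}$ restricts to a map $V \otimes V \to V \otimes V$. Combined with the weight being homogeneous $(1,1,1)$, Example \ref{Exmp:GG} then guarantees that this restriction is a $1$-double Lie algebra structure on $V$ in the sense of Definition \ref{Def:GG}.

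No step presents a real obstacle, since the Jacobi-type verification was absorbed into Proposition \ref{Pr:CL3a}; the only mildly delicate point is the bookkeeping observation that \eqref{Eq:CL3a} takes values in $V \otimes V$, which one sees directly from the six defining formulas.
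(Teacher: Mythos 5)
Your proposal is correct and matches the paper's own argument: part (2) from Proposition \ref{Pr:CL3a} together with Theorem \ref{Thm:MAIN}, and part (1) from Proposition \ref{Pr:CL3a} combined with Example \ref{Exmp:GG} (equivalently Definition \ref{Def:GG}), noting that \eqref{Eq:CL3a} takes values in $V\otimes V$. Your explicit remark about the $V\otimes V$-valuedness is just a slightly more spelled-out version of what the paper leaves implicit.
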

\begin{proof}
 The first part follows from Definition~\ref{Def:GG} and Proposition~\ref{Pr:CL3a} (cf. Example~\ref{Exmp:GG}).

The second part follows  from Proposition~\ref{Pr:CL3a} and Theorem~\ref{Thm:MAIN}.
\end{proof}

\subsubsection*{$d=3$, weight $(1,1,-1)$}

\begin{prop}  \label{Pr:CL3b}
 Let $(A_3,\dgal{-,-})$ be a mixed double algebra of weight $(1,1,-1)$ such that \eqref{Eq:CL3} holds for any $1\leq i<j\leq 3$.
Then it is Poisson when it is given by \eqref{Eq:CL3b}
for $\alpha_1, \alpha_2,\tilde\alpha_3,\beta_1, \beta_2,\tilde\beta_3 \in \{0,1\}$ subject to the following $2$ conditions:
\begin{subequations}
 \begin{align}
  \alpha_1 \alpha_2 + \alpha_2 \tilde\beta_3 - \alpha_1 \tilde\beta_3 - \alpha_2 &=0\,, \label{Eq:t3b1} \\
  \beta_1 \beta_2 + \beta_2 \tilde\alpha_3 - \beta_1 \tilde\alpha_3 - \beta_2 &=0\,. \label{Eq:t3b2}
 \end{align}
\end{subequations}
\end{prop}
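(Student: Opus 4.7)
The plan is to mirror the proof of Proposition~\ref{Pr:CL3a}. By Proposition~\ref{Pr:CL1}, the Poisson condition \eqref{Eq:DJac-v} is automatic on any triple $(v_i,v_j,v_k)$ involving only two distinct indices in $\{1,2,3\}$: indeed, the restriction of the operation \eqref{Eq:CL3b} to each pair of generators falls into one of the Poisson families classified in Proposition~\ref{Pr:CL1}, namely case~(2) with $\lambda=1$ for the equal-weight pair $(v_1,v_2)$ and case~(1) with $\lambda=1$ for the mixed-weight pairs $(v_1,v_3)$ and $(v_2,v_3)$. Consequently, the only remaining cases are the six orderings of the triple of distinct indices $(1,2,3)$.

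For each of these six orderings, the right-hand side of \eqref{Eq:DJac-v} takes one of two shapes, depending on the values of $\lambda_i+\lambda_j$ and $\lambda_i-\lambda_j$. When $(i,j)\in\{(1,2),(2,1)\}$, one has $\lambda_i+\lambda_j=2$ and $\lambda_i-\lambda_j=0$, so the target simplifies to $-v_j\otimes_1\dgal{v_i,v_k}$. In the remaining four orderings, $v_3$ occupies the first or second slot, giving $\lambda_i+\lambda_j=0$ and $\lambda_i-\lambda_j=\pm 2$, so the target becomes $\pm\,1\otimes_1(v_j\ast\dgal{v_i,v_k})$.

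The central step is a direct expansion of each $\DJac(v_i,v_j,v_k)$ by means of \eqref{Eq:DJac}, \eqref{Eq:dbr3} and the explicit formulas \eqref{Eq:CL3b}, followed by coefficient-by-coefficient comparison with the prescribed right-hand side. The ordering $(1,2,3)$ should be decisive: collecting terms in the basis of tensor monomials $v_a\otimes v_b\otimes v_c$, $1\otimes v_av_b\otimes v_c$ and $v_a\otimes v_bv_c\otimes 1$ will, after cancellations forced by \eqref{Eq:DJac-v}, yield precisely the two quadratic relations $\alpha_1\alpha_2+\alpha_2\tilde\beta_3-\alpha_1\tilde\beta_3-\alpha_2=0$ and $\beta_1\beta_2+\beta_2\tilde\alpha_3-\beta_1\tilde\alpha_3-\beta_2=0$, that is \eqref{Eq:t3b1}--\eqref{Eq:t3b2}. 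I expect the remaining five orderings to produce no new relations: the exchange $v_1\leftrightarrow v_2$ identifies $(1,2,3)$ with $(2,1,3)$ and swaps analogously the orderings involving $v_3$ in the first or second slot, while the involution $(-)^{\circ}$ together with the $\alpha\leftrightarrow\beta$ exchange in \eqref{Eq:CL3b} links $(i,j,3)$ with $(j,i,3)$ at the level of \eqref{Eq:DJac-v}.

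The main obstacle will be the bookkeeping of tensor monomials across the six orderings, made more delicate here than in Proposition~\ref{Pr:CL3a} by the coexistence of purely $V^{\otimes 2}$-valued contributions coming from $\dgal{v_1,v_2}$ and the boundary-type contributions of the form $1\otimes v_av_b$ or $v_av_b\otimes 1$ coming from the mixed-weight brackets $\dgal{v_i,v_3}$; these have to be matched carefully after applying the Leibniz rules \eqref{Eq:Lei} inside $\dgal{v_i,\dgal{v_j,v_k}}$ and $\dgal{\dgal{v_i,v_j},v_k}$. Once the decisive case $(1,2,3)$ is worked out, however, Corollary~\ref{Cor:skew} together with Proposition~\ref{Pr:Jac} yields that \eqref{Eq:CL3b} defines a modified double Poisson bracket on $A_3$ whenever \eqref{Eq:t3b1}--\eqref{Eq:t3b2} hold, completing the proof.
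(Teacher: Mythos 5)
Your overall strategy is the same as the paper's: reduce all triples $(v_i,v_j,v_k)$ with at most two distinct indices to Proposition~\ref{Pr:CL1} (your identification of case~(2) with $\lambda=1$ for the pair $(v_1,v_2)$ and case~(1) with $\lambda=1$ for the pairs $(v_1,v_3)$, $(v_2,v_3)$ is correct), and then verify \eqref{Eq:DJac-v} for the six orderings of three distinct indices by direct expansion. Your prediction about $(1,2,3)$ is also accurate: expanding $\DJac(v_1,v_2,v_3)$ and comparing with $-v_2\otimes_1\dgal{v_1,v_3}$ does produce exactly \eqref{Eq:t3b1} (from the coefficient of $1\otimes v_1\otimes v_2v_3$) and \eqref{Eq:t3b2} (from the coefficient of $v_3v_1\otimes v_2\otimes 1$).

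The genuine gap is in your claim that the remaining five orderings follow by symmetry, so that working out $(1,2,3)$ alone completes the proof. The exchange $v_1\leftrightarrow v_2$ is a legitimate symmetry (it preserves the weight $(1,1,-1)$, maps the family \eqref{Eq:CL3b} to itself, and leaves \eqref{Eq:t3b1}--\eqref{Eq:t3b2} invariant), but it only pairs $(1,2,3)$ with $(2,1,3)$, $(1,3,2)$ with $(2,3,1)$, and $(3,1,2)$ with $(3,2,1)$. It does not relate $(1,2,3)$ to the orderings with $v_3$ in the first or second argument, and no permutation moving $v_3$ preserves the weight. Your proposed link ``$(-)^{\circ}$ together with the $\alpha\leftrightarrow\beta$ exchange links $(i,j,3)$ with $(j,i,3)$'' only concerns a swap of the first two arguments, which is already covered, and in the absence of cyclic skew-symmetry \eqref{Eq:cskew} the double Jacobiator \eqref{Eq:DJac} enjoys no permutation equivariance that would transport the identity from $(1,2,3)$ to $(1,3,2)$ or $(3,1,2)$; any such transport would need correction terms in the spirit of \eqref{Eq:DJac-DerNot} or \eqref{Eq:Prep1}, which you do not derive. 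These two remaining classes must therefore be checked by separate direct computations (they have the structurally different right-hand side $\pm\,1\otimes_1(v_j\ast\dgal{v_i,v_k})$ that you yourself point out), and it is precisely such a case, $(i,j,k)=(1,3,2)$, that the paper chooses to display; it then leaves the other orderings as analogous computations rather than invoking a symmetry. A minor further point: the proposition only asserts the Poisson property, so the appeal to Corollary~\ref{Cor:skew} and Proposition~\ref{Pr:Jac} at the end belongs to Corollary~\ref{Cor:CL3b}, not to the proof of the statement itself.
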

\begin{rem}
Conditions \eqref{Eq:t3b1}--\eqref{Eq:t3b2} are invariant under the permutation $v_1\leftrightarrow v_2$ which preserves the weight $(1,1,-1)$.
 In particular, we deduce from \eqref{Eq:t3b1}--\eqref{Eq:t3b2} that the triples $(\alpha_1,\alpha_2,\tilde \beta_3)$ and $(\beta_1,\beta_2,\tilde \alpha_3)$ can only take the $6$ values collected in \eqref{Eq:TripCL3a}.
\end{rem}

\begin{proof}[Proof of Proposition~\ref{Pr:CL3b}]
This is an explicit computation similar to Proposition \ref{Pr:CL3a}.
For the reader's convenience, let us nevertheless check \eqref{Eq:DJac-v} for $(i,j,k)=(1,3,2)$
(recall the weight $(1,1,-1)$).
To write the left-hand side, we use \eqref{Eq:dbr3}, \eqref{Eq:Lei} and \eqref{Eq:CL3b}:
\begin{align*}
 \dgal{v_1,\dgal{v_3,v_2}}_L=& \
(1-\alpha_1) \tilde{\alpha}_3 \ v_1 \otimes v_2 v_3 \otimes 1
-(1-\alpha_1) \tilde{\beta}_3 \ v_2 \otimes v_1 v_3 \otimes 1 \\
&+(1-\alpha_1) \alpha_2 \ v_2 \otimes v_1 v_3 \otimes 1
- (1-\alpha_1) \beta_2 \ v_2 v_3v_1 \otimes 1 \otimes 1 \,, \\
- \dgal{v_3,\dgal{v_1,v_2}}_R=&
-\tilde{\alpha}_3 (-1+\beta_1) \ v_1 \otimes 1 \otimes v_3 v_2
-\tilde{\alpha}_3 (1-\alpha_1) \ v_1 \otimes v_2 v_3 \otimes 1 \\
&+\tilde{\beta}_3 (-1+\beta_2) \ v_2 \otimes 1 \otimes v_3 v_1
+\tilde{\beta}_3 (1-\alpha_2) \ v_2 \otimes v_1 v_3 \otimes 1 \,,  \\
- \dgal{\dgal{v_1,v_2},v_3}_L=& \
\beta_2 \tilde{\alpha}_3 \ v_1 \otimes 1 \otimes v_3 v_2
- \beta_2 \tilde{\beta}_3 \ v_2 \otimes 1 \otimes v_3 v_1 \\
&+\beta_2 (-1+\beta_1) \ v_1 \otimes 1 \otimes v_3 v_2
+ \beta_2 (1-\alpha_1)\ v_2 v_3 v_1 \otimes 1 \otimes 1\,,
\end{align*}
which yield
\begin{equation}
 \begin{aligned} \label{Eq:CL3b-pf1}
\DJac(v_1,v_2,v_3)=&
-(\alpha_1 \alpha_2 + \alpha_2 \tilde\beta_3 - \alpha_1 \tilde\beta_3 - \alpha_2) \ v_2 \otimes v_1 v_3 \otimes 1
- \tilde{\beta}_3 \ v_2 \otimes 1 \otimes v_3 v_1 \\
&+(\beta_1 \beta_2 + \beta_2 \tilde\alpha_3 - \beta_1 \tilde\alpha_3 - \beta_2 + \tilde \alpha_3) \ v_1 \otimes 1 \otimes v_3 v_2\,.
 \end{aligned}
\end{equation}
Meanwhile, the right-hand side reads:
\begin{equation} \label{Eq:CL3b-pf2}
 \begin{aligned}
1 \otimes_1 (v_3 \ast \dgal{v_1,v_2})=&
\tilde \alpha_3 \ v_1 \otimes 1 \otimes v_3 v_2
- \tilde \beta_3 \ v_2 \otimes 1 \otimes v_3 v_1 \,.
 \end{aligned}
\end{equation}
The expressions \eqref{Eq:CL3b-pf1} and \eqref{Eq:CL3b-pf2} coincide precisely when \eqref{Eq:t3b1}--\eqref{Eq:t3b2} hold.
\end{proof}

\begin{cor} \label{Cor:CL3b}
 Under the conditions \eqref{Eq:t3b1}--\eqref{Eq:t3b2}, the operation \eqref{Eq:CL3b} defines a modified double Poisson bracket on $A_3$.
\end{cor}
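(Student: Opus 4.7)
The proof is essentially a one-line citation, so the plan is very short. The strategy is to observe that Corollary \ref{Cor:CL3b} is a direct composition of two previously established results, and the real work has already been done elsewhere in the manuscript.

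The first step is to invoke Proposition \ref{Pr:CL3b}: under the conditions \eqref{Eq:t3b1}--\eqref{Eq:t3b2}, the operation $\dgal{-,-}$ defined by \eqref{Eq:CL3b} on $A_3=\K\langle v_1,v_2,v_3\rangle$ is a mixed double Poisson algebra of weight $(1,1,-1)$. This delivers both the skew-symmetry condition \eqref{Eq:intr1} (which is built into the definition \eqref{Eq:CL3b} of the bracket) and the Poisson property \eqref{Eq:DJac-v} on generators (which is what Proposition \ref{Pr:CL3b} explicitly verifies).

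The second step is to apply Theorem \ref{Thm:MAIN}, which upgrades any mixed double Poisson algebra of some weight $\underline{\lambda}\in \K^d$ to a modified double Poisson bracket by showing that the Leibniz rules, $H_0$-skew-symmetry (via Corollary \ref{Cor:skew}) and the Jacobi identity (via Proposition \ref{Pr:Jac}) all follow. Combining these two ingredients yields Corollary \ref{Cor:CL3b} immediately.

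There is no genuine obstacle here; the content of the corollary lies entirely in Proposition \ref{Pr:CL3b} (the explicit verification of \eqref{Eq:DJac-v} on the six distinct-index triples) and in Theorem \ref{Thm:MAIN} (the propagation from generators to all of $A_3$). The corollary itself is a mere assembly of these results, entirely parallel to the second part of Corollary \ref{Cor:CL3a}.
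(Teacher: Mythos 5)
Your proposal is correct and matches the paper's (implicit) argument exactly: the corollary follows by combining Proposition \ref{Pr:CL3b} with Theorem \ref{Thm:MAIN}, just as in the analogous Corollary \ref{Cor:CL3a}. Nothing further is needed.
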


Combining Propositions \ref{Pr:CL3a} and \ref{Pr:CL3b}, we can construct many new modified double Poisson brackets.
This is how we found Propositions \ref{Pr:CLd} and \ref{Pr:CLd-2}, which are proved now.

\subsubsection{Proof of Proposition \ref{Pr:CLd}} \label{ss:PfCLd}

By checking \eqref{Eq:intr1} for any $1\leq i \leq j \leq d$, it is clear that \eqref{Eq:CL4-1} defines a mixed double algebra of weight $\mathbf{1}_{\delta,d}$. Hence it remains to verify \eqref{Eq:DJac-v}.

Pick $1\leq a<b<c\leq d$.
If $c\leq \delta$, the mixed double brackets involving $v_a,v_b,v_c$ correspond to taking all constants equal to $+1$ in \eqref{Eq:CL3a} with $(v_a,v_b,v_c)$ relabelled as $(v_1,v_2,v_3)$. Thus \eqref{Eq:DJac-v} is satisfied by Proposition \ref{Pr:CL3a} whenever $v_i,v_j,v_k \in \{v_a,v_b,v_c\}$.

Similarly, if $b\leq \delta$ and $c>\delta$, the mixed double brackets involving $v_a,v_b,v_c$ are of weight $(1,1,-1)$ and correspond to taking all constants equal to $+1$ in \eqref{Eq:CL3b} (with $(v_a,v_b,v_c)$ relabelled as $(v_1,v_2,v_3)$), hence \eqref{Eq:DJac-v} is satisfied on these generators by Proposition \ref{Pr:CL3b}.

If $a\leq \delta$ and $b>\delta$, the mixed double brackets involving $v_a,v_b,v_c$ are of weight $(1,-1,-1)$.
Up to multiplying $\dgal{-,-}$ by $-1$, the weight is $(-1,1,1)$ and they correspond to
\begin{equation*}
 (\alpha_1,\alpha_2,\tilde \alpha_3)=(0,0,1), \quad (\beta_1,\beta_2,\tilde \beta_3)=(0,0,1),
\end{equation*}
in \eqref{Eq:CL3b} after relabelling $(v_a,v_b,v_c)$ as $(v_3,v_1,v_2)$. Hence \eqref{Eq:DJac-v} is satisfied on these generators by Proposition \ref{Pr:CL3b}.

Finally, if $a>\delta$, the mixed double brackets involving $v_a,v_b$ and $v_c$ are of weight $(-1,-1,-1)$.
Up to multiplying $\dgal{-,-}$ by $-1$, these are of weight $(1,1,1)$ and correspond to taking all constants equal to $+1$ in \eqref{Eq:CL3a} (with $(v_a,v_b,v_c)$ relabelled as $(v_1,v_2,v_3)$), hence \eqref{Eq:DJac-v} is satisfied on these generators by Proposition \ref{Pr:CL3a}.

The last part follows from Theorem \ref{Thm:MAIN}. \qed

\subsubsection{Proof of Proposition \ref{Pr:CLd-2}}  \label{ss:PfCLd-2}

This is similar to the proof of Proposition \ref{Pr:CLd}. The only changes are as follows:
\begin{itemize}
 \item If $c\leq \delta$, we use the constants $(\tilde \alpha_1,\tilde \alpha_2,\tilde \alpha_3)=(1,1,1)$ and
 $(\tilde \beta_1,\tilde \beta_2,\tilde \beta_3)=(0,0,0)$. If $a>\delta$, we take the same constants.
 \item If $b\leq \delta$ and $c>\delta$, we use the following constants $(\alpha_1,\alpha_2,\tilde \beta_3)=(0,0,0)$ and
 $(\beta_1,\beta_2,\tilde \alpha_3)=(1,1,1)$.  If $a\leq \delta$ and $b>\delta$, we use the same constants (recalling that we need a different relabelling in that case). \qed
\end{itemize}

\subsection{Proof of the first instance of Arthamonov's conjecture (Theorem \ref{Thm:GGconj})}  \label{ss:ArtConj}

If we look at the operation $\dgal{-,-}^{I\!I}$ defined in \eqref{Eq:MDBII} on $\K\langle x_1,x_2,x_3 \rangle$, we see that
 \begin{align*}
&\dgal{x_1,x_2}^{I\!I}+(\dgal{x_2,x_1}^{I\!I})^\circ=-(x_1 \otimes x_2- x_2 \otimes x_1), \\
&\dgal{x_2,x_3}^{I\!I}+(\dgal{x_3,x_2}^{I\!I})^\circ=-(x_2 \otimes x_3 - x_3\otimes x_2), \\
&\dgal{x_3,x_1}^{I\!I}+(\dgal{x_1,x_3}^{I\!I})^\circ= - (x_3 \otimes x_1-x_1 \otimes x_3).
 \end{align*}
 This defines a mixed double algebra of weight $(-1,-1,-1)$.

 Up to multiplying $\dgal{-,-}^{I\!I}$ by $-1$ and setting $v_i:=x_i$, we get a mixed double algebra structure of weight $(1,1,1)$ that reads:
\begin{equation} \label{Eq:MDBIIbis}
 \begin{aligned}
&\dgal{v_1,v_2}=v_1 \otimes v_2,  \quad &&\dgal{v_2,v_1}=-v_1 \otimes v_2, \\
&\dgal{v_2,v_3}=-v_3\otimes v_2,  \quad &&\dgal{v_3,v_2}=v_3 \otimes v_2, \\
&\dgal{v_3,v_1}=-v_1 \otimes v_3 + v_3 \otimes v_1,&&
 \end{aligned}
\end{equation}
where zero brackets are omitted. If we take in \eqref{Eq:CL3a} the constants
\begin{equation*}
 (\tilde\alpha_1,\tilde\alpha_2,\tilde\alpha_3)= (0,0,1), \quad
 (\tilde\beta_1,\tilde\beta_2,\tilde\beta_3) = (1,0,0),
\end{equation*}
we reproduce \eqref{Eq:MDBIIbis}. By Proposition~\ref{Pr:CL3a}, this is a mixed double Poisson algebra, hence $\dgal{-,-}^{I\!I}$ is a modified double Poisson bracket by Corollary \ref{Cor:CL3a}.


\subsection*{Acknowledgments}
I am indebted to V.~Gubarev and S.~Arthamonov for useful correspondence, and to the referee for numerous suggestions.
This work was completed at IMB which is supported by the EIPHI Graduate School (contract ANR-17-EURE-0002).


\EditInfo{June 18, 2024}{September 24, 2024}{David Towers and Ivan Kaygorodov}

\end{document}